\documentclass{amsart}
\oddsidemargin 0mm
\evensidemargin 0mm
\topmargin 0mm
\textwidth 160mm
\textheight 230mm   
\tolerance=9999
\usepackage{amssymb,amstext,amsmath,amscd,amsthm,amsfonts,enumerate,graphicx,latexsym,stmaryrd,multicol}
    
\usepackage[usenames]{color}
\usepackage[all]{xy}
\usepackage[colorlinks]{hyperref}
\usepackage{tikz-cd}  
\usepackage{tikz} 
\usepackage{hyperref}


\newtheorem{thm}{Theorem}[section]
\newtheorem{lem}[thm]{Lemma}
\newtheorem{prop}[thm]{Proposition}
\newtheorem{cor}[thm]{Corollary}
\theoremstyle{definition}
\newtheorem{dfn}[thm]{Definition}
\newtheorem*{ac}{Acknowledgments}  
\newtheorem{ques}[thm]{Question}
\newtheorem{rem}[thm]{Remark}

\newtheorem{chunk}[thm]{}
\theoremstyle{remark}


\numberwithin{equation}{thm}

\def\ann{\operatorname{ann}}
\def\Min{\operatorname{Min}}
\def\ipd{\operatorname{IPD}}
\def\iid{\operatorname{IID}}
\def\igd{\operatorname{IGD}}
\def\gdim{\operatorname{G-dim}}

\def\depth{\operatorname{depth}}

\def\End{\operatorname{End}}
\def\Ext{\operatorname{Ext}}

\def\syz{\mathsf{\Omega}}

\def\H{\mathrm{H}}
\def\Hom{\operatorname{Hom}}
\def\id{\mathrm{id}}

\def\m{\mathfrak{m}}
\def\Max{\operatorname{Max}}

\def\ng{\operatorname{NonGor}}
\def\Ass{\operatorname{Ass}}  
\def \coker {\operatorname{coker}}
\def \H {\operatorname{H}}
\def\p{\mathfrak{p}}

\def\sing{\operatorname{Sing}}

\def\spec{\operatorname{Spec}}

\def\supp{\operatorname{Supp}}
\def\syz{\mathrm{\Omega}}

\def\Tor{\operatorname{Tor}}
\def\reg{\operatorname{Reg}}

\def\Tr{\operatorname{Tr}}
\def\tr{\operatorname{tr}}   

\def\V{\operatorname{V}}

\def \c{\mathfrak c} 
\def \pd{\operatorname{pd}}
\def\id{\operatorname{id}}  
\def \I {\operatorname{I}}  
\def \cl {\operatorname{cl}}

\def\ord{\operatorname{ord}}
\def \im {\operatorname{Im}}
\def \grade{\operatorname{grade}}  
\def \Im {\operatorname{Im}}  
\def \Ker {\operatorname{Ker}} 
  
\begin{document}
\title{On Containment of trace ideals in ideals of finite homological dimension}
\author{Souvik Dey}
\address{S. Dey, Department of Algebra, Charles University, Faculty of Mathematics and Physics, Sokolovska
83, 186 75, Praha, Czech Republic}
\email{ souvik.dey@matfyz.cuni.cz} 
\urladdr{\url{https://orcid.org/0000-0001-8265-3301}}  

\author{Monalisa Dutta}
\address{M. Dutta, Department of Mathematics, University of Kansas, KS 66045-7523, Lawrence, USA} \email{ m819d903@ku.edu} 

\subjclass[2020]{13C13, 13C14, 13D02, 13D05, 13D07}
\keywords{Trace ideal, Closure operation, big Cohen--Macaulay module, projective dimension, injective dimension, singular locus.}  

\thanks{Souvik Dey was partly supported by the Charles University Research Center program No.UNCE/24/SCI/022 and a grant GA \v{C}R 23-05148S from the Czech Science Foundation.}  

\maketitle

\begin{abstract} 
Motivated by recent result of P\'erez and R.G. on equality of test ideal of module closure operation and trace ideal, and the well-known result by Smith that parameter test ideal cannot be contained in parameter ideals, we study the obstruction of containment of trace ideals in ideals of finite projective (or injective) dimension. One of our results says that the trace ideal of any big Cohen--Macaulay module over a Gorenstein complete local domain cannot be contained in any ideal of finite projective dimension, thereby  generalizing Smith's result in this case. As consequences of our results, we give upper bounds on $\m$-adic order of trace ideals of certain modules over local Cohen--Macaulay rings. We also prove analogous results for ideal of entries of maps in a free resolution of  modules.
\end{abstract}

\section{Introduction}

This work is motivated by two results, one by K. E. Smith (see \cite[Proposition 6.1]{smith}) stating that the parameter test ideal for tight closure cannot contained in any ideal generated by any system of parameters, and another is by F. P\'erez and R. R.G. (see \cite[Theorem 3.12]{pr}) which says that the test ideal associated to a module closure operation is a trace ideal. To illustrate  these  in more detail, let us recall some definitions.

Throughout, $R$ will denote a commutative Noetherian ring.  

\begin{dfn}(see \cite[Definition 2.1]{pr}) 
 A closure operation $\operatorname{cl}$ on a ring $R$ is a map, which to each pair of modules $N \subseteq M$ assigns a submodule $N^{\cl}_M$ of $M$ satisfying the following properties:
\begin{enumerate}
\item $N \subseteq N^{\cl}_M$,
\item $(N^{\cl}_M)^{\cl}_M = N^{\cl}_M$, and
\item $N^{\cl}_M\subseteq N'^{\cl}_M$ for $R$-modules $N \subseteq N' \subseteq M$
\end{enumerate}
\end{dfn}


One very important example of a closure operation is tight closure for modules over rings of prime characteristic. This is usually denoted by $N^*_M$ for a pair of modules $N\subseteq M$ and is defined as follows:

\begin{dfn}(see \cite{hoclec})
Let $R$ be a ring of characteristic $p > 0$. The Frobenius endomorphism is the map $F : R \to R$ sending $r \mapsto r^p$. The $e$-th iteration of
$F$, denoted by $F^e$, sends $r \mapsto r^{p^e}.$ Let $F^e_*(R)$ denote the abelian group $R$, viewed as an $R$-module via the $e$-th iteration of Frobenius endomorphism. Let $N \subseteq M$ be $R$-modules. Then the tight closure of $N$ in $M$ is denoted by $N^*_M$ and defined as follows: $u \in N^*_M$ if there is some $c \in R$, not in any  minimal prime of $R$, such that $c \otimes u \in \operatorname{Im}(F^e_*(R)\otimes_R N\to F^e_*(R)\otimes_R M)$ for all $e\gg0$.  
\end{dfn}

 Another important closure operation is the module closure associated to an $R$-module $B$ defined as follows:

\begin{dfn}(see \cite[Definition  2.4]{pr}) Let $R$ be a ring. Given an $R$-module $B$ (not necessarily finitely-generated), the module closure operation $\cl_B$ on $R$ is defined as follows: $$u\in N^{\cl_B}_M\text{ if for all }b\in B,\text{ }b\otimes u\in \operatorname{Im}(B \otimes N \to B \otimes M)$$
for any pair of $R$-modules $N \subseteq M$ and $u \in M$.
\end{dfn}

The way these two closure operations are related is via the following result  by M. Hochster:
\begin{thm}(\cite[Theorem 11.1]{hoc}) 
Let $R$ be a complete local domain of characteristic
$p > 0$, and let $N \subseteq M$ be finitely-generated $R$-modules. Then $N^*_M$, the tight closure of $N$ in $M$, is equal to the set of elements $u \in M$ that are in $N^{\cl_B}_M$ for some big Cohen-Macaulay algebra $B$.
\end{thm}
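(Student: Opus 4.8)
The statement is Hochster's characterization of tight closure via big Cohen--Macaulay algebras; I would take as the single external input the Hochster--Huneke theorem that $R$ admits a big Cohen--Macaulay algebra in characteristic $p$ (concretely, that the absolute integral closure $R^{+}$ is a balanced big Cohen--Macaulay $R$-algebra), and prove the two inclusions separately.

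For the inclusion $N^{\cl_B}_M\subseteq N^*_M$, where $B$ is an arbitrary big Cohen--Macaulay $R$-algebra: this is the more formal direction. Since a system of parameters $\underline{x}$ of $R$ is a regular sequence on $B$ and $\m B\neq B$, we have $\H^{\dim R}_{\m}(B)\neq 0$, so $B$ is a solid $R$-algebra, i.e. $\Hom_R(B,R)\neq 0$ (a theorem of Hochster for $R$ a complete local domain); fix a nonzero $\varphi\colon B\to R$ and $0\neq c\in\varphi(B)$. For $u\in N^{\cl_B}_M$ we have in particular $1\otimes u\in\im(B\otimes_R N\to B\otimes_R M)$. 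Applying the Frobenius functor of $R$ and using the intrinsic Frobenius of the characteristic $p$ ring $B$, this membership propagates through all Frobenius powers; pushing forward along $\varphi$ then places $c\otimes u$ in $\im(F^{e}_{*}(R)\otimes_R N\to F^{e}_{*}(R)\otimes_R M)$ for every $e$, which is the assertion $u\in N^*_M$. (Equivalently: $\cl_B$ is contained in solid closure because $B$ is solid, and over a complete local domain of characteristic $p$ solid closure agrees with tight closure.)

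For the substantial inclusion $N^*_M\subseteq\bigcup_B N^{\cl_B}_M$, I would argue one element at a time. Fix $u\in N^*_M$ with a witness $0\neq c$. First reduction: it suffices to produce a \emph{single} big Cohen--Macaulay $R$-algebra $B$ with $1\otimes u\in\im(B\otimes_R N\to B\otimes_R M)$, because once $u$ itself lies in that image so does $b\otimes u$ for every $b\in B$ (multiply the expression by $b$), and then the defining condition of $N^{\cl_B}_M$ holds automatically. To build such a $B$: write $M$ as a quotient of a finite free module, lift $N$ and $u$, and form the \emph{forcing algebra} $B_{0}$ over $R$ that universally adjoins an expression of the lift of $u$ as coming from $N$ (a polynomial extension of $R$ modulo the corresponding linear equations). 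Then $u\in\im(B_{0}\otimes_R N\to B_{0}\otimes_R M)$ by construction, and this persists along any $R$-algebra map out of $B_{0}$; so it remains to run the Hochster--Huneke algebra-modification process over $B_{0}$ and reach a big Cohen--Macaulay $R$-algebra receiving a map from $B_{0}$.

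The crux --- and the step I expect to be the main obstacle --- is exactly that this modification process does not collapse, i.e. never forces a unit into the maximal ideal; this is the only place where $u\in N^*_M$ is used. The obstructions to non-collapse are certain relations on partial systems of parameters produced at the successive stages of the modification, and the point is that, because $u\in N^*_M$ with witness $c$, all of these relations are \emph{phantom} (annihilated, after applying Frobenius, by a single nonzero element ultimately traceable to $c$); a phantom relation can be resolved using a big Cohen--Macaulay algebra $R$ is already known to possess, without collapse. Heuristically, taking $p^{e}$-th roots of the Frobenius relations inside $R^{+}$ realizes $u$ in $\im(R^{+}\otimes_R N\to R^{+}\otimes_R M)$ ``up to the factor $c^{1/p^{e}}$'', whose order at the closed point tends to $0$, and $R^{+}$ together with its weak functoriality is marshalled to dominate the bad relations coherently. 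Assembling this non-collapse argument is the real content; granting it, the module-theoretic reductions, the forcing-algebra bookkeeping, and the direction $N^{\cl_B}_M\subseteq N^*_M$ above are comparatively routine.
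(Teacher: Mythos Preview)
The paper does not prove this statement at all: it is quoted verbatim from \cite[Theorem~11.1]{hoc} as background in the introduction, with no argument given. There is therefore nothing in the paper to compare your sketch against.

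That said, your outline is a reasonable summary of Hochster's original argument in \cite{hoc}: the easy direction goes through solidity of a big Cohen--Macaulay algebra over a complete local domain (so $\cl_B\subseteq$ solid closure $=$ tight closure in characteristic $p$), and the hard direction builds a forcing algebra for the element $u$ and then runs the algebra-modification machine, using $u\in N^*_M$ to show the relations encountered are phantom so that the process does not collapse. Your identification of the non-collapse step as the real content is accurate. Just be aware that for the purposes of the present paper this theorem is purely a citation and you are not expected to reproduce its proof.
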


It also follows from the works of R. R.G., and G. Dietz that for any complete local domain of prime characteristic, there exists a big Cohen-Macaulay module such that the tight closure on finitely generated modules is contained in the closure operation induced by that big Cohen--Macaulay module

\begin{thm}(\cite[Example 5.4, Lemma 5.5, Proposition 5.6]{diet}, \cite[Definition 2.9, Theorem 5.1]{rrg})\label{bigd} Let $(R,\m)$ be a complete local domain of positive characteristic. Then, there exists a big Cohen--Macaulay $R$-module $B$ such that $N_M^*\subseteq N_M^{\cl_B}$ for any finitely generated $R$-modules $N\subseteq M$. 
\end{thm}

To any closure operation, one can associate a test ideal and a parameter test ideal defined respectively as follows:
\begin{dfn}\label{test}(see \cite[Definition 3.1]{pr}) Let $R$ be a ring and $\cl$ be a closure operation on $R$-modules. The big test ideal
of $R$ associated to $\cl$ is defined by $\tau_{\cl}(R) = \cap_{N\subseteq M}
\left(N :_R N^{\cl}_M\right)$, where the intersection runs over any (not necessarily finitely-generated) $R$-modules $N, M$.
\end{dfn}  


\begin{dfn}(\cite[Definition 4.3]{smith})\label{para}
Let $R$ be a ring and $\cl$ be a closure operation on $R$-modules. The parameter test ideal of $R$ is the ideal of all elements $c \in R$ such that $cI^{\cl}_R \subseteq I$ for all parameter ideals $I$ of $R$, i.e., $\tau_{\cl}^{\text{para}}(R) = \cap_{I\subseteq R}
\left(I :_R I^{\cl}_R\right)$, where the intersection runs over all parameter ideals $I$ of $R$. 
\end{dfn} 

With these definitions and notations in hand, we can now mention the results of K.E. Smith and the one by F. P\'erez and R. R.G, respectively, as follows:

\begin{thm}(\cite[Proposition 6.1]{smith})\label{smit}
Let $R$ be a local Cohen--Macaulay ring. Then $\tau_*^{\text{para}}(R)\not\subseteq J$ for any parameter ideal $J$ of $R$.   
\end{thm}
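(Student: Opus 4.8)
The plan is to run a cohomological argument based on Smith's description of the parameter test ideal inside top local cohomology, treating the Gorenstein case first and then isolating what is extra in the general Cohen--Macaulay case. First I would reduce to $R$ complete (parameter ideals and the parameter test ideal are compatible with completion in the direction needed), and, for the part of the argument that invokes absolute integral closure, further to a complete local domain of characteristic $p$ by killing minimal primes. Set $d=\dim R$ and fix a system of parameters $x_{1},\dots,x_{d}$, so that $\H^{d}_{\m}(R)=\varinjlim_{n} R/(x_{1}^{n},\dots,x_{d}^{n})$ with transition maps given by multiplication by $x_{1}\cdots x_{d}$. I will use two facts of Smith: (a) because $R$ is Cohen--Macaulay these transition maps are injective, and a class $[z]_{n}\in\H^{d}_{\m}(R)$ represented by $z+(x_{1}^{n},\dots,x_{d}^{n})$ lies in $0^{*}_{\H^{d}_{\m}(R)}$ (the tight closure of the zero submodule) if and only if $z\in(x_{1}^{n},\dots,x_{d}^{n})^{*}$; and (b) $\tau_{*}^{\text{para}}(R)=\operatorname{Ann}_{R}\!\big(0^{*}_{\H^{d}_{\m}(R)}\big)$. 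It is also useful to recall $0^{*}_{\H^{d}_{\m}(R)}=\ker\big(\H^{d}_{\m}(R)\to\H^{d}_{\m}(B)\big)$ for a big Cohen--Macaulay algebra $B$ (one may take $B=R^{+}$): this is the bridge to trace ideals, since it yields $\tr_{R}(B)\subseteq\tau_{*}^{\text{para}}(R)$ — if $\varphi\in\Hom_{R}(B,R)$, $b\in B$, $I$ a parameter ideal and $u\in I^{*}=IB\cap R=\sum_{i}a_{i}b_{i}$ with $a_{i}\in I$, then $\varphi(b)u=\varphi(bu)=\sum_{i}a_{i}\varphi(bb_{i})\in I$ — so the test ideal of the module closure $\cl_{B}$ in the sense of P\'erez--R.G.\ sits inside the parameter test ideal.

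Now assume, toward a contradiction, $\tau_{*}^{\text{para}}(R)\subseteq J$ for some parameter ideal $J$; taking $J=(x_{1},\dots,x_{d})$ we may use the same $x_{i}$. Let $\eta=[1]_{1}\in\H^{d}_{\m}(R)$ be the class of $1+J$. By (a) the map $R/J\hookrightarrow\H^{d}_{\m}(R)$ is injective, so $\eta\neq 0$ and $\operatorname{Ann}_{R}(\eta)=J$; hence $\tau_{*}^{\text{para}}(R)\cdot\eta\subseteq J\eta=0$, that is, $\eta\in\big(0:_{\H^{d}_{\m}(R)}\operatorname{Ann}_{R}(0^{*}_{\H^{d}_{\m}(R)})\big)$ by (b). If $R$ is Gorenstein this closes the argument: then $\H^{d}_{\m}(R)\cong E_{R}(k)$, the assignment $\mathfrak a\mapsto(0:_{E_{R}(k)}\mathfrak a)$ is an inclusion-reversing bijection between ideals of $R$ and submodules of $E_{R}(k)$, and therefore $\big(0:_{E_{R}(k)}\operatorname{Ann}(0^{*})\big)=0^{*}$. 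Thus $\eta\in 0^{*}_{\H^{d}_{\m}(R)}$, which by (a) with $n=1$ says $1\in J^{*}\subseteq\overline{J}$, contradicting that the integral closure of the proper ideal $J$ is proper.

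The main obstacle is upgrading this from Gorenstein to arbitrary Cohen--Macaulay $R$. When $R$ is not Gorenstein, $\H^{d}_{\m}(R)$ is the Matlis dual of the canonical module $\omega_{R}$ rather than of $R$, its submodules correspond to submodules of $\omega_{R}$ (not to ideals of $R$), and the equality $\big(0:_{\H^{d}_{\m}(R)}\operatorname{Ann}(0^{*}_{\H^{d}_{\m}(R)})\big)=0^{*}_{\H^{d}_{\m}(R)}$ — the only place the Gorenstein proof used self-duality — is no longer formal. Establishing it (equivalently: showing $0^{*}_{\H^{d}_{\m}(R)}$ is recovered from its own annihilator) is the crux; I would attack it either by exploiting that $0^{*}_{\H^{d}_{\m}(R)}$ is stable under the natural Frobenius action on $\H^{d}_{\m}(R)$, or through the trace-ideal route, which is again transparent when $R$ is Gorenstein: prove $\tr_{R}(B)\not\subseteq J$ by producing $\varphi\colon B\to R$ with $\varphi(1)\notin J$ — since $1\notin JB$ ($B$ is integral over $R$, so $JB\cap R\subseteq\overline{J}$) and $R/J$ is self-injective, choose an $R$-linear $\overline{\varphi}\colon B/JB\to R/J$ detecting the nonzero class of $1$ and lift it along $R\twoheadrightarrow R/J$, the lifting being exactly where the Cohen--Macaulayness of $B$ (vanishing of the relevant $\Ext$, which rests on $\H^{i}_{\m}(B)=0$ for $i<d$) is used, and again needing extra bookkeeping through $\omega_{R}$ when $R$ is not Gorenstein. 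In all versions the formal part is short; the substance lies in this single "annihilator-closedness / lifting" step.
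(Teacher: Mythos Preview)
The paper does not prove this statement; it is quoted from \cite[Proposition 6.1]{smith} in the introduction as motivation only, so there is no proof here to compare your attempt against.

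On the merits of your sketch: the local-cohomology argument in the Gorenstein case is correct, and you have accurately isolated the one nontrivial step in the general Cohen--Macaulay case, namely the equality $\bigl(0:_{\H^d_\m(R)}\tau\bigr)=0^*_{\H^d_\m(R)}$, which follows from Matlis self-duality when $R$ is Gorenstein but is not formal otherwise. Neither of your two proposed attacks on this equality is carried through, and each has an issue as stated. For the Frobenius route, $\bigl(0:_{\H^d_\m(R)}\tau\bigr)$ is not obviously $F$-stable: from $\tau\xi=0$ one obtains only $\tau^{[p]}F(\xi)=0$, and since $\tau^{[p]}\subseteq\tau$ this gives $(0:\tau)\subseteq(0:\tau^{[p]})$, the wrong direction for invoking maximality of $0^*$ among proper $F$-stable submodules. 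For the trace-ideal route, lifting $\overline\varphi\colon B/JB\to R/J$ to $B\to R$ requires $\Ext^1_R(B,J)=0$; you attribute this to $\H^i_\m(B)=0$ for $i<d$, but local duality relates those vanishings to $\Ext$ groups into $\omega_R$, not into $R$ or $J$, and in any case requires finiteness hypotheses that a big Cohen--Macaulay algebra need not satisfy. So the general Cohen--Macaulay case remains open in your write-up, as you yourself acknowledge; consult Smith's original paper for the complete argument.
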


\begin{thm}(\cite[Theorem 3.12]{pr})\label{testtrace}
Let $R$ be a local ring and $\cl = \cl_B$ for some $R$-module $B$. If $B$ is a finitely generated $R$-module or $R$ is complete, then $\tau_{\cl}(R) = \tr_R(B)$, where $\tr_R(B)= \sum_{\phi\in \Hom_R(B,R)}\phi(B)$.  
\end{thm}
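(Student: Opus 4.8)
The plan is to prove the two inclusions $\tr_R(B)\subseteq\tau_{\cl}(R)$ and $\tau_{\cl}(R)\subseteq\tr_R(B)$ separately, where $\cl=\cl_B$; only the second will need the hypothesis that $B$ be finitely generated or $R$ be complete. For the first, since $\tr_R(B)=\sum_{\phi}\phi(B)$ and each $\phi(B)$ is an ideal of $R$, it suffices to fix $\phi\in\Hom_R(B,R)$ and $b_0\in B$ and check that $\phi(b_0)\in(N:_R N^{\cl}_M)$ for every pair $N\subseteq M$. Take $u\in N^{\cl}_M$; by definition of $\cl_B$ the element $b_0\otimes u$ lies in the image of $B\otimes_R N\to B\otimes_R M$. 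Applying the $R$-linear map $\phi\otimes\id_M\colon B\otimes_R M\to R\otimes_R M=M$ and using that the square
\[
\begin{array}{ccc}
B\otimes_R N & \longrightarrow & B\otimes_R M\\
\downarrow & & \downarrow\\
N & \hookrightarrow & M
\end{array}
\]
(with vertical maps $\phi\otimes\id_N$ and $\phi\otimes\id_M$, after identifying $R\otimes_R X$ with $X$) commutes, one gets $\phi(b_0)\,u=(\phi\otimes\id_M)(b_0\otimes u)\in N$. Hence $\phi(b_0)\,N^{\cl}_M\subseteq N$. No hypothesis on $B$ or $R$ is used here.

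For the reverse inclusion I would first reformulate $\tau_{\cl}(R)$. Given a pair $N\subseteq M$, put $C:=M/N$; right exactness of tensoring with $B$ shows that $N^{\cl}_M/N$ equals $Z_B(C):=\{\,z\in C : b\otimes z=0\text{ in }B\otimes_R C\text{ for all }b\in B\,\}$, and therefore $(N:_R N^{\cl}_M)=\ann_R Z_B(C)$. As $N\subseteq M$ runs over all pairs, $C$ runs over all $R$-modules, so $\tau_{\cl}(R)=\bigcap_C\ann_R Z_B(C)$. Restricting to cyclic $C=R/I$ (i.e. to pairs $I\subseteq R$), where $Z_B(R/I)=(IB:_R B)/I$, already gives $\tau_{\cl}(R)\subseteq\bigcap_I\bigl(I:_R(IB:_R B)\bigr)$; thus it is enough to produce, for each $c\notin\tr_R(B)$, an ideal $I$ with $c\cdot(IB:_R B)\not\subseteq I$ — equivalently, an element $u$ with $uB\subseteq IB$ but $cu\notin I$.

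The construction of such an $I$ is the crux, and it is here that finiteness enters. When $B$ is finitely generated, fix a finite free presentation $R^{q}\xrightarrow{A}R^{p}\xrightarrow{\pi}B\to0$; then $\Hom_R(B,R)$ is identified with $\{v\in R^p : A^{\top}v=0\}$ and $\tr_R(B)$ with the ideal generated by the entries of all such $v$. I would choose $I$ built from the entries of $A$ together with the given element $c$, so that $B/IB$ — whose annihilator is exactly $(IB:_R B)$ — admits the required element $u$ witnessing $cu\notin I$; finiteness of the presentation is what makes available both the identification $\Hom_R(R^p,R)=R^p$ and a Nakayama/determinantal control of $B/IB$. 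For $B$ not finitely generated over a complete $R$, I would reduce to the finitely generated case by a limiting argument — writing the relevant closures in terms of finitely generated submodules of $B$ and using completeness to produce enough homomorphisms $B\to R$ — or argue directly that $\cl_B$ agrees with $\cl_{B_0}$, and $\tr_R(B)$ with $\tr_R(B_0)$, for a suitable finitely generated $B_0$.

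The main obstacle I anticipate is precisely this explicit choice of test pair: the inclusion $\tr_R(B)\subseteq\tau_{\cl}(R)$ and the reduction $\tau_{\cl}(R)\subseteq\bigcap_I(I:_R(IB:_R B))$ are formal, but pinning down, for an arbitrary $c\notin\tr_R(B)$, an ideal $I$ (or module $C$) that exactly detects the failure of $c$ to be a trace element — and then reducing the complete, non-finitely-generated case to the finitely generated one — is where the real work lies.
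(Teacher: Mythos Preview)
The paper does not contain a proof of this statement: it is quoted in the introduction as \cite[Theorem 3.12]{pr} purely for motivation, with no argument supplied. There is therefore nothing in the paper to compare your proposal against.

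As for the proposal itself, your proof of the inclusion $\tr_R(B)\subseteq\tau_{\cl}(R)$ is complete and correct. The reverse inclusion, however, is only an outline: you correctly reduce to finding, for each $c\notin\tr_R(B)$, a test pair $(I,u)$ with $uB\subseteq IB$ and $cu\notin I$, but you do not actually specify $I$ or verify that your proposed construction works. Saying that $I$ is ``built from the entries of $A$ together with $c$'' and invoking an unspecified ``Nakayama/determinantal control'' is not a proof; this is exactly the step where the argument could fail, and you have not shown that it does not. The reduction in the complete, non-finitely-generated case is likewise only gestured at --- neither of the two suggested routes (a limit argument producing maps $B\to R$, or replacing $B$ by a finitely generated $B_0$ with the same closure and trace) is substantiated, and the second is not obviously true. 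If you want a genuine proof, consult the cited source; the argument there for $\tau_{\cl}(R)\subseteq\tr_R(B)$ goes through a duality between closure and interior operations rather than the explicit test-ideal construction you sketch.
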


\begin{rem}\label{nww} Let $R$ be a complete local domain of positive characteristic. It is an immediate consequence of Theorem \ref{bigd}, Definitions \ref{test}, \ref{para}, and Theorem \ref{testtrace} that there exists a big Cohen--Macaulay $R$-module $B$ such that $\tr_R(B)$ is contained in the parameter test ideal $\tau_*^{para}(R)$ of tight closure.  
\end{rem}

 Theorem \ref{smit}, \ref{testtrace} and Remark \ref{nww} naturally give rise to the following question: 
 

\begin{ques}\label{mn}
Let $M$ be a module over a local Cohen--Macaulay ring $R$. When is $\tr_R(M)$ not contained in a parameter ideal?
\end{ques}

One of our results towards answering Question \ref{mn} is the following, where $\pd_R(-)$ stands for projective dimension.

\begin{thm}[Theorem \ref{big}]\label{1.10}

Let $R$ be a semi-local ring , $I\subseteq J(R)$ be an ideal of $R$, and $M$ be an $R$-module. Let $\pd_R(I)\leq h<\infty$ and $\tr_R(M)\subseteq I$. Assume either $R$ is local, or $I$ contains a non-zero-divisor. If $\Ext_R^{1\leq i\leq h}(M,R)=0$, then $M^*=0$, i.e., $\tr_R(M)=0$.
\end{thm}

If one moreover assumes $M$ is finitely generated, then a variation of Theorem \ref{1.10} can be provided without assuming the ring is semi-local, see Theorem \ref{hah}.  

For big Cohen--Macaulay modules over complete local Cohen--Macaulay rings, we prove the following, where $\tau_{\omega}(M) :=\sum_{f\in \Hom_R(M,\omega)}f(M)$ (see \ref{tau}).   

\begin{thm}[Theorem \ref{bigmac}]\label{newIntro} Let $(R,\m)$ be a complete local Cohen--Macaulay  ring. Let $M$ be a big Cohen--Macaulay $R$-module and $I\subseteq \m$ be an ideal of $R$ with $\pd_R I<\infty$. If $\omega$ is a dualizing module of $R$, then $\tau_{\omega}(M)\nsubseteq I\omega$. In particular, if $R$ is Gorenstein, then $\tr_R(M)\nsubseteq I$. 
    
\end{thm}

In view of Remark \ref{nww}, the complete local Gorenstein domain case of Smith's result \cite[Proposition 6.1]{smith} is an immediate consequence of Theorem \ref{newIntro}.

As another consequence, we are able prove an injective dimension version of Theorem \ref{1.10}. 

\begin{thm}[Remark \ref{3.22}]\label{111}  Let $(R,\m)$ be a complete local Cohen--Macaulay  ring  admitting a canonical module. Let $M$ be a big Cohen--Macaulay $R$-module. Let $0\neq J (\subseteq \m)$ be an  ideal of finite injective dimension. Then $R$ is generically Gorenstein, there exists a  canonical module $\omega$ satisfying $R\subseteq \omega\subseteq Q(R)$, and for any such canonical module,  $\tau_{\omega}(M)\nsubseteq J$. 
\end{thm}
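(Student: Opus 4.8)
My plan is to move first into the "generically Gorenstein'' situation — where $R$ is generically Gorenstein and $\omega$ is a genuine fractional ideal with $R\subseteq\omega\subseteq Q(R)$ — and then to run the injective counterpart of the argument behind Corollary~\ref{hah}, with "finite projective dimension'' replaced by "finite injective dimension'' and the two linked by Foxby--Sharp duality. For the ambient set-up: since $0\neq J\subseteq\m$ and $\operatorname{id}_RJ<\infty$, Bass's equality gives $\operatorname{id}_{R_\p}(J_\p)=\depth R_\p$, and for $\p\in\Ass_R(J)\subseteq\Ass_R(R)=\operatorname{Min}(R)$ (unmixedness of Cohen--Macaulay rings) this is $0$; thus $J_\p$ is a nonzero finitely generated injective $R_\p$-module, and $J_\p\hookrightarrow R_\p$ together with a length count forces $J_\p=R_\p\cong E_{R_\p}(k(\p))$, so $R_\p$ is self-injective, i.e.\ Gorenstein. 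Combining this with the Foxby--Sharp structure theory — which writes $J\cong\omega\otimes_RK$ with $K:=\Hom_R(\omega,J)$ of finite projective dimension and $\Tor^R_{>0}(\omega,K)=0$ — one concludes that $R$ is generically Gorenstein, and the standard construction of a canonical ideal then produces $\omega$ with $R\subseteq\omega\subseteq Q(R)$; fix one such $\omega$. Via $J\hookrightarrow\omega$ and $\operatorname{End}_R(\omega)=R$, identify $K$ with the ideal $(J:_R\omega)\subseteq R$; it is nonzero, and $K=R$ would give $J=\omega\supsetneq R$, contradicting $J\subseteq\m$, so $K\subseteq\m$.

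Next I would isolate the homological core, which parallels the proof of Theorem~\ref{1.10} with $R$ replaced by $\omega$. Since $M$ is maximal Cohen--Macaulay, $\Ext^{>0}_R(M,\omega)=0$ and $M$ is reflexive for $(-)^{\dagger}:=\Hom_R(-,\omega)$; set $N:=M^{\dagger}$, so $M\cong N^{\dagger}$. The hypothesis $\tau_\omega(M)\subseteq J$ says exactly that every homomorphism $M\to\omega$ has image inside $J$, i.e.\ the inclusion $J\hookrightarrow\omega$ induces an isomorphism $\Hom_R(M,J)\xrightarrow{\ \sim\ }\Hom_R(M,\omega)=N$. On the other hand $J\simeq\omega\ltensor_RK$ (because $\Tor^R_{>0}(\omega,K)=0$) with $K$ of finite projective dimension, so tensor--evaluation yields $\rhom_R(M,J)\simeq\rhom_R(M,\omega)\ltensor_RK\simeq N\ltensor_RK$, a complex concentrated in non-positive cohomological degrees; taking $H^0$ gives $\Hom_R(M,J)\cong N\otimes_RK$ (and incidentally $\Ext^{>0}_R(M,J)=0$). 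By naturality of tensor--evaluation with respect to $K\hookrightarrow R$, this identification is compatible with $J\hookrightarrow\omega$: the composite $N\otimes_RK\xrightarrow{\ \sim\ }\Hom_R(M,J)\xrightarrow{\ \sim\ }N$ is the multiplication map $N\otimes_RK\to N$ induced by $K\hookrightarrow R$.

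To finish, note that this makes the natural map $N\otimes_RK\to N$ surjective, i.e.\ $KN=N$; since $K\subseteq\m$ and $N$ is finitely generated, Nakayama's lemma forces $N=\Hom_R(M,\omega)=0$, and hence $M\cong\Hom_R(N,\omega)=0$, which is what we want.

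The step I expect to be the main obstacle is the naturality claim in the second paragraph: one has to verify carefully that the tensor--evaluation isomorphism $\Hom_R(M,J)\cong\Hom_R(M,\omega)\otimes_RK$ is compatible with the canonical inclusion $J\hookrightarrow\omega$, so that the resulting endomorphism of $N$ is \emph{exactly} multiplication by the ideal $K$ — this compatibility is precisely what licenses the final Nakayama argument. A secondary delicate point lies in the first paragraph, in upgrading "$R_\p$ Gorenstein for $\p\in\Ass_R(J)$'' to "$R$ generically Gorenstein'', i.e.\ controlling the minimal primes of $R$ that are not associated to $J$; this is where one must exploit the finite-injective-dimension hypothesis on $J$ in full, rather than only its localizations at $\Ass_R(J)$.
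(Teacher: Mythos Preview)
Your overall strategy coincides with the paper's: reduce to the statement that $\Hom_R(M,\omega)=K\cdot\Hom_R(M,\omega)$ for the ideal $K=(J:\omega)\subseteq\m$ of finite projective dimension, and then invoke Nakayama and canonical duality. The paper reaches exactly this point via Lemma~\ref{qr} (giving $J=(J:\omega)\omega$ with $\pd_R(J:\omega)<\infty$) and Corollary~\ref{6}, whose proof in turn is an application of the explicit matrix-and-exact-sequence argument of Theorem~\ref{4}(1) with $N=\omega$. Your replacement --- the derived tensor--evaluation isomorphism $\rhom_R(M,\omega)\ltensor_R K\simeq\rhom_R(M,\omega\ltensor_R K)$, valid because $K$ is perfect --- is a legitimate and somewhat slicker substitute: it packages the vanishing $\Tor^R_{>0}(\omega,K)=0$, $\Ext^{>0}_R(M,\omega)=0$, and $\Tor^R_{>0}(\Hom_R(M,\omega),K)=0$ into a single derived statement, whereas the paper unpacks these into concrete short exact sequences. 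The naturality you worry about is routine: the Foxby counit $\omega\otimes_R\Hom_R(\omega,J)\to J$ is evaluation, which under $\Hom_R(\omega,J)\cong(J:\omega)$ becomes $w\otimes r\mapsto rw$, so functoriality in $K\hookrightarrow R$ does identify your composite with multiplication.

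Your acknowledged gap in the first paragraph is genuine: showing $R_\p$ Gorenstein only for $\p\in\Ass_R(J)$ does not suffice, and invoking ``Foxby--Sharp'' does not bridge it, since $K=\Hom_R(\omega,J)$ could a priori vanish at a minimal prime outside $\Ass_R(J)$. The paper resolves this by first proving (Lemma~\ref{38}, resting on \ref{37}) that $J$ contains a non-zero-divisor; then $J_\p=R_\p$ for \emph{every} $\p\in\Ass(R)$, and generic Gorensteinness follows. In fact your own length-count argument already gives this once you feed in that single extra input: if $J$ contained only zero-divisors then $J\subseteq\q$ for some $\q\in\Ass(R)=\operatorname{Min}(R)$, whence $J_\q\subseteq\q R_\q$; but $J_\q$ has finite injective dimension over the Artinian ring $R_\q$, and if $J_\q\neq0$ your length argument forces $J_\q=R_\q$, a contradiction, while $J_\q=0$ for all $\q\in\Ass(R)$ would force $\Ass(J)=\emptyset$. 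The one nontrivial piece is excluding $J_\q=0$ at \emph{some} minimal primes while $J\neq0$; this is exactly what Lemma~\ref{37} (via completion and \cite[Proposition~6.2]{Ab}) supplies.
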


Under certain vanishing hypotheses on  Ext modules, we are able to relate the trace and ideal of entries in a free resolution of certain modules in Section \ref{trid}. As a consequence of this and Theorem \ref{1.10}, we obtain the following two results, where
$\ord(I):=\sup \{n\in\mathbb{Z}_{\ge 0}\text{ }|\text{ } I\subseteq \m^n\}$ and $\I_1(\partial_j)$ will stand for the ideal generated by the entries of a matrix representation of the map $\partial_j$ in a free resolution $(F_i,\partial_i)_{i=0}^\infty$ of $M$.

\begin{thm}[Corollary \ref{44}]\label{133}   Let $I$ be an ideal of $R$ such that $ I$ is contained in the Jacobson radical of $R$ and $\pd_R (I)\leq h<\infty$. Let $M$ be a finitely generated $R$-module and $j\geq 1$ be an integer such that $\Ext_R^{j\leq i \leq j+h}(M, R)=0$. If $(F_i,\partial_i)_{i=0}^\infty$ is a free resolution of $M$, and $\I_1(\partial_j)\subseteq I$, then $\pd_R M<j$.  
\end{thm}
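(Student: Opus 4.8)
The plan is to reduce Theorem \ref{133} to the already-available Theorem \ref{1.10}(1) by a syzygy shift. Let $(F_i,\partial_i)_{i=0}^\infty$ be the given free resolution of $M$, and set $N := \coker(\partial_{j+1}) = \im(\partial_j)$, which is (up to free summands) the $j$-th syzygy $\syz^j M$; it is a finitely generated $R$-module whose minimal (or given) free resolution is the truncation $(F_i,\partial_i)_{i\ge j}$ reindexed. First I would record two dictionary facts. Fact (a): $\tr_R(N)\subseteq \I_1(\partial_j)$. Indeed, $N$ is a submodule of the free module $F_{j-1}$, generated by the columns of a matrix representing $\partial_j$; for any $\phi\in\Hom_R(N,R)$, extend it along the inclusion $N\hookrightarrow F_{j-1}$ — not possible in general, so instead argue directly: every homomorphism $N\to R$ is given by a row vector applied to the generators, hence its image lies in the ideal generated by the entries of $\partial_j$, which is $\I_1(\partial_j)$. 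Thus $\tr_R(N)\subseteq\I_1(\partial_j)\subseteq I$. Fact (b): $\Ext_R^i(N,R)\cong\Ext_R^{i+j}(M,R)$ for all $i\ge 1$, by dimension shifting along the resolution (the standard long exact sequences coming from $0\to\syz^{k+1}M\to F_k\to\syz^k M\to 0$, with $\Ext^{\ge 1}$ of free modules vanishing). Hence the hypothesis $\Ext_R^{j\le i\le j+h}(M,R)=0$ — which I will use in the form $\Ext_R^{i}(M,R)=0$ for $j\le i\le j+h$, i.e.\ $i$ ranging over $h+1$ consecutive values starting at $j$ — translates into $\Ext_R^{1\le i\le h+1}(N,R)=0$; in particular $\Ext_R^{1\le i\le h}(N,R)=0$, which is exactly the vanishing hypothesis of Theorem \ref{1.10}(1).

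Next I would apply Theorem \ref{1.10}(1) to the module $N$ with the ideal $I$: since $I$ is contained in the Jacobson radical, $\pd_R(I)\le h<\infty$, $N$ is finitely generated, $\Ext_R^{1\le i\le h}(N,R)=0$, and $\tr_R(N)\subseteq I$ by Fact (a), we conclude $\tr_R(N)=0$, i.e.\ $N$ is torsion (annihilated by a non-zerodivisor after localizing, in the precise sense $N^*=0$ used in the theorem).

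Finally I would upgrade "$N$ torsion" to "$N$ free, hence $\pd_R M<j$." Here is where the extra Ext vanishing (the $(h+1)$-st one, which I set aside above) enters. We have $\Ext_R^{h+1}(N,R)=\Ext_R^{j+h}(M,R)=0$ together with $\pd_R I\le h$; I would mimic the proof of Theorem \ref{1.10} itself. The point is that the containment $\tr_R(N)\subseteq I$ gives, by the $\Hom(-,R)$-dual / trace-map mechanism used to prove Theorem \ref{1.10}, a surjection (or an isomorphism after the torsion is killed) exhibiting $N$ as a direct summand of a free module once $\Ext^{1\le i\le h+1}(N,R)=0$; combined with $N$ being torsion this forces $N=0$ — no, more carefully: the proof of \ref{1.10} shows under $\Ext^{1\le i\le h}(M,R)=0$ that the natural map $M\to M^{**}$ is injective with free-related cokernel controlled by $\pd I$, so torsion-freeness plus torsion gives $M^{**}=0$; applying this to $N$ and using that $N$ is a syzygy (hence torsion-free over the relevant locus) yields $N=0$, equivalently $F_i=0$ for $i\ge j$ in a minimal resolution, i.e.\ $\pd_R M\le j-1<j$. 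The main obstacle is precisely this last step: "torsion syzygy with the right Ext-vanishing is zero." I expect it is handled inside the proof of Corollary \ref{hah}/Theorem \ref{1.10} already (its part (2) is of exactly this flavor, going from a torsion/vanishing conclusion to $M=0$), so the cleanest route is to invoke that argument verbatim for $N$ rather than re-deriving it; if an index mismatch appears, shifting to use all $h+1$ of the vanishing Ext's for $N$ — $\Ext_R^{1\le i\le h+1}(N,R)=0$, matching the full range $j\le i\le j+h$ for $M$ — should close the gap.
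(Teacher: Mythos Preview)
Your overall strategy---pass to $N=\Im(\partial_j)$, show $\tr_R(N)\subseteq I$, apply Theorem~\ref{1.10}(1), then conclude $N=0$---is the paper's route. The gap is in Fact~(a): the inclusion $\tr_R(N)\subseteq \I_1(\partial_j)$ is false in general. Take $R=k[x]$, $M=R/(x)$, $j=1$; then $N=xR\cong R$, so $\tr_R(N)=R$, while $\I_1(\partial_1)=(x)$. Your justification (``every homomorphism $N\to R$ is given by a row vector applied to the generators'') is precisely the extension along $N\hookrightarrow F_{j-1}$ that you just conceded is not available in general. What makes it available here is the hypothesis $\Ext^j_R(M,R)=0$: this equals $\Ext^1_R(\coker\partial_j,R)$, so its vanishing forces $F_{j-1}^*\to N^*$ to be surjective, and then every $\phi: N\to R$ does extend to $F_{j-1}$, after which the row-vector argument is valid. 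The paper packages this step as Lemma~\ref{43}(1), obtaining in fact the equality $\I_1(\partial_j)=\tr_R(\Im(\partial_j))$. Relatedly, your count in Fact~(b) is off by one: dimension shifting gives $\Ext^i_R(N,R)\cong\Ext^{i+j}_R(M,R)$ only for $i\ge 1$, so the hypothesis yields $\Ext^{1\le i\le h}_R(N,R)=0$, not up to $h+1$; the ``extra'' vanishing $\Ext^j_R(M,R)=0$ does not become an Ext of $N$---its role is exactly to supply the extension needed for Fact~(a).

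Your last step is also much harder than necessary. Once Theorem~\ref{1.10}(1) gives $N^*=0$, you are finished: since $j\ge 1$, $N=\Im(\partial_j)$ is a submodule of the free module $F_{j-1}$, so each coordinate projection $F_{j-1}\to R$ restricts to an element of $N^*=0$, whence $N=0$. Equivalently, $N$ is both torsion and torsion-free, hence zero; this is exactly how the paper closes the argument. No biduality, no part~(2) of Theorem~\ref{1.10}, and no further Ext vanishing are needed.
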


\begin{thm}[Corollary \ref{62}]\label{162} Let $(R,\m)$ be a local Cohen--Macaulay ring of depth $t\geq 1$, with infinite residue field, and $M$ be a non-free finitely generated $R$-module such that $R$ is not a direct summand of $\syz_R M$, and $\Ext^{1\le i \le t}_R(M,R)=0$. If $(F_i,\partial_i)_{i=0}^\infty$ is a minimal free resolution of $M$, then $1\le \ord\left(\I_1(\partial_1)\right)\le e(R)-\mu(\m)+t$, where $e(R)$ is the Hilbert-Samuel multiplicity. In particular, equality holds throughout when $R$ has minimal multiplicity.  
\end{thm}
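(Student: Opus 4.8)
The plan is to deduce this as a consequence of Theorem \ref{133} (= Corollary \ref{44}) together with a multiplicity estimate on the order of an ideal of finite projective dimension. First I would set $I=\I_1(\partial_1)$, the ideal of entries of the first differential in a minimal free resolution $(F_i,\partial_i)$ of $M$. Since the resolution is minimal, $I\subseteq\m$, so $I$ lies in the Jacobson radical and $\ord(I)\ge 1$ provided $I\neq 0$; and $I\neq 0$ because $M$ is non-free, hence $\partial_1\neq 0$. This gives the lower bound $1\le\ord(\I_1(\partial_1))$ immediately. For the upper bound, the key observation is that $\im(\partial_1)=\syz_R M$, and the hypothesis $\Ext^{1\le i\le t}_R(M,R)=0$ together with $\depth R=t\ge 1$ should force $\syz_R M$ to have depth $t$, i.e.\ to be maximal Cohen--Macaulay; combined with ``$R$ is not a direct summand of $\syz_R M$'', I would invoke Theorem \ref{133} in contrapositive form: if $\I_1(\partial_1)$ were contained in an ideal $I'$ with $\pd_R(I')<\infty$ and $I'$ in the Jacobson radical, then (taking $j=1$ and checking $\Ext^{1\le i\le 1+h}_R(M,R)=0$ for the relevant $h$) we would conclude $\pd_R M<1$, i.e.\ $M$ free, a contradiction. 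So $\I_1(\partial_1)$ is \emph{not} contained in any ideal of finite projective dimension lying in $\m$.

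The heart of the matter is then to convert ``not contained in an ideal of finite projective dimension'' into the numerical bound $\ord(\I_1(\partial_1))\le e(R)-\mu(\m)+t$. Here I would use the infinite residue field to choose a minimal reduction, i.e.\ a system of parameters $\underline{x}=x_1,\dots,x_t$ generating an ideal $\mathfrak q$ that is a reduction of $\m$, so that $\mathfrak q$ is generated by a regular sequence (as $R$ is Cohen--Macaulay) and hence $\pd_R(\mathfrak q)<\infty$ and $\mathfrak q\subseteq\m$. More generally, for any $n\ge 0$ the ideal $\mathfrak q+\m^{n}$ has the property that... actually the cleaner route: since $\mathfrak q$ is a reduction of $\m$ with reduction number $r$, one has $\m^{r+1}=\mathfrak q\m^{r}$, and the ideal $\mathfrak q$ has finite projective dimension. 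If $\ord(\I_1(\partial_1))=s$, then $\I_1(\partial_1)\subseteq\m^{s}$; I want to compare $\m^s$ with an ideal of finite projective dimension built from $\mathfrak q$. The standard fact is that $e(R)=\operatorname{length}(R/\mathfrak q)$ when $\mathfrak q$ is a parameter ideal generated by a minimal reduction, and $\mu(\m)-t$ measures the ``extra'' generators of $\m$ beyond the parameters; a length/colon computation should show that $\m^{s}\subseteq\mathfrak q$ once $s\ge e(R)-\mu(\m)+t+1$ — at that point $\m^s$ itself has finite projective dimension (being inside the finite-projective-dimension ideal $\mathfrak q$, one argues via $\m^s\subseteq\mathfrak q$ and... hmm, containment alone does not give finite $\pd$).

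The cleaner formulation, and the one I would actually carry out: contrapositively, Theorem \ref{133} shows that if $\pd_R M\ge j=1$ then $\I_1(\partial_1)$ is \emph{not} contained in $\mathfrak q$ for our minimal reduction $\mathfrak q$ (which is generated by a regular sequence, so $\pd_R\mathfrak q=0$... no, $\pd_R\mathfrak q = t-1<\infty$; and we need $\Ext^{1\le i\le 1+(t-1)}_R(M,R)=\Ext^{1\le i\le t}_R(M,R)=0$, which is exactly our hypothesis, and we need $R$ not a summand of $\syz_R M=\im\partial_1$, also a hypothesis). Hence $\I_1(\partial_1)\not\subseteq\mathfrak q$, so $\ord(\I_1(\partial_1))\le\ord(\mathfrak q)-1$... no: $\I_1(\partial_1)\not\subseteq\mathfrak q$ does not bound its order by $\ord(\mathfrak q)$. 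Instead I would use that $\m^{N}\subseteq\mathfrak q$ for $N:=e(R)-\mu(\m)+t+1$ (this is the real computation — bounding the reduction-type exponent by $e(R)$, $\mu(\m)$ and $t$, where $\mu(\m)-t$ is the embedding codepth and $e(R)$ controls how long $\m^i/\mathfrak q\m^{i-1}$ can be nonzero); then $\I_1(\partial_1)\not\subseteq\mathfrak q\supseteq\m^{N}$ forces $\I_1(\partial_1)\not\subseteq\m^{N}$, i.e.\ $\ord(\I_1(\partial_1))\le N-1=e(R)-\mu(\m)+t$. The ``in particular'' clause follows since minimal multiplicity means $e(R)=\mu(\m)-t+1$, making the upper bound equal to $1$, which together with the lower bound $\ge 1$ gives equality. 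The main obstacle I anticipate is making precise and correct the inequality $\m^{e(R)-\mu(\m)+t+1}\subseteq\mathfrak q$: this requires a careful Hilbert-function argument relating the reduction number of $\m$ with respect to a minimal reduction to the multiplicity and the number of generators, presumably via $\sum_{i\ge 0}\operatorname{length}(\m^i/(\mathfrak q\m^{i-1}+\m^{i+1}))$-type bookkeeping or an appeal to a known bound on reduction numbers in Cohen--Macaulay local rings.
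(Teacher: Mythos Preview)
Your approach is correct and is essentially the same as the paper's, just packaged differently. The paper first identifies $\I_1(\partial_1)=\tr_R(\syz_R M)$ via Lemma~\ref{43}(2) (using $\Ext^1_R(M,R)=0$) and then applies Theorem~\ref{42} to $\syz_R M$; you instead invoke Corollary~\ref{44} directly with $j=1$, $h=t-1$, and $I=\mathfrak q$ a parameter ideal. Since Corollary~\ref{44} is itself Lemma~\ref{43}(1) combined with Corollary~\ref{hah}(1), and Theorem~\ref{42} is Corollary~\ref{hah}(1) combined with the numerical bound, the two routes unwind to the same ingredients.

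Your ``main obstacle'' --- the inequality $\m^{e(R)-\mu(\m)+t+1}\subseteq\mathfrak q$ for a suitable parameter ideal $\mathfrak q$ --- is precisely what the paper handles via the generalized Loewy length: by definition there is a system of parameters $\mathbf{x}$ with $\m^{\ell\ell(R)}\subseteq(\mathbf{x})$, and the bound $\ell\ell(R)-1\le e(R)-\mu(\m)+t$ is cited from Koh--Lee \cite[Remark~1.5, Theorem~3.5(ii)]{kl}. So no Hilbert-function computation is needed on your part; just cite this. One small bonus of your packaging: going through Corollary~\ref{44} directly, you never actually use the hypothesis that $R$ is not a direct summand of $\syz_R M$ (the lower bound $\ord\ge 1$ comes from minimality of the resolution alone), whereas the paper's route through Theorem~\ref{42} invokes it explicitly.
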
  

In Section \ref{sec7}, we consider the question that if $J$ is an ideal defining the singular locus of $R$, then when is $J$ not contained in any ideal of finite projective or injective dimension.  We prove certain results in this direction, for instance Propositions \ref{sin}, \ref{intg}, and also relate them to obstruction of containment of trace ideals. One of our sample results is the following.   

\begin{thm}[Theorem \ref{75}] Let $M$ be an $R$-module of full support and $\depth_{R_{\p}} M_{\p} \ge \depth R_{\p} $ for all $\p \in \reg(R)$. Let $I, L$ be ideals of $R$, where $I$ is a proper  ideal. Then the following hold true:

\begin{enumerate}[\rm(1)]

    \item If $R$ is local, $I$ is a radical ideal, $n\ge 1$ is an integer such that $\tr_R (M)\subseteq I^{(n)}L$, then $\pd_R I^{(n)}L=\infty=\id_R I^{(n)}L$.

    \item If $I$ is integrally closed and $\tr_R (M)\subseteq I$, then $\pd_{R_{\p}}(I_{\p})=\infty=\id_{R_{\p}}(I_{\p})$ for all $\p\in \Min(R/I)$. In particular, $\pd_R I=\infty=\id_R(I)$.    
\end{enumerate}
    
\end{thm}

The organization of the paper is as follows: In Section \ref{prelim} we record basic notations, conventions and some preparatory results that are used for the rest of the paper. Section  \ref{secmain} is devoted to proving Theorems \ref{1.10}, Theorem \ref{newIntro} and \ref{111}, a crucial ingredient for these is the main technical result Theorem \ref{4}. Section \ref{trid} contains Theorem \ref{133} among some other results relating trace ideal and ideal of entries of maps in a free resolution of modules. Section \ref{ortr} is devoted to obtaining some upper bounds of the $\m$-adic order of certain trace ideals over local Cohen--Macaulay rings $(R,\m)$. As a consequence of this, Section \ref{minfreeord} records some upper bounds of the $\m$-adic order of ideal of entries of maps in free resolutions, in particular Theorem \ref{162} is deduced in this section.

\begin{ac} We are immensely thankful to our PhD advisor Hailong Dao for urging us to look for a trace ideal version of \cite[Proposition 6.1]{smith}. We are also grateful to Linquan Ma for suggesting us to extend some of the results in a previous version of this paper to modules which are not necessarily finitely generated, for explaining Lemma \ref{bigcm} to us, and for asking us about obstruction of containment of ideals defining singular locus in ideals of finite homological dimension, which prompted us to write Section \ref{sec7}. 
\end{ac}

\section{preliminaries}\label{prelim}   
In this section, we fix some basic setup and notations, that will be used throughout the rest of the paper without possible further references. We will also record some preliminary observations for further use in the latter sections.

Throughout, $R$ will denote a commutative Noetherian  ring, with total ring of fractions $Q(R)$, Jacobson radical $J(R)$. $\overline R$ will stand for the integral closure of $R$ in $Q(R)$. When $R$ admits a canonical module (necessarily $R$ is Cohen--Macaulay in such a case), we will use the terminologies \textit{dualizing module} and \textit{canonical module} interchangeably. In this case, we will denote such a module by either $D$ or $\omega_R $ (often dropping $R$ and just writing $\omega$ when the ring $R$ in context is clear). 

\begin{chunk} For $R$-submodules $M,N$ of $Q(R)$, $(M:N)$ will stand for $\{x\in Q(R)|xN\subseteq M\}$, and this is clearly an $R$-submodule of $Q(R)$. 
\end{chunk} 

\begin{chunk}\label{tau}
For $R$-modules $X,M$, let $\tau_X(M) :=\sum_{f\in \Hom_R(M,X)}f(M)$ (see \cite[Section 2]{gsi}). We will denote $\tau_R(M)$ by $\tr_R(M)$, and when the ring in question is clear, we will drop the suffix $R$ from $\tr_R(-)$. It is clear that if $\Hom_R(M,X)=0$, then $\tau_X(M)=0$. Conversely, if $\tau_X(M)=0$, then $\Hom_R(M,X)=\Hom_R(M,\tau_X(M))=0$. In particular, $\tr_R(M)=0$ if and only if $M^*=0$.  
\end{chunk}   

\begin{chunk}\label{conductor} Given a birational extension $R\subseteq S$, we define the conductor ideal of $S$ in $R$, denoted by $c_R(S)$, to be $(R:S)=\{r\in Q(R):rS\subseteq R\}$. Since $1\in S$, so $c_R(S)=(R:_R S)$. Notice that, $c_R(S)$ is an ideal of $R$. Moreover, since $S\cdot c_R(S) \cdot S\subseteq S\cdot c_R(S)\subseteq R$, so $S\cdot c_R(S)\subseteq (R:S)=c_R(S)$. Hence $c_R(S)$ is always an ideal of $S$ as well. Since $c_R(S)=(R:S)=(R:S)S=\tr_R(S)$ (\cite[Proposition 2.4(2)]{tk}), hence $c_R(S)$ is always a trace ideal of $R$.  

Now, $S$ is module finite over $R$ if and only if the conductor ideal $ c_R(S)$ contains a non-zero-divisor of $R$ (i.e., $ c_R(S)\in \mathcal F_R$ in the notation of \cite{gsi}). So, if $S$ is a finite birational extension of $R$, then $(R:c_R(S))=(c_R(S):c_R(S))$ by \cite[Proposition 2.4(3)]{tk}.  We will usually denote $c_R(\overline R)$ by just $\mathfrak c$. 
\end{chunk}

\begin{chunk}

For a finitely generated $R$-module $M$, consider a projective presentation $P_1 \stackrel{\eta}{\to} P_0 \to M \to 0$ of $M$ by finitely generated projective modules. The  $($Auslander$)$ transpose of $M$ is defined to be  $\Tr(M) := \coker(\eta^*)$. 
    By the definition of $\Tr(M)$, there is an exact sequence
\begin{equation}\label{eqn:es-M*-Tr-M}
    0 \to M^* \to P_0^* \stackrel{\eta^*}{\longrightarrow} P_1^* \to \Tr(M)\to 0.
\end{equation}
It is well known that $\Tr M$ is well-defined up to projective summands, and that $M$ and $\Tr(\Tr M)$ are stably isomorphic (i.e., these two modules are isomorphic up to projective summands).  For further details on this topic, see \cite{mas}.

\end{chunk}  

Now we record two observations regarding morphisms between modules that can be given by multiplication by some matrix. We will write elements of a free $R$-module $R^{\oplus s}$ as column vectors.

For an $R$-module $M$ and for any integer $s>0$, the modules $R^{\oplus s}\otimes_R M$ and $M^{\oplus s}$ are isomorphic via the map $\begin{pmatrix}
r_1\\
\vdots\\
r_s
\end{pmatrix}\otimes m\mapsto \begin{pmatrix}
r_1m\\
\vdots\\
r_sm
\end{pmatrix}$. Throughout this article, we will use this identification without possibly referencing further.

We recall that any $R$-linear map $f$ of free modules has a matrix representation, and the representation does not depend on the choice of basis of the free modules.

\begin{lem}\label{1} Let $f:R^{\oplus s}\to R^{\oplus t}$ be an $R$-linear map, whose matrix representation has entries in an ideal $I$ of $R$. Then for any $R$-module $N$, the induced map $f\otimes \id_N: N^{\oplus s}\to N^{\oplus t}$ can be given by a matrix multiplication with entries in $I$.
\end{lem}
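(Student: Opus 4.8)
The plan is to unwind the statement directly from how matrix representations interact with the tensor product. Under the identifications $R^{\oplus s}\otimes_R N \cong N^{\oplus s}$ and $R^{\oplus t}\otimes_R N \cong N^{\oplus t}$ fixed above, I want to compute the image of a typical column vector $(n_1,\dots,n_s)^{\mathrm{T}}\in N^{\oplus s}$ under $f\otimes\id_N$ and show it is obtained by multiplying the column vector by the same matrix $A=(a_{ij})$ that represents $f$, interpreting $a_{ij}n_j$ via the $R$-module structure of $N$.

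First I would write $A=(a_{ij})_{1\le i\le t,\,1\le j\le s}$ with $a_{ij}\in I$ for the matrix of $f$ with respect to the standard bases $e_1,\dots,e_s$ of $R^{\oplus s}$ and $e'_1,\dots,e'_t$ of $R^{\oplus t}$, so that $f(e_j)=\sum_{i=1}^t a_{ij}e'_i$. An arbitrary element of $N^{\oplus s}$ corresponds under the chosen isomorphism to $\sum_{j=1}^s e_j\otimes n_j$ for suitable $n_j\in N$. Then by $R$-bilinearity of $\otimes$ and the definition of $f\otimes\id_N$,
\[
(f\otimes\id_N)\Big(\sum_{j=1}^s e_j\otimes n_j\Big)=\sum_{j=1}^s f(e_j)\otimes n_j=\sum_{j=1}^s\Big(\sum_{i=1}^t a_{ij}e'_i\Big)\otimes n_j=\sum_{i=1}^t e'_i\otimes\Big(\sum_{j=1}^s a_{ij}n_j\Big).
\]
Transporting this back through the isomorphism $R^{\oplus t}\otimes_R N\cong N^{\oplus t}$, the image is the column vector whose $i$-th entry is $\sum_{j=1}^s a_{ij}n_j$; that is exactly the product of $A$ with the column vector $(n_1,\dots,n_s)^{\mathrm{T}}$, where the scalars $a_{ij}$ act on entries of $N$ via the module structure. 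Since each $a_{ij}\in I$, this matrix representation of $f\otimes\id_N$ has entries in $I$, as claimed.

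I do not expect a genuine obstacle here; the content is purely the compatibility of the tensor product with direct sums and the functoriality of $-\otimes_R N$, combined with the (already fixed) identification $R^{\oplus s}\otimes_R N\cong N^{\oplus s}$. The only point requiring a little care is bookkeeping: making sure the isomorphisms on the source and target are the canonical ones and that ``matrix multiplication with entries in $I$'' is interpreted as the $a_{ij}$ acting $R$-linearly on elements of $N$ rather than literally as an element of $I\otimes N$ or anything more exotic. Once the conventions are pinned down, the computation displayed above is the whole proof.
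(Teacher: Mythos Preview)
Your proof is correct and essentially identical to the paper's own argument: both identify a general element of $N^{\oplus s}$ with $\sum_j e_j\otimes n_j$, apply $f\otimes\id_N$ using $f(e_j)=\sum_i a_{ij}e'_i$, and read off that the result is $A(n_1,\dots,n_s)^{\mathrm T}$ with the same matrix $A$. The only cosmetic difference is that you make the target basis $e'_i$ explicit when moving the scalars $a_{ij}$ across the tensor, whereas the paper carries the column-vector identification through directly; this does not change the argument.
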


\begin{proof}  
Let $A=[(a_{ij})]_{1\leq i \leq t, 1\leq j \leq s}$ be the matrix representation of $f$, so $a_{ij}\in I$.  Now let, $\begin{pmatrix}
n_1\\
\vdots\\
n_s
\end{pmatrix}$ be an arbitrary element of $N^{\oplus s}$. Then $\begin{pmatrix}
n_1\\
\vdots\\
n_s
\end{pmatrix}$ can be identified with the element $\sum_{j=1}^s (e_j\otimes n_j)\in R^{\oplus s}\otimes N$, where $e_j$ is the $j$-th standard basis vector in $R^{\oplus s}$. Then we have $(f\otimes \id_N)\begin{pmatrix}
n_1\\
\vdots\\
n_s
\end{pmatrix}=\sum_{j=1}^s (f(e_j)\otimes n_j)=\sum_{j=1}^s (Ae_j\otimes n_j)=\sum_{j=1}^s \left(\begin{pmatrix}
a_{1j}\\
\vdots\\
a_{tj}
\end{pmatrix}\otimes n_j\right)=\sum_{j=1}^s \begin{pmatrix}
a_{1j}n_j\\
\vdots\\
a_{tj}n_j
\end{pmatrix}=\begin{pmatrix}
\sum_{j=1}^sa_{1j}n_j\\
\vdots\\
\sum_{j=1}^sa_{tj}n_j
\end{pmatrix}=A\begin{pmatrix}
n_1\\
\vdots\\
n_s
\end{pmatrix}$, which shows that the map $f\otimes \id_N: N^{\oplus s}\to N^{\oplus t}$ can be given by the same matrix multiplication $A$, whose entries lie in $I$.
\end{proof}

\begin{lem}\label{2} Let $N$ be an $R$-module and let, $g:N^{\oplus s}\to N$ be an $R$-linear map which can be given by multiplication by a matrix with entries in an ideal $I$ of $R$. Then for any $R$-module $M$, the map $\Hom_R(M,g):\Hom_R(M,N)^{\oplus s}\to \Hom_R(M,N)$ can be given by a matrix multiplication with entries in $I$.
\end{lem}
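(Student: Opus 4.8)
The plan is to run the same kind of direct computation as in Lemma~\ref{1}, now using the canonical $R$-linear isomorphism $\Hom_R(M,N^{\oplus s})\cong \Hom_R(M,N)^{\oplus s}$ which sends a homomorphism $M\to N^{\oplus s}$ to the column of its $s$ component homomorphisms. First I would fix a matrix representation $B=\begin{pmatrix} b_1 & \cdots & b_s\end{pmatrix}$ of $g$ with each $b_j\in I$, so that for a column vector $(n_1,\dots,n_s)^{\mathsf t}\in N^{\oplus s}$ one has $g\big((n_1,\dots,n_s)^{\mathsf t}\big)=\sum_{j=1}^s b_jn_j$.

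Next I would unwind $\Hom_R(M,g)$ through the identification above. An element of $\Hom_R(M,N)^{\oplus s}$ is a column $(\phi_1,\dots,\phi_s)^{\mathsf t}$ with each $\phi_j\in\Hom_R(M,N)$, corresponding to the homomorphism $\Phi\colon M\to N^{\oplus s}$, $m\mapsto (\phi_1(m),\dots,\phi_s(m))^{\mathsf t}$. Then $\Hom_R(M,g)$ sends this column to $g\circ\Phi$, and for $m\in M$ I compute $(g\circ\Phi)(m)=g\big((\phi_1(m),\dots,\phi_s(m))^{\mathsf t}\big)=\sum_{j=1}^s b_j\phi_j(m)=\big(\sum_{j=1}^s b_j\phi_j\big)(m)$, where $b_j\phi_j$ denotes the scalar multiple of $\phi_j$ in the $R$-module $\Hom_R(M,N)$. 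Hence $\Hom_R(M,g)\big((\phi_1,\dots,\phi_s)^{\mathsf t}\big)=\sum_{j=1}^s b_j\phi_j=B\,(\phi_1,\dots,\phi_s)^{\mathsf t}$, i.e.\ $\Hom_R(M,g)$ is realized by multiplication by the very same matrix $B$, whose entries lie in $I$.

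The only point requiring any care — and it is the sole potential obstacle — is verifying that the identification $\Hom_R(M,N^{\oplus s})\cong\Hom_R(M,N)^{\oplus s}$ is $R$-linear and that under it ``sum of the entry-scaled component maps'' really matches ``apply $g$ componentwise then add''; this is the standard naturality of the product/coproduct decomposition of $\Hom$ and needs no hypotheses on $M$, $N$, or $R$ beyond what is already in force. Note also that since the target of $g$ is a single copy of $N$, the matrix $B$ is a $1\times s$ matrix, so no transposition of $B$ is involved when passing to $\Hom$, in contrast with the subtlety one would encounter for a general map $N^{\oplus s}\to N^{\oplus t}$.
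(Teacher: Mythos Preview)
Your proof is correct and follows essentially the same approach as the paper's own argument: both fix a $1\times s$ matrix representation of $g$, identify $\Hom_R(M,N^{\oplus s})$ with $\Hom_R(M,N)^{\oplus s}$, and then compute directly that $\Hom_R(M,g)$ is given by multiplication by that same matrix. The paper's $\psi_1,\psi_2$ are exactly your identification $\Phi\leftrightarrow(\phi_1,\dots,\phi_s)^{\mathsf t}$ and the post-composition by $g$, so the two write-ups differ only in notation.
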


\begin{proof}
Let $g:N^{\oplus s}\to N$ be given by multiplication by a matrix $A=(r_1,\cdots ,r_s)$, where $r_i\in I$ for all $i=1,\cdots,s$. Hence $g(n_1,\cdots,n_s)=r_1n_1+\cdots+r_sn_s$ for all $(n_1,\cdots,n_s)\in N^{\oplus s}$. Now note that, the map $\Hom_R(M,g):\Hom_R(M,N)^{\oplus s}\to \Hom_R(M,N)$ is defined by the following composition:
$$\Hom_R(M,N)^{\oplus s}\xrightarrow{\psi_1}\Hom_R(M,N^{\oplus s})\xrightarrow{\psi_2} \Hom_R(M,N)$$ where $\psi_1$ is defined by $\psi_1(f_1,\cdots,f_s)=f_1\oplus\cdots\oplus f_s$, where $(f_1\oplus\cdots\oplus f_s)(m)=(f_1(m),\cdots,f_s(m))$ and $\psi_2$ is defined by $\psi_2(f)=g\circ f$. Hence for all $m\in M$ we have,
\begin{align*}
(\Hom_R(M,g)(f_1,\cdots,f_s))(m)=((\psi_2\circ\psi_1)(f_1,\cdots,f_s))(m)&=(\psi_2(f_1\oplus\cdots\oplus f_s))(m)\\
&=(g\circ(f_1\oplus\cdots\oplus f_s))(m)=g(f_1(m),\cdots,f_s(m))\\
&=r_1f_1(m)+\cdots+r_sf_s(m)=(r_1f_1+\cdots+r_sf_s)(m)
\end{align*}
Thus $$\Hom_R(M,g)(f_1,\cdots,f_s)=r_1f_1+\cdots+r_sf_s=(r_1,\cdots,r_s)\begin{pmatrix}
f_1\\
\vdots\\
f_s
\end{pmatrix}=A\begin{pmatrix}
f_1\\
\vdots\\
f_s
\end{pmatrix}$$
This shows that the map $\Hom_R(M,g):\Hom_R(M,N)^{\oplus s}\to \Hom_R(M,N)$ can be given by the same matrix multiplication $A$, whose entries lie in $I$.
\end{proof}

We conclude this section with the following observation that will be crucial in proving one of our main technical results.

\begin{lem}\label{3} Let $X$ be a submodule of an $R$-module $N$. Then, for any $R$-module $M$, we have $$\ker \left(\Hom_R(M,N)\xrightarrow{\Hom_R(M,\pi)} \Hom_R(M,N/X)\right)=\Hom_R(M,X)$$  
\end{lem}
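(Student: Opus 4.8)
The plan is to invoke left-exactness of the functor $\Hom_R(M,-)$ applied to the canonical short exact sequence attached to the submodule $X\subseteq N$, and then to unwind the resulting identification of terms.

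First I would record the short exact sequence $0\to X\xrightarrow{\iota} N\xrightarrow{\pi} N/X\to 0$, where $\iota$ denotes the inclusion and $\pi$ the quotient map. Applying the (left-exact) functor $\Hom_R(M,-)$ produces the exact sequence
$$0\to \Hom_R(M,X)\xrightarrow{\Hom_R(M,\iota)}\Hom_R(M,N)\xrightarrow{\Hom_R(M,\pi)}\Hom_R(M,N/X).$$
Exactness at the left-hand term says that $\Hom_R(M,\iota)$ is injective, so it identifies $\Hom_R(M,X)$ with a submodule of $\Hom_R(M,N)$ — concretely, with the set of those $R$-homomorphisms $M\to N$ whose image is contained in $X$ — and this is the sense in which the asserted equality is to be read. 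Exactness at the middle term then gives exactly $\ker\Hom_R(M,\pi)=\operatorname{Im}\Hom_R(M,\iota)=\Hom_R(M,X)$, which is the claim.

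Alternatively, and perhaps more transparently, one can argue directly on elements: for $f\in\Hom_R(M,N)$ the map $\Hom_R(M,\pi)(f)$ is the composite $\pi\circ f$, and $\pi\circ f=0$ if and only if $\operatorname{Im}(f)\subseteq\ker(\pi)=X$, i.e. if and only if $f$ factors (necessarily uniquely) through $\iota$; the latter condition is precisely that $f$ lies in the image of $\Hom_R(M,\iota)$, which we have identified with $\Hom_R(M,X)$. Either way one only uses the elementary facts that $\Hom_R(M,\pi)(f)=\pi\circ f$ and that a morphism into $N$ factors through $X$ exactly when its image lands in $X$.

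There is no genuine obstacle in this proof; the single point that deserves a word of care is the identification of $\Hom_R(M,X)$ with a subobject of $\Hom_R(M,N)$, which is legitimate precisely because $\iota$ is a monomorphism and hence $\Hom_R(M,\iota)$ is injective. I would state this identification explicitly so that the equality in the lemma is unambiguous, and then the proof is a two-line consequence of left-exactness of $\Hom$.
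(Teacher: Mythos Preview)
Your proof is correct and follows essentially the same approach as the paper: both deduce the claim from the left-exactness of $\Hom_R(M,-)$ applied to $0\to X\to N\to N/X\to 0$. You are simply more explicit about the identification of $\Hom_R(M,X)$ as a submodule of $\Hom_R(M,N)$ and add an alternative elementwise argument, but the core idea is identical.
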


\begin{proof} This follows from the fact that $0\to \Hom_R(M,X) \xrightarrow{i} \Hom_R(M,N)\xrightarrow{\Hom_R(M,\pi)} \Hom_R(M,N/X)$ is exact.  
\end{proof} 

\begin{lem}\label{nak} Let $M, N$ be $R$-modules such that $N$ is finitely generated. If $\Hom_R(M,N)=J(R)\Hom_R(M,N)$, then $\Hom_R(M,N)=0$. 
\end{lem}

\begin{proof}
Assume that, $\Hom_R(M,N)\neq 0$. Then there exists $f\in\Hom_R(M,N)$ and $x\in M$ such that $f(x)\neq 0$. Define $\phi:\Hom_R(M,N)\to N$ by $\phi(\psi)=\psi(x)$. Let $L:=\Im(\phi)$ and $K:=\Ker(\phi)$. Since $0\neq f(x)\in L$, so $L\neq 0$. Now we have, $\frac{\Hom_R(M,N)}{K}\cong L$, which implies $\frac{\Hom_R(M,N)}{K}\otimes_R\frac{R}{J(R)}\cong L\otimes_R\frac{R}{J(R)}$. This implies $\frac{\Hom_R(M,N)}{K+J(R)\Hom_R(M,N)}\cong\frac{L}{J(R)L}$. Since $\Hom_R(M,N)=J(R)\Hom_R(M,N)$, so $\frac{\Hom_R(M,N)}{K+J(R)\Hom_R(M,N)}=0$. Hence $L=J(R)L$. Now $L$ is a submodule of the finitely generated $R$-module $N$ and $R$ is Noetherian. Hence $L$ is finitely generated, so by Nakayama's lemma $L=J(R)L$ implies $L=0$, which is a contradiction. Therefore, $\Hom_R(M,N)=0$.
\end{proof}

\section{when are trace ideals not contained in ideals of finite projective or injective dimension?}\label{secmain} 

In this section, we prove our main results about obstructions of containment of trace ideals inside ideals of finite projective or injective dimension. Most of such results will be deduced from the main technical result Theorem \ref{4}. 

\begin{thm}\label{4} Let $I$ be an ideal of  $R$. Let $N$ be a finitely generated $R$-module such that $\Tor^R_{1}(N,R/I)=0$. Let $M$ be an $R$-module such that one of the following conditions hold:

\begin{enumerate}[\rm(1)]
    \item $\Tor^R_{2}(N,R/I)=0$, and $\Ext^1_R(M,N\otimes_R \syz_R I)=0$, where $\syz_R I$ can be taken to be the kernel of some $R$-linear map $R^{\oplus n}\to R$ whose image is $I$.   

    \item $M$ is finitely generated and $\Tor^R_1(\Tr M, IN)=0$. 
\end{enumerate}

 Then we have $\Hom_R(M,IN)= I\Hom_R(M,N)$. So, if moreover $\tau_N(M)\subseteq IN$, and $I\subseteq J(R)$, then $\Hom_R(M,N)=0$.    
\end{thm}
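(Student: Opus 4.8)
The plan is to establish the identity $\Hom_R(M,IN) = I\Hom_R(M,N)$ first, and then deduce the vanishing statement from the Nakayama-type argument using $I \subseteq J(R)$. Note that one inclusion is free: $I\Hom_R(M,N) \subseteq \Hom_R(M,IN)$ always holds, since if $f \colon M \to N$ is $R$-linear and $a \in I$, then $af$ maps $M$ into $IN$. So the content is the reverse inclusion $\Hom_R(M,IN) \subseteq I\Hom_R(M,N)$, and then the two cases (1) and (2) are just two different sufficient conditions for it.

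For the reverse inclusion, I would work with the exact sequence $0 \to IN \to N \to N/IN \to 0$ and the presentation of $I$. Writing $\syz_R I = \ker(R^{\oplus n} \xrightarrow{\varphi} R)$ with $\im \varphi = I$, tensoring with $N$ gives a complex whose homology involves $\Tor_1^R(N,R/I)$; the hypothesis $\Tor_1^R(N,R/I) = 0$ should let me identify $IN$ with $N^{\oplus n}/(\syz_R I \otimes_R N)$ — more precisely, it gives an exact sequence $(\syz_R I)\otimes_R N \to N^{\oplus n} \to IN \to 0$ where the first map need not be injective, but injectivity on the relevant piece is exactly what $\Tor_1 = 0$ buys (one has to be a little careful here: $\Tor_1^R(N, R/I) = 0$ says $I \otimes_R N \to N$ is injective, i.e. $I \otimes_R N \cong IN$, and from the presentation $\syz_R I \otimes N \to N^{\oplus n} \to I \otimes_R N \to 0$ one gets the presentation of $IN$). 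Then $\Hom_R(M, -)$ applied to $N^{\oplus n} \to IN \to 0$, combined with Lemma \ref{3} (to identify kernels of induced maps on Hom with Hom into submodules) and Lemmas \ref{1}, \ref{2} (to track that the relevant maps have entries in $I$ since $\varphi$ does), should give $\Hom_R(M, IN) = \im(\Hom_R(M,\varphi\otimes N)) = I \Hom_R(M, N^{\oplus n}) \cap (\text{something})$, and the obstruction to getting this to be exactly $I\Hom_R(M,N)$ is controlled by a $\Tor_2$ or $\Ext^1$ term — this is where cases (1) and (2) diverge. In case (1), the extra vanishing $\Tor_2^R(N,R/I) = 0$ makes $\syz_R I \otimes_R N$ behave like $(\syz_R I)N$ inside $N^{\oplus n}$ without lower homology, and $\Ext^1_R(M, N \otimes_R \syz_R I) = 0$ ensures $\Hom_R(M,-)$ stays right-exact across the sequence $0 \to N\otimes \syz_R I \to N^{\oplus n} \to IN \to 0$; in case (2) one instead uses the defining exact sequence $0 \to M^* \to F_0^* \to F_1^* \to \Tr M \to 0$ of the transpose together with the standard isomorphism $\Hom_R(M,N) \cong \ker(\Tr M \otimes$-type functorial identification) — specifically $\Tor_1^R(\Tr M, IN) = 0$ feeds into the exact sequence relating $\Hom_R(M, IN)$, $IN \otimes_R (\text{resolution data})$, and lets one commute $\Hom_R(M,-)$ with the inclusion $IN \hookrightarrow N$ modulo the $I$-multiple.

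Once $\Hom_R(M,IN) = I\Hom_R(M,N)$ is in hand, the final sentence is quick: if $\tau_N(M) \subseteq IN$, then by definition $\tau_N(M) = \sum_{f \in \Hom_R(M,N)} f(M)$, so every $f \in \Hom_R(M,N)$ has image contained in $IN$, i.e. $f \in \Hom_R(M,IN) = I\Hom_R(M,N)$. Therefore $\Hom_R(M,N) = I\Hom_R(M,N)$, and since $M$ is finitely generated and $R$ is Noetherian, $\Hom_R(M,N)$ is a finitely generated $R$-module; with $I \subseteq J(R)$, Nakayama's lemma forces $\Hom_R(M,N) = 0$.

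The main obstacle I anticipate is the bookkeeping in the reverse inclusion: carefully setting up the right exact sequences from the presentation of $I$, correctly locating which $\Tor$ vanishing kills which obstruction, and — in case (2) — correctly invoking the transpose exact sequence \eqref{eqn:es-M*-Tr-M} and the standard natural map $IN \otimes_R \Tr M \to$ (cokernel data) so that $\Tor_1^R(\Tr M, IN) = 0$ precisely yields $\Hom_R(M,IN) \subseteq I\Hom_R(M,N)$. The role of Lemmas \ref{1} and \ref{2} is to guarantee that after applying $\Hom_R(M,-)$ the connecting maps are still given by matrices with entries in $I$, so that the image is literally $I$ times a Hom-module rather than merely contained in it; getting that identification exactly right, rather than up to an inclusion, is the delicate point.
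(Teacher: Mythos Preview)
Your approach is correct and matches the paper's proof essentially step for step, including the use of Lemmas \ref{1}--\ref{3} and the split into cases (1) and (2) for the surjectivity of $\Hom_R(M,N^{\oplus n}) \to \Hom_R(M,IN)$. One minor clarification on the worry you flag at the end: you do not need the equality $\im\bigl(\Hom_R(M,\widetilde f)\bigr) = I\Hom_R(M,N)$, only the inclusion $\subseteq$ (which is exactly what Lemma~\ref{2} delivers), since exactness already gives $\im\bigl(\Hom_R(M,\widetilde f)\bigr) = \ker\bigl(\Hom_R(M,\pi)\bigr) = \Hom_R(M,IN)$ by Lemma~\ref{3}, and combining these two yields $\Hom_R(M,IN) \subseteq I\Hom_R(M,N)$.
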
  

\begin{proof} Clearly, $I\Hom_R(M,N)\subseteq \Hom_R(M,IN)$, so we only have to show $\Hom_R(M,IN)\subseteq I\Hom_R(M,N)$. 
We have an exact sequence $0\to \syz_R I\to R^{\oplus n}\xrightarrow{f} R\xrightarrow{\pi} R/I\to 0$ with $f$ having entries in $I$ (since $\operatorname{Im}(f)=\operatorname{ker}(\pi)=I$ and a matrix representation of $f$ is given by $(f(e_1)\cdots f(e_n)))$. This breaks apart into short exact sequences $\gamma:\text{ }0\to \syz_R I\to R^{\oplus n} \xrightarrow{g} I\to 0$ and $\sigma:\text{ }0\to I \xrightarrow{h} R\xrightarrow{\pi} R/I\to 0$, with $f=h\circ g$. 

Now for both cases (1) and (2), we first show that we get an exact sequence 
\begin{align}\label{exact}
\Hom_R(M,N^{\oplus n})\xrightarrow{\Hom_R(M,\widetilde f)} \Hom_R(M,N)\xrightarrow{\Hom_R(M,\pi)} \Hom_R(M,N/IN)
\end{align}

(1)  Since $\Tor^R_1(N,R/I)=0=\Tor^R_1(N,I)$, so tensoring $\gamma$ and $\sigma$ with $N$ we get exact sequences $0\to N\otimes_R \syz_R I\to N^{\oplus n} \xrightarrow{g\otimes \id_N} I\otimes_R N\to 0$ and $0\to I \otimes_R N \xrightarrow{h\otimes \id_N} N\xrightarrow{\pi} N/IN\to 0$, with $f\otimes \id_N=(h\otimes \id_N)\circ(g\otimes \id_N)$ and $\pi$ is still the canonical surjection. Hence we get an exact sequence 
$0\to N\otimes_R \syz_RI \to N^{\oplus n}\xrightarrow{\widetilde f}N\xrightarrow{\pi} N/IN \to 0$, where $\widetilde f:=f\otimes \id_N$ is given by a matrix multiplication with entries in $I$ (by Lemma \ref{1}). As $\im \widetilde f=\ker \pi=IN$, so we can break it apart into two short exact sequences $\alpha:\text{ }0\to N\otimes_R \syz_RI \to N^{\oplus n} \xrightarrow{\theta} IN \to 0$ and $\beta:\text{ }0\to IN \xrightarrow{\phi}  N \xrightarrow{\pi}N/IN\to 0$, with $\widetilde f=\phi\circ \theta$. By applying $\Hom_R(M,-)$ to the sequences $\beta$ and $\alpha$ we get exact sequences $$0\to \Hom_R(M,IN)\xrightarrow{\Hom_R(M,\phi)}\Hom_R(M,N)\xrightarrow{\Hom_R(M,\pi)} \Hom_R(M,N/IN)$$  \begin{align*}
&0\to \Hom_R(M,N\otimes_R \syz_RI)\to \Hom_R(M,N^{\oplus n})\xrightarrow{\Hom_R(M,\theta)} \Hom_R(M,IN)\to 0
\end{align*}
Since $\Hom_R(M,\widetilde f)=\Hom_R(M,\phi)\circ \Hom_R(M,\theta)$, hence we get an exact sequence $$0\to \Hom_R(M,N\otimes_R \syz_RI)\to \Hom_R(M,N^{\oplus n})\xrightarrow{\Hom_R(M,\widetilde f)} \Hom_R(M,N)\xrightarrow{\Hom_R(M,\pi)} \Hom_R(M,N/IN)$$

(2)  Since $\Tor^R_1(N,R/I)=0$, so tensoring $\gamma$ and $\sigma$ with $N$ we get exact sequences $0\to L\to N^{\oplus n} \xrightarrow{g\otimes \id_N} I\otimes_R N\to 0$ and $0\to I \otimes_R N \xrightarrow{h\otimes \id_N} N\xrightarrow{\pi} N/IN\to 0$, where $L$ is some $R$-module, and $f\otimes \id_N=(h\otimes \id_N)\circ(g\otimes \id_N)$ and $\pi$ is still the canonical surjection. Hence we get an exact sequence 
$0\to L \to N^{\oplus n}\xrightarrow{\widetilde f}N\xrightarrow{\pi} N/IN \to 0$, where $\widetilde f:=f\otimes \id_N$ is given by a matrix multiplication with entries in $I$ (by Lemma \ref{1}). As $\im \widetilde f=\ker \pi=IN$, so we can break it apart into two short exact sequences $\alpha:\text{ }0\to L \to N^{\oplus n} \xrightarrow{\theta} IN \to 0$ and $\beta:\text{ }0\to IN \xrightarrow{\phi}  N \xrightarrow{\pi}N/IN\to 0$, with $\widetilde f=\phi\circ \theta$. By applying $\Hom_R(M,-)$ to the sequence $\beta$ we get exact sequence $$0\to \Hom_R(M,IN)\xrightarrow{\Hom_R(M,\phi)}\Hom_R(M,N)\xrightarrow{\Hom_R(M,\pi)} \Hom_R(M,N/IN)$$ By using \cite[Lemma 3]{blms} and \cite[Proposition 12.9]{lw}, from the sequence $\alpha$ we get an exact sequence \begin{align*}
&0\to \Hom_R(M,L)\to \Hom_R(M,N^{\oplus n})\xrightarrow{\Hom_R(M,\theta)} \Hom_R(M,IN)\to 0
\end{align*}

Since $\Hom_R(M,\widetilde f)=\Hom_R(M,\phi)\circ \Hom_R(M,\theta)$, hence we get an exact sequence $$0\to \Hom_R(M,L)\to \Hom_R(M,N^{\oplus n})\xrightarrow{\Hom_R(M,\widetilde f)} \Hom_R(M,N)\xrightarrow{\Hom_R(M,\pi)} \Hom_R(M,N/IN)$$

This finally proves the existence of the exact sequence \ref{exact} for both cases (1) and (2).

Notice that, $\Hom_R(M,\widetilde f)$ can be given by a matrix multiplication with entries in $I$ (by Lemma \ref{2}). Thus $\im\left(\Hom_R(M,N^{\oplus n})\xrightarrow{\Hom_R(M,\widetilde f)} \Hom_R(M,N) \right) \subseteq I\Hom_R(M,N)$.
However, $$\im\left(\Hom_R(M,N^{\oplus n})\xrightarrow{\Hom_R(M,\widetilde f)} \Hom_R(M,N) \right)=\ker \left(\Hom_R(M,N)\xrightarrow{\Hom_R(M,\pi)} \Hom_R(M,N/IN)\right)$$ which is $\Hom_R(M,IN)$ by Lemma \ref{3}.  Thus, $\Hom_R(M,IN)\subseteq I\Hom_R(M,N)$. Now if $\tau_N(M)\subseteq IN$, then $\Hom_R(M,N)=\Hom_R(M,\tau_N(M))=\Hom_R(M,IN)$. Hence we get, $\Hom_R(M,N)\subseteq I\Hom_R(M,N)$. So, assuming $I\subseteq J(R)$, by Lemma \ref{nak}, we have $\Hom_R(M,N)=0$.  
\end{proof}

For our first consequence of Theorem \ref{4}, we record the following lemma. 

\begin{lem}\label{extpd} Let  $M,N$ be $R$-modules such that $N$ is finitely generated, and $\pd_R(N)<\infty$ and $\Ext^{1 \le i \le \pd_R(N)+1}_R(M,R)=0$. Then $\Ext^1_R(M,N)=0$.
\end{lem}

\begin{proof} We will prove this by inducting on $n=\pd_R(N)$. If $n=0$, then $N$ is projective. Hence it is a direct summand of a finitely generated free $R$-module. So, the statement is obvious in this case. Next, let the statement be true for $n\le k$. Now let, $n=k+1$ and assume $\Ext^{1 \le i \le k+2}_R(M,R)=0$. Now consider a short exact sequence $0\to \Omega_RN\to F \to N\to 0$, where $F$ is a finitely generated free $R$-module. Since $\pd_R N=k+1\geq 1$, hence $\pd_R(\syz_R N)\leq \pd_R(N)-1=k$ by \cite[Exercise 8.7(ii)]{rotman}. Now apply $\Hom_R(M,-)$ on this short exact sequence and consider the following part of the long exact sequence for all $i$: 
\begin{align}\label{ext}
\cdots\to\Ext^i_R(M,F)\to\Ext^i_R(M,N)\to  \Ext^{i+1}_R(M,\Omega_RN) \to \cdots
\end{align}

As $F$ is a finitely generated free $R$-module, hence $\Ext^{1 \le i \le k+2}_R(M,R)=0$ implies $\Ext^{1 \le i \le k+2}_R(M,F)=0$.
Also, $\Ext^{1 \le i \le k+2}_R(M,R)=0$ implies that $\Ext^{1 \le i \le k+1}_R(\syz_R M,R)=0$. Hence by applying the induction hypothesis we get that, $0=\Ext^{1}_R(\syz_R M,\Omega_RN)\cong \Ext^2_R(M,\syz_R N)$. Hence from (\ref{ext}) we get that, $\Ext^{1 }_R(M,N)=0$. Thus the statement is true for $n=k+1$ provided it is true for $n\le k$. Hence by induction we get that the statement is true for all $n$.
\end{proof}

\begin{rem}\label{hr2.2} We notice that one does not need $J\neq R$ in \cite[Corollary 2.2(b)]{hr}. This will be used in the following theorem. 
\end{rem}

\begin{thm}\label{big} Let $R$ be a semi-local ring , $I\subseteq J(R)$ be an ideal of $R$, and $M$ be an $R$-module. Let $\pd_R(I)\leq h<\infty$ and $\tr_R(M)\subseteq I$. Assume either $R$ is local, or $I$ contains a non-zero-divisor. If $\Ext_R^{1\leq i\leq h}(M,R)=0$, then $M^*=0$, i.e., $\tr_R(M)=0$.


\end{thm} 

\begin{proof}  We may assume $I\neq 0$. First, assume $\pd_R(I)=0$. Then $I$ is projective. If $R$ is local, then $I$ is free. If $I$ contains a non-zero-divisor, then $I_{\m}$ is a non-zero ideal of $R_{\m}$ for every $\m \in \text{Max}\spec(R)$. Hence every $I_{\m}$ is free of rank $1$ over $R_{\m}$, so $I$ is free by \cite[Lemma 1.4.4.]{bh}.  In either case, $I$ is free. Hence $I=fR$ for some non-zero-divisor $f\in J(R)$. Then $\tr_R(M)\subseteq fR$. Put $L:=\frac 1 f \tr_R(M)$. Then $L\subseteq R$ is an $R$-submodule of $R$. Hence $L$ is an ideal of $R$. Now $\tr_R(M)=fL$ and $f\in J(R)$ implies $\tr_R(M)=0$ in view of Remark \ref{hr2.2}. Next, assume  $\pd_R(I)\geq 1$. Consider a $\syz_R I$ in the sense of Theorem \ref{4}(1). Then $\pd_R(\syz_R I)\le \pd_R(I)-1\le h-1$ by \cite[Exercise 8.7(ii)]{rotman}. Now $\Ext_R^{1\leq i\leq {(h-1)}+1}(M,R)=0$ implies $\Ext^1_R(M, \syz_R I)=0$ in view of Lemma \ref{extpd}. Now $M^*=0$ follows from the $N=R$ case of Theorem \ref{4}(1).   

    
\end{proof}

We record the following remark in regards to the hypothesis of $I$ containing a non-zero-divisor in Theorem \ref{big}. 

\begin{rem} Let $I$ be an ideal of $R$. Assume $\pd_R(I)<\infty$. Then $I$ contains a non-zero-divisor of $R$ if and only if $I_{\m}\neq 0$ for all $\m \in \Max \spec(R)$. Indeed, let $I$ contains a non-zero-divisor of $R$, say $c$. Then $\frac{c}{1}\in I_{\m}$ for all $\m \in \Max \spec(R)$. Now $\frac{c}{1}\neq 0$ in $I_{\m}$ for all $\m \in \Max \spec(R)$. Indeed, if $\frac{c}{1}=0$ in $I_{\m}$ for some $\m \in \Max \spec(R)$, then $\frac{c}{1}=\frac{0}{1}$. This implies $cs=0$ for some $s\in R\setminus\m$, which means $s=0$, which is a contradiction as $s\in R\setminus \m$. Thus $\frac{c}{1}\neq 0$ in $I_{\m}$ for all $\m \in \Max \spec(R)$, so $I_{\m}\neq 0$ for all $\m \in \Max \spec(R)$. Conversely, let $I_{\m}\neq 0$ for all $\m \in \Max \spec(R)$. Now $\pd_R(I)<\infty$ implies $\pd_{R_{\m}}(I_{\m})<\infty$ for all $\m \in \Max \spec(R)$, which implies $\pd_{R_{\m}}(R_{\m}/I_{\m})<\infty$ for all $\m \in \Max \spec(R)$. Now $\ann_{R_{\m}}(R_{\m}/I_{\m})=I_{\m}\neq 0$ for all $\m \in \Max \spec(R)$, so by \cite[Proposition 6.2]{Ab} 
we get that $I_{\m}$ contains a non-zero-divisor of $R_{\m}$ for all $\m \in \Max \spec(R)$. Hence by \ref{nw} we get that, $\ann_{R_{\m}}(I_{\m})=0$ for all $\m \in \Max \spec(R)$. Now $(\ann_R(I))_{\m}=\ann_{R_{\m}}(I_{\m})$ for all $\m \in \Max \spec(R)$. Thus we have $(\ann_R(I))_{\m}=0$ for all $\m \in \Max \spec(R)$, so $\supp(\ann_R(I))=\emptyset$. Hence $\ann_R(I)=0$, so \ref{nw} implies that $I$ contains a non-zero-divisor of $R$.
\end{rem}

Next, we aim towards giving a consequence of Theorem \ref{4} for trace ideals of big Cohen--Macaulay modules.  
The following lemma is probably well-known, but we record a proof for the lack of an exact reference. In the following, we use the notion of big Cohen-Macaulay modules as in \cite[3.1]{bcm}. For basics of complexes and derived functors in the context of commutative algebra, we refer the reader to \cite[Appendix]{ch}. 

\begin{lem}\label{bigcm} Let $(R,\m)$ be a complete local Cohen--Macaulay ring. Let $M$ be a big Cohen--Macaulay $R$-module. Then $\Ext_R^{>0}(M,D)=0$ and $\Hom_R(M,D)\neq 0$, where $D$ is a dualizing module of $R$.   
\end{lem}  

\begin{proof} Let $d:=\dim R$, and $E$ be the injective hull of  $R/\m$.  We know  $\mathbf R\Gamma_{\m}(D)\cong \Sigma^{-d} E$ (\cite[3.4]{bcm}). By \cite[Example 7.3.2(d), Theorem 9.1.3(b), Theorem 7.5.12]{ss} we get 

$$\mathbf L \Lambda^{\m} (D)\cong \mathbf L \Lambda^{\m} (\mathbf R\Gamma_{\m}(D)) \cong \mathbf R \Hom_R(C, \Sigma^{-d} E)$$


where $C$ is the \v{C}ech complex on a minimal generating set of $\m$. Since $R$ is complete, hence $D$ is $\m$-adically complete by \cite[Theorem 3.3.5(c)]{bh}. Thus  $D\cong \mathbf L \Lambda^{\m} (D)$ by \cite[Corollary 2.5.8, Corollary 7.5.13(a)]{ss}. Now we get 

$$\mathbf R\Hom_R(M,D)\cong \mathbf R\Hom_R(M,\mathbf R \Hom_R(C, \Sigma^{-d} E))\cong \mathbf R\Hom_R(M\otimes_R^{\mathbf L}C, \Sigma^{-d}E)\cong \Sigma^{-d}\mathbf R\Hom_R(\mathbf R\Gamma_{\m}(M), E)$$


where in the second isomorphism, we have used \cite[A.4.21]{ch}, and in the third isomorphism, we have used \cite[A.2.1.1]{ch} and the last equality of \cite[Section 2.1]{chiy}. By \cite[1.1, 3.1(3)]{bcm} we get, $\mathbf R\Gamma_{\m} (M)\cong \Sigma^{-d} \H^d_{\m} (M)$. Hence using \cite[A.2.1.3]{ch} we get, 

$$\mathbf R\Hom_R(M,D)\cong  \Sigma^{-d}\Sigma^d\mathbf R\Hom_R(\H^d_{\m} (M), E)\cong \mathbf R\Hom_R(\H^d_{\m} (M), E)\cong \Hom_R(\H^d_{\m} (M), E)$$


where the last isomorphism follows because $E$ is an injective module. Now comparing homologies we get, $\Ext_R^{>0}(M,D)=0$ and $\Hom_R(M,D)\cong \Hom_R(\H^d_{\m} (M), E)$. By \cite[3.1(3)]{bcm} we know $\H^d_{\m} (M) \neq 0$. Hence from \cite[1.4.8]{ss} we get that, $\Hom_R(\H^d_{\m} (M), E)$ is also non-zero. Thus $\Hom_R(M,D)\neq 0$.  
\end{proof}

The hypothesis of $R$ being complete cannot be dropped from Lemma \ref{bigcm} as is shown by the following remark. 

\begin{rem} If $(R,\m)$ is a Cohen--Macaulay local ring, admitting a dualizing module $D$, such that $\Ext^1_R(M,D)=0$ for every balanced big Cohen--Macaulay module $M$, then $R$ is complete. Indeed, we first notice by \cite[Proposition 2.4, Remark 4.4]{holm} that $F\oplus R$ is balanced big Cohen--Macaulay for every flat $R$-module $F$. In particular, if $x_1,\cdots,x_d$ is an $\m$-primary ideal, then the direct sum of $R$ and all the $R_{x_i}$ s is a balanced big Cohen--Macaulay module, hence the required Ext vanishing  forces $D$ to be complete by \cite[Theorem 1.1]{sch}. Hence $D\cong D\otimes_R \widehat R$, which implies $R\cong \Hom_R(D,D)\cong \Hom_R(D, D\otimes_R \widehat R)\cong \Hom_R(\Hom_R(D,D),\widehat R)\cong \Hom_R(R,\widehat R)\cong \widehat R$, i.e., $R$ is complete.   
\end{rem}

    

We need one more lemma before proving our main result (Theorem \ref{bigmac}) on trace ideals of big Cohen--Macaulay modules. 

\begin{lem}\label{injorth} Let $(R,\m)$ be a complete local Cohen--Macaulay ring. If $N$ is a finitely generated $R$-module of finite injective dimension and $M$ is a big Cohen--Macaulay $R$-module, then $\Ext_R^{>0}(M,N)=0$. 
\end{lem}

\begin{proof} Let $\omega$ be a canonical module of $R$. By \cite[3.3.28(b)]{bh}, there exists  a finite $\omega$-resolution of $N$ $0\to \omega^{\oplus b_n}\to \cdots \to \omega^{\oplus b_0}\to N\to 0$ for some non-negative integers $b_0,\cdots,b_n$ and $n$. We prove the claim by induction on $n$, the length of $\omega$-resolution. If $n=0$ we are done by Lemma \ref{bigcm}.   Now assume $n\geq 1$. We then have exact sequences $0\to \omega^{\oplus b_n}\to \cdots \to \omega^{\oplus b_1}\to N'\to 0$ and $0\to N'\to \omega^{\oplus b_0}\to N\to 0$. By induction hypothesis, $\Ext^{>0}_R(M,N')=0$.  Then by the long exact sequence of Ext after applying $\Hom_R(M,-)$ to $0\to N'\to \omega^{\oplus b_0}\to N\to 0$, and using Lemma \ref{bigcm}, it follows that $\Ext^{>0}_R(M,N)=0$. This finishes the inductive step, hence the proof.    
\end{proof}

\begin{thm}\label{bigmac} Let $(R,\m)$ be a complete local Cohen--Macaulay  ring. Let $M$ be a big Cohen--Macaulay $R$-module and $I\subseteq \m$ be an ideal of $R$ with $\pd_R I<\infty$. If $\omega$ is a dualizing module of $R$, then $\tau_{\omega}(M)\nsubseteq I\omega$. In particular, if $R$ is Gorenstein, then $\tr_R(M)\nsubseteq I$. 
\end{thm}

\begin{proof} As $\pd_R (R/I)<\infty$ and $\omega$ is maximal Cohen--Macaulay, so $\Tor^R_{>0}(\omega, R/I)=0$ (see \cite[Definition 5.3.1, Theorem 5.3.10]{ch}). Since $\syz_R I$ has finite projective dimension, so $\omega\otimes_R \syz_R I$ has finite injective dimension (by \cite[Exercise 9.6.5]{bh}). Also, $\Ext^{>0}_R(M, \omega \otimes_R \syz_R I)=0$ by Lemma \ref{injorth}. Thus with $N=\omega$ in Theorem \ref{4}(1), and from Lemma \ref{bigcm} we get that, $\tau_{\omega}(M)\nsubseteq I\omega$.  
\end{proof}

For our next consequence of Theorem \ref{4}, we first recall that, a finitely generated $R$-module $M$ is said to satisfy Serre's condition $(\widetilde S_n)$ for some non-negative integer $n$ if $\depth_{R_{\p}} M_{\p} \ge \inf\{n, \depth R_{\p}\}$ for all $\p\in \spec(R)$.  We first record a remark about this Serre's condition $(\widetilde S_n)$.

\begin{rem}\label{stable} Let $M,N$ be two finitely generated $R$-modules such that $M\oplus F\cong N\oplus G$ for some finitely generated free $R$-modules $F, G$. If $M$ satisfies $(\widetilde S_n)$ for some non-negative integer $n$, then so does $N$. Indeed, let $\p\in\spec R$. Then we have $M_{\p}\oplus F_{\p}\cong N_{\p}\oplus G_{\p}$, which implies $$\inf\{\depth_{R_{\p}}M_{\p},\depth R_{\p}\}=\depth_{R_{\p}}(M_{\p}\oplus F_{\p})= \depth_{R_{\p}}(N_{\p}\oplus G_{\p})=\inf\{\depth_{R_{\p}}N_{\p},\depth R_{\p}\}$$ Now we have \begin{align*}
\depth_{R_{\p}}N_{\p}&\ge \inf\{\depth_{R_{\p}}N_{\p},\depth R_{\p}\}\\
&= \inf\{\depth_{R_{\p}}M_{\p},\depth R_{\p}\}\\
&\ge \inf\{\inf\{n, \depth R_{\p}\},\depth R_{\p}\}\text{ [Since }M\text{ satisfies } (\widetilde S_n)]\\
&=\inf\{n, \depth R_{\p}\}
\end{align*}
Hence $N$ satisfies $(\widetilde S_n)$ as well.
\end{rem}

Now the $h=0$ case of Proposition \ref{np} recovers \cite[Corollary 2.2(b)]{hr} and our proof of Proposition \ref{np} does not depend on \cite[Corollary 2.2(b)]{hr}.      

\begin{prop}\label{np} Let $I$ be an ideal of $R$ such that $ I\subseteq J(R)$, and $M$ be a finitely generated $R$-module. Let $\pd_R(I)\leq h<\infty$. 
Assume $\Tr_R M$ satisfies $(\widetilde S_h)$ and $\tr_R(M)\subseteq I$. Then $M^*=0$, i.e., $\tr_R(M)=0$, i.e., $M$ is torsion. If, moreover, $R$ is local, and  $h\geq 1\geq \depth R$, then $M=0$. 
\end{prop}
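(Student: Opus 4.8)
The plan is to reduce the statement to an application of Theorem~\ref{4} by finding a module $N$ for which the hypotheses of that theorem hold and which captures $\tr_R(M)$. The natural choice is $N = R$, since $\tau_R(M) = \tr_R(M) = M^*$ in the notation of the excerpt, and $IN = I \otimes_R R$-image simplifies to the ideal $I$ itself. With $N = R$ we have $\Tor_1^R(R, R/I) = 0$ automatically, so the standing hypothesis of Theorem~\ref{4} is free. What remains is to check that one of the two conditions (1) or (2) holds. Condition (1) with $N = R$ becomes: $\Tor_2^R(R, R/I) = 0$ (again free) and $\Ext^1_R(M, \syz_R I) = 0$. So the real content is to deduce $\Ext^1_R(M, \syz_R I) = 0$ from the hypotheses $\pd_R(I) \le h$ and ``$\Tr M$ satisfies $(\widetilde S_h)$''.

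\textbf{Handling the Ext vanishing.} Since $\pd_R I \le h < \infty$, a syzygy $\syz_R I$ taken as the kernel of $R^{\oplus n} \to R$ with image $I$ satisfies $\pd_R(\syz_R I) \le h - 1$ (and $\syz_R I$ is free, indeed $0$, when $h \le 1$). The key is the standard duality relating $\Ext^1_R(M, -)$ to the transpose: for any module $X$ there is an exact sequence (coming from \eqref{eqn:es-M*-Tr-M} and the change-of-rings / universal-coefficient machinery)
\[
0 \to \Ext^1_R(\Tr M, X) \to M \otimes_R X \to \Hom_R(M^*, X)
\]
or, more to the point here, the isomorphism $\Ext^1_R(M, X) \cong \Tor_1^R(\Tr M, X)$ when... — actually the clean route is: $\Ext^i_R(M, X) \cong \Ext^i_R(\Tr\Tr M, X)$ up to projective summands, and there is the well-known exact sequence expressing $\Ext^1$ in terms of $\Tr$. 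I would instead invoke the classical fact that $(\widetilde S_h)$ on $\Tr M$ together with $\pd_R X \le h - 1$ (here $X = \syz_R I$) forces $\Ext^1_R(M, X) = 0$; this is essentially the argument behind \cite[Corollary 2.2(b)]{hr} which the proposition explicitly generalizes. One proves by induction on $\pd_R X$: writing a short exact sequence $0 \to X' \to R^{\oplus a} \to X \to 0$ with $\pd_R X' = \pd_R X - 1$, the long exact sequence in $\Ext_R(M,-)$ reduces the claim to $\Ext^1_R(M, R^{\oplus a}) = 0$ and $\Ext^2_R(M, X')$-type terms; the vanishing of $\Ext^1_R(M,R) = \cdots = \Ext^h_R(M,R) = 0$, which is exactly what the $(\widetilde S_h)$ condition on $\Tr M$ delivers via the standard criterion (depth conditions on $\Tr M$ $\Leftrightarrow$ vanishing of low $\Ext^i_R(M,R)$), then closes the induction.

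\textbf{Concluding.} Once $\Ext^1_R(M, \syz_R I) = 0$ is in hand, Theorem~\ref{4}(1) with $N = R$ gives $\Hom_R(M, I) = I\Hom_R(M, R) = I M^*$; combined with the hypothesis $\tr_R(M) = M^* \subseteq I$ and $I \subseteq J(R)$, the theorem's conclusion yields $\Hom_R(M, R) = M^* = 0$, hence $\tr_R(M) = \tau_R(M) = 0$ by \ref{tau}, i.e.\ $M$ is torsion. For the last sentence: if $(R,\m)$ is local with $\depth R \le 1$ and $h \ge 1$, then $\depth R \le 1 \le h$, so the $(\widetilde S_h)$ hypothesis in particular forces enough depth on $\Tr M$ to make $M$ reflexive-ish; more directly, $M^* = 0$ together with $\depth R \le 1$ — one argues that a nonzero $M$ would have $M^* \ne 0$ when $\depth R \ge 1$ (a nonzero map $M \to R$ exists, e.g.\ because $\Ext^1_R(M,R)$-vanishing plus depth considerations prevent $M$ from being torsion while nonzero), contradiction; so $M = 0$. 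I would phrase this last step as: $M$ torsion and $\depth R \ge 1$ means $\Hom_R(M,R)=0$ is compatible, but the $(\widetilde S_1)$ condition on $\Tr M$ forces $M$ to be a second syzygy, hence torsionfree, hence (being torsion) zero.

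\textbf{Main obstacle.} The crux is the second paragraph: correctly extracting ``$\Ext^1_R(M,\syz_R I) = 0$'' from ``$\Tr M$ satisfies $(\widetilde S_h)$ and $\pd_R I \le h$.'' This requires pinning down the precise dictionary between Serre-type depth conditions on $\Tr M$ and vanishing of $\Ext^i_R(M,R)$ for $1 \le i \le h$ (the Auslander--Bridger / \cite{mas} correspondence), and then a clean dévissage along a length-$(h-1)$ projective resolution of $\syz_R I$. Everything else is a direct substitution $N = R$ into Theorem~\ref{4} plus Nakayama, which the theorem already packages.
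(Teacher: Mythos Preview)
There is a genuine gap in the main reduction. You aim to verify condition (1) of Theorem~\ref{4} with $N=R$, for which you need $\Ext^1_R(M,\syz_R I)=0$; your route to this is the assertion that ``depth conditions on $\Tr M$ $\Leftrightarrow$ vanishing of low $\Ext^i_R(M,R)$''. Only the implication $\Leftarrow$ holds in general (this is \cite[Proposition 11]{mas}, and it is precisely how Corollary~\ref{hah} is later deduced \emph{from} the present proposition, not the other way around). The direction you need, $\Rightarrow$, fails: over $R=k[x]/(x^2)$ with $M=k$ one has $\Tr M\cong k$, which satisfies $(\widetilde S_h)$ for every $h$ (the condition is vacuous since $\depth R=0$), yet $\Ext^i_R(k,R)\cong k\ne 0$ for all $i\ge 1$. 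Without $\Ext^{1\le i\le h}_R(M,R)=0$, your d\'evissage along a free resolution of $\syz_R I$ never starts, and condition (1) is not available.

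The paper avoids this by verifying condition (2) of Theorem~\ref{4} instead. With $N=R$ one needs $\Tor^R_1(\Tr M, I)=0$, and this comes directly from the depth-sensitivity result \cite[A.5(i), Remark A.6]{dck}: if $X$ satisfies $(\widetilde S_h)$ and $\pd_R Y\le h$, then $\Tor^R_{>0}(X,Y)=0$; take $X=\Tr M$ and $Y=I$. For the ``moreover'' clause the paper also argues through $\Tr M$ rather than $M$: from $M^*=0$ the sequence~\eqref{eqn:es-M*-Tr-M} gives $\pd_R(\Tr M)\le 1$, and Auslander--Buchsbaum together with $\depth(\Tr M)\ge \min\{h,\depth R\}$ (this is $(\widetilde S_h)$ read at the maximal ideal) forces $\Tr M$ free, hence $M$ free, hence $M=0$. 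Your sketch (``$(\widetilde S_1)$ on $\Tr M$ forces $M$ to be a second syzygy, hence torsionfree'') again leans on the unavailable direction of the Auslander--Bridger correspondence.
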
 

\begin{proof} We first notice that both the hypothesis and the conclusion of the first part localizes with respect to maximal ideals. Indeed the only non-trivial part to see here is that: (1)$\Tr_{R_{\m}} M_{\m}$ satisfies $(\widetilde S_h)$ for every $\m \in \text{Max}\spec(R)$ by \cite[Remarks (5), pp. 5789]{mas} and Remark \ref{stable}; and (2) the condition $\tr_R(M)\subseteq I$ localizes, which follows from \cite[Proposition 2.8(viii)]{lindo}. Hence, without loss of generality, we may assume $R$ is local. By \cite[A.5(i), Remark A.6]{dck} we have $0=\Tor^R_{>0}(\Tr M, I)$. Applying Theorem \ref{4}(2) with $N=R$, we get $M^*=0$. Finally, $M^*=0$ if and only if $M$ is torsion by \cite[Proposition 1.2.3]{bh}.

On the other hand, remember that $M^*=0$ implies $\pd_R(\Tr M)\leq 1$.  

 Now for the moreover part, assume $R$ is local with $\depth R\leq 1\leq h$. If $\depth R=0$, then by Auslander-Buchsbaum formula, $\Tr M$ is free, hence $M$ is free. If $\depth R=1$, then $\Tr M$ satisfying $(\widetilde S_h)$ yields $\depth_R (\Tr M)\geq \inf\{h,1\}\geq 1$ as $h\geq 1$. Then $1=\depth R=\depth(\Tr M)+\pd(\Tr M)\geq 1+\pd(\Tr M)$ yields $\Tr M$ is free, hence $M$ is free. Now $M^*=0$ implies $M=0$. 
\end{proof}

Now we can state and prove an obstruction result about containment of trace of a finitely generated module inside ideals of finite projective dimension, when the module satisfies certain Ext vanishing conditions. The $h=0$ case of Theorem \ref{hah}(1) recovers \cite[Corollary 2.2(b)]{hr}. Note that, when $M$ is finitely generated, Theorem \ref{hah}(1) is more general than Theorem \ref{big}.

\begin{thm}\label{hah} Let $I$ be an ideal of $R$ such that $ I\subseteq J(R)$, and $M$ be a finitely generated $R$-module. Let $\pd_R(I)\leq h<\infty$ and $\tr_R(M)\subseteq I$. Then the following hold true. 
\begin{enumerate}[\rm(1)]
    \item If $\Ext_R^{1\leq i\leq h}(M,R)=0$, then $M^*=0$, i.e., $\tr_R(M)=0$.
    \item If $(R,\m)$ is local, $\depth R=t$, and $\Ext_R^{1\leq i\leq t}(M,R)=0$, then $M=0$.
\end{enumerate} 
\end{thm}  

\begin{proof} (1) Since $\Ext_R^{1\leq i\leq h}(\Tr \Tr M,R)=0$, so $\Tr M$  satisfies $(\widetilde S_h)$ (see \cite[Definition 7, Proposition 11]{mas}). Now we are done by Proposition \ref{np}. 

(2)  Since $R$ is local, so by Auslander-Buchsbaum formula we have $\pd_R(I)\leq t$. Hence by (1) we get $M^*=0$. Thus $\Ext_R^{0\leq i\leq t}(M,R)=0$. 
If $M\neq 0$, then $\m \in \supp_R(M)$. Hence  \cite[Proposition 1.2.10(a), (e)]{bh} would imply that $\inf \{n: \Ext_R^n(M,R)\neq 0\}\leq \depth R=t$, contradiction! Thus $M=0$. 
\end{proof}

Now we present a variation of Theorem \ref{bigmac}, where we do not assume the ring is complete at the cost of working with only finitely generated modules.

\begin{cor}\label{6} Let $(R,\m)$ be a local Cohen--Macaulay ring. Let $I\subseteq \m$ be an ideal of $R$ of finite projective dimension. Let $M$ be a maximal Cohen--Macaulay $R$-module. Assume $R$ admits a canonical module $\omega$ such that $\tau_{\omega}(M)\subseteq I\omega$. Then $M=0$. 
\end{cor}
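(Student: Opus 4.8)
The plan is to reduce Corollary \ref{6} to Theorem \ref{4}(1) applied with $N=\omega$, the canonical module. The first step is to verify the $\Tor$-vanishing hypotheses: since $I$ has finite projective dimension over the Cohen--Macaulay local ring $R$, and $\omega$ is a maximal Cohen--Macaulay module, I would use depth/rigidity-type vanishing to conclude $\Tor^R_i(\omega, R/I) = 0$ for all $i \geq 1$. Concretely, if $\pd_R(R/I) = c < \infty$ then by Auslander--Buchsbaum $c = \depth R - \depth(R/I) \leq \depth R = \dim R$; tensoring a finite free resolution of $R/I$ against $\omega$ and chasing depths (or invoking the Auslander depth formula / the fact that $\omega$ is faithfully "nice" homologically) gives $\Tor^R_{>0}(\omega, R/I) = 0$. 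This simultaneously gives $\Tor^R_1(\omega, R/I) = 0$ and $\Tor^R_2(\omega, R/I) = 0$, which are the hypotheses on $N = \omega$ in Theorem \ref{4}(1).

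The second step is to check $\Ext^1_R(M, \omega \otimes_R \syz_R I) = 0$. Here I would use that $\syz_R I$ is a maximal Cohen--Macaulay module: since $\pd_R I < \infty$ over a Cohen--Macaulay ring and $I$ has finite projective dimension, all sufficiently high syzygies of $I$ (indeed the first syzygy of $I$ as a submodule of a free module, i.e.\ the second syzygy of $R/I$) are either zero or have maximal depth by the Auslander--Buchsbaum formula applied to $R/I$ — in fact $\syz_R I$ is a $c$-th or lower syzygy and is MCM (or free, or zero). Then $\omega \otimes_R \syz_R I$: since $\Tor^R_{>0}(\omega, \syz_R I) = 0$ (same vanishing as above, as $\syz_R I$ also has finite projective dimension), this tensor product is again maximal Cohen--Macaulay, and it is a module of the form $\omega \otimes (\text{finite free resolution truncation})$, so it is a "canonical-like" module; in any case $M$ being maximal Cohen--Macaulay and $\omega \otimes_R \syz_R I$ having depth $= \dim R$ means $\Ext^1_R(M, \omega \otimes_R \syz_R I) = 0$ by the standard fact that $\Ext^{>0}$ from an MCM module into a module of full depth vanishes when... — actually this needs care, so this is where I'd be most cautious (see below).

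With both bullet items of Theorem \ref{4}(1) verified for $N = \omega$, the theorem gives $\Hom_R(M, I\omega) = I \Hom_R(M, \omega)$; then since $\tau_\omega(M) \subseteq I\omega$ by hypothesis and $I \subseteq \m = J(R)$ (as $R$ is local), the final clause of Theorem \ref{4} yields $\Hom_R(M, \omega) = 0$. The last step is to conclude $M = 0$ from $\Hom_R(M, \omega) = 0$: this is the standard duality fact that over a Cohen--Macaulay local ring with canonical module $\omega$, for a maximal Cohen--Macaulay module $M$ one has $M^\vee := \Hom_R(M,\omega)$ is again MCM with $(M^\vee)^\vee \cong M$, so $\Hom_R(M,\omega) = 0$ forces $M = 0$ (alternatively, $\Hom_R(M,\omega) = 0$ with $M \neq 0$ MCM contradicts $\depth M = \dim R \geq 0$ and the non-vanishing of $\Hom$ into the canonical module for a module of full support).

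**The main obstacle** I anticipate is the vanishing $\Ext^1_R(M, \omega \otimes_R \syz_R I) = 0$. The cleanest route is: (a) show $\omega \otimes_R \syz_R I$ is maximal Cohen--Macaulay — which follows once $\Tor^R_{>0}(\omega, \syz_R I) = 0$ and $\syz_R I$ is MCM (or free/zero), via the depth lemma applied to the free resolution of $\syz_R I$ tensored with $\omega$; and (b) identify $\omega \otimes_R \syz_R I$ with an actual syzygy of $\omega \otimes (R/I)$-type module, or more simply observe that $\omega \otimes_R \syz_R I \cong \syz(\omega \otimes I)$-flavored, hence a high syzygy of $\omega$-modules, i.e.\ an MCM module admitting... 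Hmm — the honest subtlety is that $\Ext^1$ from an MCM module $M$ into another MCM module $X$ need not vanish in general (it does vanish if $X$ is injective-dimension-finite, or $X = \omega$ itself, but not for arbitrary MCM $X$). So I expect the real argument routes through case (2) of Theorem \ref{4} instead, or through Corollary \ref{hah} / Proposition \ref{np}: namely one should probably dualize — set $M' = M^\vee = \Hom_R(M,\omega)$ (MCM), note $\tau_\omega(M) = \tr$-type ideal relates to $\tr_R(M')$ via $\omega$-duality, reduce "$\tau_\omega(M) \subseteq I\omega$" to "$\tr_R(M^\vee) \subseteq I$", apply Corollary \ref{hah}(2) (using $\depth R = \dim R$ and the Ext-vanishing $\Ext^{1\le i\le t}_R(M^\vee, R) = 0$, which holds because $M^\vee$ is MCM and $R$ is CM so $\Ext^{>0}_R(M^\vee,R)=0$), conclude $M^\vee = 0$, hence $M = 0$. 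That dualization step — correctly translating $\tau_\omega(M) \subseteq I\omega$ into a statement about $\tr_R(M^\vee)$ and $I$ — is where I'd spend the most effort.
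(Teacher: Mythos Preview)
Your initial plan --- apply Theorem \ref{4}(1) with $N=\omega$ --- is exactly the paper's route, and your handling of the $\Tor$-vanishing and of the final step ($\Hom_R(M,\omega)=0 \Rightarrow M\cong \Hom_R(\Hom_R(M,\omega),\omega)=0$) is correct. The gap is precisely at the point where you hesitate and then abandon this route.

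You write that $\Ext^1_R(M,X)$ vanishes for MCM $M$ ``if $X$ is injective-dimension-finite'', and this is the key observation you are missing the chance to apply. Since $\syz_R I$ has finite projective dimension, the module $\omega\otimes_R \syz_R I$ has \emph{finite injective dimension} (this is the standard fact that $\omega\otimes_R(-)$ sends finite projective dimension to finite injective dimension over a CM local ring with canonical module; see \cite[Exercise 9.6.5]{bh}). Hence $\Ext^{>0}_R(M,\omega\otimes_R\syz_R I)=0$ for $M$ maximal Cohen--Macaulay by \cite[Exercise 3.1.24]{bh}, and Theorem \ref{4}(1) applies directly. Your attempt to argue via ``$\omega\otimes_R\syz_R I$ is MCM'' was aiming at the wrong target; MCM alone is not enough, but finite injective dimension is, and you had that fact in hand.

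Your proposed alternative via dualization has a genuine error: the claim that $\Ext^{>0}_R(M^\vee,R)=0$ for $M^\vee$ MCM over a CM ring $R$ is false unless $R$ is Gorenstein. (Over a non-Gorenstein CM ring, MCM modules can and do have nonvanishing $\Ext^i(-,R)$; indeed $\omega$ itself is MCM with $\Ext^{>0}_R(\omega,R)\neq 0$ in general.) So Corollary \ref{hah}(2) would not apply to $M^\vee$, and the detour collapses. Stay with Theorem \ref{4}(1) and use the finite-injective-dimension observation.
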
  

\begin{proof} As $\pd_R (R/I)<\infty$ and $\omega$ is maximal Cohen--Macaulay, so $\Tor^R_{>0}(\omega, R/I)=0$ (see \cite[Definition 5.3.1, Theorem 5.3.10]{ch}). Since $\syz_R I$ has finite projective dimension, so $\omega\otimes_R \syz_R I$ has finite injective dimension (by \cite[Exercise 9.6.5]{bh}). Since $M$ is maximal Cohen--Macaulay, so $\Ext^{>0}_R(M, \omega \otimes_R \syz_R I)=0$ by \cite[Exercise 3.1.24]{bh}. Thus with $N=\omega$ in Theorem \ref{4}(1), we get $\Hom_R(M,\omega)=0$. As $M$ is maximal Cohen--Macaulay, so we get $M\cong \Hom_R(\Hom_R(M,\omega),\omega)=0$. 
\end{proof}   

Now we record a series of preliminary observations needed for proving our obstruction result regarding containment in ideals of finite injective dimension.

\begin{chunk}\label{nw}Note that, for any ideal $J$ of a ring $R$, $J$ contains a non-zero-divisor of $R$ if and only if $\ann_R(J)=0$. Indeed, if $J$ contains a non-zero-divisor, then it is clear that $\ann_R(J)=0$. Conversely, assume $\ann_R(J)=0$. If $J$ consisted of zero-divisors, then $J\subseteq \p=\ann_R(x)$ for some $\p\in \Ass(R)$ and $0\neq x \in R$. But then, $0\neq x \in \ann_R(J)$.
\end{chunk}

Since an $R$-module $M$ is faithful if and only if the natural homothety $R\xrightarrow{r\to (x\mapsto rx)} \End_R(M)$ is injective, so we immediately get the following:  

\begin{chunk}\label{fc} Let $(R,\m)$ be local and $M$ be an $R$-module. Then $\ann_R(M)=0$ iff $\ann_{\widehat R}(\widehat M)=0$.
\end{chunk}   

Using \ref{fc} and \ref{nw}, we immediately get: 

\begin{chunk}\label{nzcomple} Let $I$ be an ideal of a local ring $(R,\m)$. Then $I$ contains a non-zero-divisor of $R$ if and only if $I\widehat R$ contains a non-zero-divisor of $\widehat R$.
\end{chunk}

\begin{chunk}\label{37} Let $0\neq M$ be a finitely generated module over a local ring $(R,\m)$ such that $\id_R M<\infty$. If $\ann_R(M)\neq 0$, then it contains a non-zero-divisor. 

\begin{proof} By Bass's theorem, $R$ is Cohen--Macaulay. By \ref{fc} and \ref{nzcomple}, we may assume $R$ is complete. Hence we may assume $R$ admits a canonical module, say $\omega$. Then $\pd_R \Hom_R(\omega, M)<\infty$ and $M\cong \omega \otimes_R \Hom_R(\omega, M)$ (by \cite[Exercise 9.6.5]{bh}).   Hence $0\neq \ann_R(M)=  \ann_R \Hom_R(\omega, M)$. Since $\ann_R \Hom_R(\omega, M)$ contains a non-zero-divisor by \cite[Proposition 6.2]{Ab}, hence so does  $\ann_R(M)$.

\end{proof}
\end{chunk}

\begin{lem}\label{38} Let $0\neq I$ be an ideal of a local ring $(R,\m)$ such that $\id_R I<\infty$.  Then $I$ contains a non-zero-divisor of $R$ and $R$ is generically Gorenstein. 
\end{lem}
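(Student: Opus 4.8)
The plan is to reduce to the complete case and then exploit the structure of ideals of finite injective dimension via a canonical module. First I would invoke \ref{fc} and \ref{nzcomple}, together with the fact that $\id_{\widehat R}(I\widehat R) = \id_R(I) < \infty$, to reduce the claim "$I$ contains a non-zero-divisor" to the case where $R$ is complete; and since finite injective dimension of a nonzero module forces $R$ to be Cohen--Macaulay by Bass's theorem, I may assume $R$ is a complete Cohen--Macaulay local ring, hence admits a canonical module $\omega$. (The statement "$R$ is generically Gorenstein" is not affected by completion in the sense that it suffices to check it after completion: $R_\p$ Gorenstein for all minimal $\p$ can be detected on $\widehat R$ since $\widehat R$ is faithfully flat and minimal primes of $\widehat R$ lie over minimal primes of $R$ — I would include a short remark to this effect, or alternatively note that generic Gorensteinness of $R$ follows once we know $I$ contains a non-zero-divisor by a separate argument; see below.)

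Next, with $\omega$ a canonical module in hand, the key move is the standard duality $I \cong \omega \otimes_R \Hom_R(\omega, I)$ with $\pd_R \Hom_R(\omega, I) < \infty$, exactly as used in \ref{37} via \cite[Exercise 9.6.5]{bh}. Writing $J := \Hom_R(\omega, I)$, a finitely generated module of finite projective dimension, I then observe $\ann_R(I) = \ann_R(\omega \otimes_R J)$. Since $I \neq 0$ and $\omega$ is faithful, $J \neq 0$. Now I would apply \ref{37} itself (or rather its proof ingredient \cite[Proposition 6.2]{Ab}): actually more directly, $\ann_R(\omega \otimes_R J) = \ann_R(J)$ because $\omega$ is a faithful module and tensoring, for a module of finite projective dimension over a Cohen--Macaulay ring, the annihilator is unchanged — alternatively one argues as in \ref{37} that $\ann_R(I)$ contains a non-zero-divisor by \cite[Proposition 6.2]{Ab} applied to $\Hom_R(\omega, I)$. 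The cleanest route: apply \ref{37} directly to the module $M = I$, which has $\id_R I < \infty$ by hypothesis; that gives that if $\ann_R(I) \neq 0$ then it contains a non-zero-divisor. So it remains to rule out $\ann_R(I) = 0$ being incompatible, or rather: by \ref{nw}, $I$ contains a non-zero-divisor if and only if $\ann_R(I) = 0$, and if $\ann_R(I) \neq 0$ then by \ref{37} it contains a non-zero-divisor $x$; but $x \in \ann_R(I)$ and $x$ a non-zero-divisor forces $I = 0$, contradicting $I \neq 0$. Hence $\ann_R(I) = 0$, and again by \ref{nw}, $I$ contains a non-zero-divisor.

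For the "generically Gorenstein" conclusion, I would localize: let $\p$ be a minimal prime of $R$. Then $R_\p$ is Artinian local, and $I_\p$ is an ideal of $R_\p$ of finite injective dimension (localization preserves finite injective dimension). Having shown $I$ contains a non-zero-divisor, $I$ is not contained in any associated prime, in particular not in $\p$ (minimal primes are associated), so $I_\p = R_\p$; thus $R_\p$ itself has finite injective dimension over $R_\p$, i.e., $R_\p$ is Gorenstein. Since this holds for every minimal prime, $R$ is generically Gorenstein. (If I carried out the completion reduction for the first part, I would either redo this localization argument downstairs — which is fine since the non-zero-divisor statement is now known for $R$ itself — or note that $\widehat R$ generically Gorenstein plus the flat map $R \to \widehat R$ descends generic Gorensteinness.)

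The main obstacle I anticipate is the annihilator-transfer step: making sure that passing through $I \cong \omega \otimes_R \Hom_R(\omega, I)$ and invoking \cite[Proposition 6.2]{Ab} is correctly set up — but as noted this can be sidestepped entirely by applying the already-proved \ref{37} verbatim to $M = I$, so in fact the proof is short: \ref{37} plus \ref{nw} give the non-zero-divisor, and a one-line localization at minimal primes gives generic Gorensteinness. The only care needed is confirming that $\id_R I < \infty$ is exactly the hypothesis \ref{37} requires (it is, taking $M = I$), and that $I \neq 0$ is used to turn "$\ann_R(I)$ contains a non-zero-divisor" into a contradiction unless $\ann_R(I) = 0$.
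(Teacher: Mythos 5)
Your ``cleanest route'' is exactly the paper's proof: apply \ref{37} to $M=I$ together with \ref{nw} to conclude that $\ann_R(I)=0$ (else a non-zero-divisor would kill $I$), hence $I$ contains a non-zero-divisor, and then localize at the associated (equivalently, minimal) primes, where $I_\p=R_\p$ forces $R_\p$ to be Gorenstein. The preliminary detours through completion and the duality $I\cong\omega\otimes_R\Hom_R(\omega,I)$ are unnecessary, as you yourself note, since \ref{37} already packages that work.
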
 

\begin{proof} If $I$ consists of zero-divisors, then $\ann_R(I)\neq 0$ by \ref{nw}. This would imply that $\ann_R(I)$ contains a non-zero-divisor by \ref{37}, i.e., $rI=0$ for some non-zero-divisor $r\in R$. But then, $I=0$, contradiction! Thus $I$ must contain a non-zero-divisor. Hence $I\nsubseteq \p$ for every $\p\in \Ass(R)$. Thus $I_{\p}=R_{\p}$ for every $\p\in \Ass(R)$. Hence for every $\p\in \Ass(R)$, $\id_{R_{\p}} R_{\p}=\id_{R_{\p}} I_{\p}<\infty$, so $R_{\p}$ is Gorenstein. Hence $R$ is generically Gorenstein.
\end{proof}


    

Lemma \ref{trfrc} generalizes \cite[Proposition 2.4(2)]{tk} and follows from \cite[Proposition 2.4(1)]{tk}. 

\begin{lem}\label{trfrc} Let $M,N$ be finitely generated $R$-submodules of $Q(R)$. If $N$ contains a non-zero-divisor of $R$, then $\tau_M(N)=(M:N)N$. 
\end{lem}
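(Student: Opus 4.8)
The plan is to reduce the whole statement to the canonical description of $\Hom_R(N,M)$ as a fractional ideal, which is available precisely because $N$ contains a non-zero-divisor, and then to observe that $\tau_M(N)$ is by its very definition a sum of images of such homomorphisms. So the two inclusions $\tau_M(N)\subseteq (M:N)N$ and $(M:N)N\subseteq \tau_M(N)$ will be handled separately, the second one being essentially formal.

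For the inclusion $(M:N)N\subseteq \tau_M(N)$, I would note that each $x\in (M:N)$ gives an $R$-linear map $\mu_x\colon N\to M$, $n\mapsto xn$ (well defined since $xN\subseteq M$), whose image is $xN$; hence $xN\subseteq \tau_M(N)$ for every $x\in (M:N)$, and since by definition $(M:N)N=\sum_{x\in (M:N)}xN$ as submodules of $Q(R)$, this gives the containment. This step uses nothing about non-zero-divisors. For the reverse inclusion I would invoke \cite[Proposition 2.4(1)]{tk}: because $N$ is a finitely generated submodule of $Q(R)$ containing a non-zero-divisor, every $f\in \Hom_R(N,M)$ is multiplication by a uniquely determined element $x_f\in Q(R)$, and then $x_f N=f(N)\subseteq M$ forces $x_f\in (M:N)$, so $f(N)=x_fN\subseteq (M:N)N$; summing over all $f\in\Hom_R(N,M)$ yields $\tau_M(N)\subseteq (M:N)N$. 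If one preferred a self-contained argument, I would fix a non-zero-divisor $t\in N$ (a unit in $Q(R)$), set $x_f=f(t)/t$, and check $f(n)=x_f n$ for all $n\in N$ by writing $n=a/b$ with $b$ a non-zero-divisor of $R$ and $a=bn\in R\cap N$, using $R$-linearity of $f$ to get $b\cdot t f(n)=t f(a)=a f(t)=b\cdot n f(t)$ and cancelling $b$.

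The only genuinely delicate point — the main obstacle, such as it is — is exactly the representation of an arbitrary $f\in\Hom_R(N,M)$ as multiplication by a fraction, and this is where the hypothesis that $N$ contains a non-zero-divisor is indispensable, since it is what makes $N$ a regular fractional ideal whose $\Hom$ can be computed inside $Q(R)$; this is precisely the content of the cited \cite[Proposition 2.4(1)]{tk}, so in the write-up I expect to simply quote it. Everything else is bookkeeping with the definition of $\tau_M(N)$ and of the product $(M:N)N$ inside $Q(R)$. I would also remark, as the paper does, that the special case $M=N$ recovers \cite[Proposition 2.4(2)]{tk}, namely $\tr_R(N)=(R:N)N$ when $N$ contains a non-zero-divisor.
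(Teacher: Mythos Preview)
Your proof is correct and follows exactly the route the paper indicates: the paper's entire argument is the one-line remark that the lemma ``follows from \cite[Proposition 2.4(1)]{tk}'', and you have spelled out precisely how that citation yields both inclusions. One small slip in your closing remark: the special case that recovers \cite[Proposition 2.4(2)]{tk} is $M=R$ (giving $\tr_R(N)=(R:N)N$), not $M=N$.
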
  


    

\begin{lem}\label{qr} Let $(R,\m)$ be a local Cohen--Macaulay  ring admitting a canonical module.  Let $0\neq J$ be a  finitely generated $R$-submodule of $Q(R)$ such that $\id_R J<\infty$. Then $R$ is generically Gorenstein, and for every canonical module $\omega\subseteq Q(R)$, it holds that $(J:\omega)$ has finite projective dimension, and $J=(J:\omega)\omega$
\end{lem}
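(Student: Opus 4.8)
The plan is to bootstrap from Lemma \ref{38}, which already gives that $J$ contains a non-zero-divisor of $R$ and that $R$ is generically Gorenstein; so the only real content to extract is the two claims about an arbitrary canonical module $\omega \subseteq Q(R)$ (that such an embedding exists is classical, since $R$ generically Gorenstein and Cohen--Macaulay with canonical module means $\omega$ can be identified with a height-one-unmixed ideal, or more simply one can use that $\omega$ embeds into $Q(R)$ because $\omega$ is torsion-free of rank one — I would cite the relevant passage of \cite{bh} or \cite{gsi}/\cite{tk}). Fix such an $\omega$. First I would record that $\omega$ contains a non-zero-divisor of $R$ as well: $\omega$ has rank one, so $\omega_{\mathfrak p} \cong Q(R)$-worth of a free module at each associated prime, hence $\omega \not\subseteq \mathfrak p$ for $\mathfrak p \in \Ass(R)$, and then \ref{nw} applies.

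Next, the key identity $J = (J:\omega)\omega$: since both $J$ and $\omega$ are finitely generated $R$-submodules of $Q(R)$ and $\omega$ contains a non-zero-divisor, Lemma \ref{trfrc} (applied with $M = J$, $N = \omega$) gives $\tau_J(\omega) = (J:\omega)\omega$. So it suffices to show $\tau_J(\omega) = J$. For this I would use that $\omega$ is a canonical module: $\Hom_R(\omega,J)$ can be computed, and because $J$ has finite injective dimension over the Cohen--Macaulay local ring $R$, by Bass's theorem $J$ is a (maximal Cohen--Macaulay, by the non-zero-divisor plus finite injective dimension — actually $J$ need not be MCM a priori, so I would instead argue as in \ref{37}) module, and $\Hom_R(\omega,-)$ applied to a module of finite injective dimension lands in modules of finite projective dimension, with $J \cong \omega \otimes_R \Hom_R(\omega,J)$ via the natural evaluation map (this is the content of \cite[Exercise 9.6.5]{bh}, exactly as used in \ref{37} and \ref{6}). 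Writing $P := \Hom_R(\omega, J)$, the isomorphism $\omega \otimes_R P \xrightarrow{\sim} J$ is given by $x \otimes f \mapsto f(x)$, so its image — which is all of $J$ — is $\sum_{f \in P} f(\omega) = \tau_J(\omega)$. Hence $\tau_J(\omega) = J$, giving $J = (J:\omega)\omega$.

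Finally, $\pd_R (J:\omega) < \infty$: from $J \cong \omega \otimes_R P$ with $P = \Hom_R(\omega,J)$ of finite projective dimension, I want to identify $P$ with $(J:\omega)$. The natural map $\Hom_R(\omega,J) \to Q(R)$ sending $f$ to $f(\omega)/\omega$ (well-defined since $\omega$ is a rank-one module containing a non-zero-divisor, so a homomorphism $\omega \to Q(R)$ is multiplication by a unique element of $Q(R)$) is injective with image exactly $\{x \in Q(R) : x\omega \subseteq J\} = (J:\omega)$; this is the standard identification $\Hom_R(\omega,J) \cong (J:\omega)$ for finitely generated submodules of $Q(R)$ with $\omega$ containing a non-zero-divisor. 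Hence $(J:\omega) \cong P$ has finite projective dimension, completing the proof.

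The main obstacle I expect is the clean justification that $\omega$ embeds in $Q(R)$ and the bookkeeping around $\Hom_R(\omega,J) \cong (J:\omega)$ — i.e. making sure all the maps "multiplication by an element of $Q(R)$" are literally the evaluation maps appearing in $J \cong \omega \otimes_R \Hom_R(\omega,J)$, so that the finite-projective-dimension conclusion transfers. Once those identifications are set up correctly, both conclusions fall out of \cite[Exercise 9.6.5]{bh}, Lemma \ref{trfrc}, and Lemma \ref{38}, with no new computation.
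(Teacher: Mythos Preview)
Your proposal is correct and follows essentially the same route as the paper. The only cosmetic differences are that the paper cites \cite[Exercise 3.3.28(b)]{bh} directly for $\tau_J(\omega)=J$ (whereas you derive it from the Foxby-type equivalence of \cite[Exercise 9.6.5]{bh} via the evaluation isomorphism $\omega\otimes_R\Hom_R(\omega,J)\xrightarrow{\sim}J$), and it cites \cite[Proposition 2.4(1)]{tk} for the identification $\Hom_R(\omega,J)\cong (J:\omega)$ rather than spelling it out; one small point you should make explicit is the paper's first step, namely that $J$ is isomorphic to an ideal of $R$ (clear denominators) so that Lemma~\ref{38} literally applies.
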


\begin{proof} Since $I\cong J$ for some ideal $I$ of $R$, hence $R$ is generically Gorenstein by Lemma \ref{38}. In particular, there exists a canonical module $\omega\subseteq Q(R)$ (by \cite[Proposition 3.3.18]{bh}). By \cite[Exercise 3.3.28(b)]{bh} we get $\tau_J(\omega)=J$. Since  $\omega$ contains a non-zero-divisor by \cite[Proposition 3.3.18]{bh}, hence from Lemma \ref{trfrc} we get $J=(J:\omega)\omega$. Moreover, $(J:\omega)\cong \Hom_R(\omega, J)$ (by \cite[Proposition 2.4(1)]{tk}) has finite projective dimension by \cite[Exercise 9.6.5]{bh}.  


\end{proof}

Now finally, we can state and prove our obstruction result regarding containment in ideals of finite injective dimension.

\begin{cor}\label{injmain}  Let $(R,\m)$ be a  local Cohen--Macaulay  ring  admitting a canonical module. Let $M$ be a maximal Cohen--Macaulay $R$-module. Let $0\neq J (\subseteq \m)$ be an  ideal of finite injective dimension. Then $R$ is generically Gorenstein, there exists a  canonical module $\omega$ satisfying $R\subseteq \omega\subseteq Q(R)$, and for some such canonical module, if $\tau_{\omega}(M)\subseteq J$ holds, then we must have $M=0$. 
\end{cor}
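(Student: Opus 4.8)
The strategy is to reduce Corollary \ref{injmain} to Corollary \ref{6} by replacing the ideal $J$ of finite injective dimension with a suitable ideal of finite projective dimension, using the canonical module as a bridge. First I would invoke Lemma \ref{38}: since $0\neq J\subseteq\m$ has finite injective dimension, $J$ contains a non-zero-divisor and $R$ is generically Gorenstein. Because $R$ is a local Cohen--Macaulay ring admitting a canonical module and is now generically Gorenstein, \cite[Proposition 3.3.18]{bh} gives a canonical module realized as an ideal; multiplying by a suitable unit (or a non-zero-divisor, using that $J$ and hence any fractional canonical ideal contains one), one arranges $R\subseteq\omega\subseteq Q(R)$. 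This establishes the first two assertions of the corollary, and fixes the canonical module $\omega$ we will work with.

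Next, assume $\tau_\omega(M)\subseteq J$ for this $\omega$. The key move is to pass from $J$ to the ideal $(J:\omega)$. Since $\omega\subseteq Q(R)$ contains a non-zero-divisor (by \cite[Proposition 3.3.18]{bh}), Lemma \ref{trfrc} applies to give $\tau_\omega(M) = (\tau_\omega(M):\omega)\,\omega$ whenever $\tau_\omega(M)$ contains a non-zero-divisor — but more directly, I would use Lemma \ref{qr} applied to $J$: it yields that $(J:\omega)$ has finite projective dimension and $J=(J:\omega)\omega$. Writing $I:=(J:\omega)$, we have $\pd_R I<\infty$; since $J\subseteq\m$ and $1\in\omega$, we get $I=(J:\omega)\subseteq J\subseteq\m$, so $I$ is a proper ideal of finite projective dimension. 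It remains to convert the hypothesis $\tau_\omega(M)\subseteq J = I\omega$ into the exact shape needed for Corollary \ref{6}, namely $\tau_\omega(M)\subseteq I\omega$ — which is now immediate.

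Finally, apply Corollary \ref{6} with this ideal $I$ of finite projective dimension, the canonical module $\omega$, and the maximal Cohen--Macaulay module $M$: the hypothesis $\tau_\omega(M)\subseteq I\omega$ is exactly what we just verified, so the conclusion $M=0$ follows. The one point requiring care — and the place I expect the main friction — is the bookkeeping around fractional ideals: ensuring that the canonical module can be normalized so that $R\subseteq\omega\subseteq Q(R)$ (so that "$J\subseteq\m$" genuinely forces "$(J:\omega)\subseteq\m$"), and that Lemma \ref{qr}'s identity $J=(J:\omega)\omega$ is being used with the same $\omega$ throughout. Everything else is a direct chaining of Lemma \ref{38}, Lemma \ref{qr}, and Corollary \ref{6}; no new computation is needed beyond tracking these inclusions.
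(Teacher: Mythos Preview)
Your proposal is correct and follows essentially the same route as the paper: invoke Lemma \ref{38} for generic Gorensteinness, normalize a canonical ideal to get $R\subseteq\omega\subseteq Q(R)$ (the paper does this explicitly by taking a canonical ideal $C\subseteq R$ containing a non-zero-divisor $a$ and setting $\omega=\tfrac{1}{a}C$), apply Lemma \ref{qr} to write $J=(J:\omega)\omega$ with $I:=(J:\omega)\subseteq\m$ of finite projective dimension, and then conclude via Corollary \ref{6}. The only minor imprecision is your parenthetical suggesting that $J$'s non-zero-divisor is what enables the normalization $R\subseteq\omega$; in fact it is the non-zero-divisor in the canonical ideal itself (again from \cite[Proposition 3.3.18]{bh}) that is used.
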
  

\begin{proof} $R$ is generically Gorenstein by  Lemma \ref{38}. Hence $R$ admits a canonical ideal $ C\subseteq R$ containing a non-zero-divisor (\cite[Proposition 3.3.18]{bh}), say $a\in C$. Then $\dfrac 1 a C$ is a canonical module containing $R$ and contained in $Q(R)$. We know $J=(J:\omega)\omega$ from Lemma \ref{qr}. Since $1\in \omega$, so $(J:\omega)\subseteq J\subseteq \m$. So, $I:=(J:\omega)$ is an ideal, contained in $\m$, of finite projective dimension (by Lemma \ref{qr}).  Now if $\tau_{\omega}(M)\subseteq J$ holds, then we are done by Corollary \ref{6}.  
\end{proof}

\begin{rem}\label{3.22} If $R$ is complete in Corollary \ref{injmain}, then we can use big Cohen--Macaulay modules $M$ instead of just maximal Cohen--Macaulay modules and conclude $\tau_{\omega}(M)\nsubseteq J$, by using Theorem \ref{bigmac} in the proof in place of Corollary \ref{6}.  
\end{rem}

\section{connections with ideals of entries in free resolutions}\label{trid} 

In this section, all modules are assumed to be finitely generated.

For a free resolution $(F_i,\partial_i)_{i=0}^\infty$ of a module $M$, we put $\partial_0=0$ by convention. $\I_1(\partial_j)$ will stand for the ideal generated by the entries of a matrix representation of the map $\partial_j$. This definition is independent of the choice of the matrix representation. When $R$ is local, $\syz^i_R M$ will stand for $\text{Im}(\partial_i)$, where $(F_i,\partial_i)_{i=0}^\infty$ is a minimal free resolution of $M$. 

In this section, we provide some comparison results between trace ideals and ideals of entries of maps in a free resolution of a module. These in turn provide obstruction results for containment of such ideals of entries in ideals of finite projective dimension. Our first such result is about modules, which are regular fractional ideals.

\begin{prop}\label{trentry} Let $I$ be a non-free $R$-submodule of $Q(R)$. Assume $I$ contains a non-zerodivisor of $R$. If $(F_i,\partial_i)$ is a free resolution of $I$,  then $\tr_R(I)\subseteq \I_1(\partial_1)$.  If  $y\in J(R)$ is a non-zero-divisor, then $\I_1(\partial_1)\nsubseteq yR$. 
\if0
Then the following hold:
\begin{enumerate}[\rm(1)]
\item If $I$ can be generated by $n$ elements, then $I$ can be generated by $n$-many non-zerodivisors of $R$.  

    \item If $(F_i,\partial_i)$ is a free resolution of $I$,  then $\tr_R(I)\subseteq \I_1(\partial_1)$. 
\end{enumerate}
\fi

\end{prop}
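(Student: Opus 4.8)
The plan is to exploit the explicit description of the trace ideal of a regular fractional ideal. First I would recall that since $I \subseteq Q(R)$ contains a non-zero-divisor, Lemma \ref{trfrc} (applied with $M = R$, $N = I$) gives $\tr_R(I) = \tau_R(I) = (R : I) I$. So the containment $\tr_R(I) \subseteq \I_1(\partial_1)$ will follow once I show $(R:I) I \subseteq \I_1(\partial_1)$, which I would do elementwise: an arbitrary element of $(R:I)I$ is a finite sum $\sum_k r_k x_k$ with $r_k \in (R:I)$ and $x_k \in I$, so it suffices to show $r x \in \I_1(\partial_1)$ whenever $r I \subseteq R$ and $x \in I$. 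Here I would use that $\partial_1 \colon F_1 \to F_0$ has image equal to the first syzygy of $I$ under the chosen surjection $F_0 \twoheadrightarrow I$; writing $F_0 = R^{\oplus n}$ with basis mapping to generators $x_1, \dots, x_n$ of $I$, the relations among the $x_i$ with coefficients in $R$ are exactly the columns of a matrix representing $\partial_1$.

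The key computation is then: fix $x = x_1 \in I$ (WLOG a generator) and $r \in (R:I)$. For each $j$, the element $r x_j \in R$, so $(r x_j) x_1 - (r x_1) x_j = 0$ in $Q(R)$, hence in $I$ — giving, for each $j \ne 1$, a relation $\sum_i a_{ij} x_i = 0$ with $a_{1j} = r x_j$, $a_{jj} = -r x_1$, and all other entries zero. Actually it is cleaner to argue: the column vector $(r x_1, r x_2, \dots, r x_n)^{\mathrm{T}}$, upon multiplication by... hmm, let me instead use the standard fact. Since $r \in (R:I)$, the vector $v = (rx_1, \ldots, rx_n)^{\mathrm T} \in R^{\oplus n} = F_0$ satisfies: the composite $R \xrightarrow{r} Q(R)$ restricted appropriately shows $v$ lies in the kernel of $F_0 \otimes_R Q(R) \to Q(R)$ after scaling... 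The clean statement I want is that $r x_1 \cdot (\text{any syzygy coefficient})$ — no. Let me reconsider: I would directly exhibit, for generators $x_1, \ldots, x_n$ of $I$ and $r \in (R:I)$, that $r x_1 x_j - r x_j x_1 = 0$ is a genuine $R$-linear relation among the generators $x_1, \ldots, x_n$ (its coefficients $r x_j, -r x_1, 0, \ldots$ are in $R$ because $r \in (R:I)$), hence this relation vector is in $\im \partial_1$, hence its entries — in particular $r x_j$ — lie in $\I_1(\partial_1)$. Taking $j$ over all indices and all generators $x$, and using that $(R:I)I$ is generated by the $r x_j$, yields $(R:I) I \subseteq \I_1(\partial_1)$, i.e. $\tr_R(I) \subseteq \I_1(\partial_1)$.

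For the second assertion, suppose $y \in J(R)$ is a non-zero-divisor and, for contradiction, $\I_1(\partial_1) \subseteq yR$. Then $\tr_R(I) \subseteq yR$. Now I would invoke the general fact that a trace ideal containing a non-zero-divisor cannot be contained in a proper principal ideal generated by a non-zero-divisor in the Jacobson radical: indeed $\tr_R(I)$ contains a non-zero-divisor (since $I$ does, and $\tr_R(I) = (R:I)I \supseteq$ ... more precisely $\tr_R(I)$ is a trace ideal, and trace ideals $\mathfrak t$ satisfy $\mathfrak t = \tr_R(\mathfrak t)$ so $(\mathfrak t : \mathfrak t) = (R : \mathfrak t)$ by \cite[Proposition 2.4(3)]{tk} when $\mathfrak t$ contains a non-zero-divisor). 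If $\tr_R(I) \subseteq yR$ then $\tr_R(I) = y \cdot ((R:I)I :_{?} y)$... I would argue: $\tr_R(I) \subseteq yR$ forces $y^{-1}\tr_R(I) \subseteq R$, so $y^{-1}\tr_R(I)$ is an ideal; since $\tr_R(I)$ is a trace ideal equal to $(R:I)I$ and contains a non-zero-divisor, one has $\tr_R(I) = (R : (R:\tr_R(I)))$ is reflexive/idempotent-like, and $\tr_R(I) \cdot \tr_R(I) = \tr_R(I)$ need not hold, but the relevant property is $(R : \tr_R(I)) \tr_R(I) = \tr_R(I)$. From $\tr_R(I) \subseteq yR$ I get $\tfrac1y \in (R:\tr_R(I))$, hence $\tr_R(I) = (R:\tr_R(I))\tr_R(I) \supseteq \tfrac1y \tr_R(I)$, so $\tfrac 1 y \tr_R(I) \subseteq \tr_R(I) \subseteq R$; iterating, $\tfrac{1}{y^k}\tr_R(I) \subseteq \tr_R(I)$ for all $k$, so $\tr_R(I) \subseteq \bigcap_k y^k \cdot(\tfrac1{y^k}\tr_R(I))$... this is getting circular.

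The cleaner route for the last line: since $\tr_R(I) = (R:I)I$ contains a non-zero-divisor and is a trace ideal, $(R : \tr_R(I)) = (\tr_R(I) : \tr_R(I))$ by \cite[Proposition 2.4(3)]{tk}; if $\tr_R(I)\subseteq yR$ with $y$ a non-zero-divisor in $J(R)$, then $\tfrac1y \in (R:\tr_R(I)) = (\tr_R(I):\tr_R(I))$, which is module-finite over $R$ (it is contained in $\overline R$... or rather it is integral over $R$ since it is a ring module-finite—wait, finiteness needs the conductor argument). The hard part — and the main obstacle — is exactly pinning down this last contradiction cleanly; I expect to phrase it as: $\tfrac1y$ is integral over $R$ (being in the ring $(\tr_R(I):\tr_R(I))$ which is a finitely generated $R$-module because $\tr_R(I)$ contains a non-zero-divisor), so $\tfrac1y$ satisfies a monic polynomial over $R$, giving $1 \in yR + y^2R + \cdots + y^mR = yR$ (using $y \in J(R)$ and Nakayama), contradicting $y \in \m \subseteq J(R)$, i.e. $yR \ne R$. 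So $\I_1(\partial_1) \not\subseteq yR$. I would double-check the finiteness claim for $(\tr_R(I):\tr_R(I))$ — it holds because any fractional ideal containing a non-zero-divisor has $(\mathfrak a : \mathfrak a) \subseteq \tfrac{1}{x}R$ for $x \in \mathfrak a$ a non-zero-divisor, hence is a submodule of a finitely generated module over the Noetherian ring $R$.
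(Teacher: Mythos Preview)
Your argument for the containment $\tr_R(I)\subseteq \I_1(\partial_1)$ is essentially the paper's: both use $\tr_R(I)=(R:I)I$, fix generators $x_1,\dots,x_n$ of $I$ coming from a basis of $F_0$, and for $r\in(R:I)$ observe that the vector $(rx_j)e_i-(rx_i)e_j$ (with $i\ne j$) lies in $\ker(F_0\to I)=\im\partial_1$, so its entries $rx_i,rx_j$ land in $\I_1(\partial_1)$. (Both arguments tacitly need $n\ge 2$; the paper simply declares $I$ non-principal at this point, and your relation likewise requires some $j\ne 1$.)

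For the second assertion your route is genuinely different and more elementary. The paper derives $\I_1(\partial_1)\nsubseteq yR$ by invoking Proposition~\ref{np} with $h=0$: since $y$ is a non-zero-divisor, $yR$ is free, so $\tr_R(I)\subseteq yR$ forces $I^*=0$, whence $I$ is torsion and therefore $0$, a contradiction. You instead argue directly from the trace-ideal property: $\tr_R(I)=(R:I)I$ contains a non-zero-divisor, so \cite[Proposition~2.4(3)]{tk} gives $(R:\tr_R(I))=(\tr_R(I):\tr_R(I))$, and this endomorphism ring is module-finite over $R$ (it sits inside $x^{-1}R$ for any non-zero-divisor $x\in\tr_R(I)$); then $\tr_R(I)\subseteq yR$ puts $y^{-1}$ in this ring, making $y^{-1}$ integral over $R$, so a monic relation for $y^{-1}$ multiplied by $y^m$ yields $1\in yR$, contradicting $y\in J(R)$. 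Your argument is self-contained and bypasses the Section~\ref{secmain} machinery entirely; the paper's argument has the virtue of exhibiting the statement as a special case of its main obstruction theorem. Both are correct.
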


\begin{proof} Let $\pi: F_0 \to \frac{F_0}{\text{Im}(\partial_1)}\cong I$ be the natural projection. Since $I$ is non-principal, so $F_0\cong R^{\oplus n}$ for some $n\ge 2$. Let $e_1,\dots,e_n$ be a basis of $F_0$. Set $\pi(e_i)=f_i \in I$ for all $i=1,\cdots,n$. So, in particular,  $f_1,\dots,f_n$ generate $I$. Let $x\in (R:_{Q(R)}I)$. Fix an $i\in\{1,\cdots,n\}$. Then $xf_i\in (R:_{Q(R)}I)I\subseteq R$. Pick $j\in\{1,\cdots,n\}$ such that $j\neq i$ (this can be done as $n\ge 2$). Consider $(xf_i) e_j-(xf_j)e_i\in F_0$. Now $\pi\left((xf_i) e_j-(xf_j)e_i \right)=(xf_i)\pi(e_j)-(xf_j)\pi(e_i)=xf_if_j-xf_jf_i=0$. So, $(xf_i) e_j-(xf_j)e_i\in \ker \pi=\text{Im}(\partial_1)$. Thus $xf_i\in \I_1(\partial_1)$. Since $i$ was arbitrary and $f_1,\dots,f_n$ generate $I$, so $(R:_{Q(R)}I)I\subseteq \I_1(\partial_1)$. Thus we conclude $\tr_R(I)\subseteq \I_1(\partial_1)$ by \cite[Proposition 2.4(2)]{tk}.  If  $y\in J(R)$ is a non-zero-divisor and $\I_1(\partial_1)\subseteq yR$, then $\tr_R(I)\subseteq y R$. So, $I^*=0$ by Proposition \ref{np} (applied to $h=0$). Since $I$ is torsion-free, hence $I=0$, contradicting $I$ contains a non-zero-divisor. 

\end{proof}

Our next result connects trace of syzygy and ideal of entries of some map in a minimal free resolution of a module under exactly one Ext vanishing.

We may frequently use \ref{34} without further reference.  

\begin{chunk}\label{34} Let $M$ be an $R$-module and $(F_i,\partial_i)_{i=0}^\infty$ be a free resolution of $M$, then We have isomorphisms $\Ext^i_R(\Im(\partial_j), -)\cong \Ext^{i+j}(M,-)$ for all $i\geq 1, j\geq 1$.
\end{chunk}


\begin{lem}\label{43}  Let $M$ be an $R$-module with a free resolution $(F_i,\partial_i)_{i=0}^\infty$. Then the following holds:

\begin{enumerate}[\rm(1)]  
\item Let $j\ge 1$ be an integer such that $\Ext^j_R(M,R)=0$. Then $\I_1(\partial_j)=\tr_R(\im(\partial_j))$. 
    
\item Let $R$ be local and $j\ge 1$ be an integer such that $\Ext^j_R(M,R)=0$. If $(F_i,\partial_i)_{i=0}^\infty$ is a minimal free resolution, then $\I_1(\partial_j)=\tr_R(\syz^j_R M)$. 
\end{enumerate}
\end{lem}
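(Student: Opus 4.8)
The plan is to set up the exact sequence computing $\Tr$ for the map $\partial_j$ and read off that $\I_1(\partial_j)$ is realized as the trace ideal of $\im(\partial_j)$. Write $Z_{j-1} = \im(\partial_j) = \ker(\partial_{j-1})$, so there is a short exact sequence $0 \to Z_j \to F_j \xrightarrow{\partial_j} Z_{j-1} \to 0$ exhibiting a projective presentation $F_{j+1} \xrightarrow{\partial_{j+1}} F_j \to Z_{j-1} \to 0$ of $Z_{j-1}$ only if we are careful — actually the cleanest choice is to view $\partial_j \colon F_j \to F_{j-1}$ as giving a presentation of $\coker(\partial_j)$, but since $\im(\partial_j) = \ker(\partial_{j-1})$ what I really want is the presentation $F_j \xrightarrow{\partial_j} F_{j-1} \to \coker(\partial_j) \to 0$; here $\coker(\partial_j) = F_{j-1}/\im(\partial_j) = F_{j-1}/\ker(\partial_{j-1}) \cong \im(\partial_{j-1})$, which is a $(j-1)$-st syzygy of $M$. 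Applying the transpose construction to this presentation and using \eqref{eqn:es-M*-Tr-M}, we get an exact sequence $0 \to (\im \partial_{j-1})^* \to F_{j-1}^* \xrightarrow{\partial_j^*} F_j^* \to \Tr(\im\partial_{j-1}) \to 0$; but more to the point, the dual complex $F_{j-1}^* \xrightarrow{\partial_j^*} F_j^* \xrightarrow{\partial_{j+1}^*} F_{j+1}^*$ has homology at $F_j^*$ equal to $\Ext^j_R(M,R)$ (via \ref{34}, $\Ext^j_R(M,R) \cong \Ext^1_R(\im\partial_{j-1},R)$, which is exactly this homology).

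The key computation is then: because $\Ext^j_R(M,R) = 0$, the sequence $F_{j-1}^* \xrightarrow{\partial_j^*} F_j^* \xrightarrow{\partial_{j+1}^*} F_{j+1}^*$ is exact at $F_j^*$, so $\im(\partial_j^*) = \ker(\partial_{j+1}^*)$. Now dualize once more: applying $(-)^*$ to $F_{j+1} \xrightarrow{\partial_{j+1}} F_j \xrightarrow{\partial_j} F_{j-1}$ and back, and using that $F_j^* \xrightarrow{\partial_j^{**}} F_{j-1}^{**}$ is just $\partial_j$ under the canonical identifications, one sees that $\im(\partial_j) = \ker(\partial_{j-1})$ can be recovered as $\tr$ of the appropriate module. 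Actually the cleanest route, and the one I would commit to: by \cite[Proposition 2.4(2)]{tk} (already cited in \ref{conductor}) the trace ideal of a module $X$ with presentation matrix $A$ is generated by the entries of $A$ precisely when $X^* \hookrightarrow F_0^* \xrightarrow{A^T} F_1^*$ has the property that $\im(A^T)$ generates... — here I would instead invoke the standard fact that for $X = \coker(A)$ one has $\tr_R(X) = \I_1(A)$ as soon as $\Tr(X)$ has no free summand, equivalently $\Ext^1_R(\Tr\Tr X, R)=0$; and $\Tr$ of a presentation with matrix $\partial_j$ computes $\Ext^j_R(M,R)$ in the relevant degree. So $\Ext^j_R(M,R) = 0$ forces the presentation $\partial_j$ of $\im(\partial_{j-1})$ to have $\tr_R(\im(\partial_{j-1})) = \I_1(\partial_j)$ — wait, I need $\im(\partial_j)$, not $\im(\partial_{j-1})$. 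Let me recommit: use the presentation $F_{j+1} \xrightarrow{\partial_{j+1}} F_j \to \im(\partial_j) \to 0$ (exact since $F_\bullet$ is a resolution). Then $(\im\partial_j)^*$ sits in $0 \to (\im\partial_j)^* \to F_j^* \xrightarrow{\partial_{j+1}^*} F_{j+1}^*$, and $\Tr(\im\partial_j) = \coker(\partial_{j+1}^*)$. The condition that makes $\tr_R(\im\partial_j) = \I_1(\partial_j)$ is that the composite $F_j^* \xrightarrow{\partial_j^*}$ lands in... — precisely, one needs $\partial_j^* \colon F_{j-1}^* \to F_j^*$ to have image generating the same ideal, which holds because $(\im\partial_j)^{**} \cong \im\partial_j$ (as $\im\partial_j$ is a syzygy, hence torsionless — this uses $j \geq 1$), and $\Ext^j_R(M,R) = \Ext^1_R(\im\partial_{j-1},R) = H^j(\Hom(F_\bullet,R)) = \ker\partial_{j+1}^*/\im\partial_j^* = 0$ gives $\ker\partial_{j+1}^* = \im\partial_j^*$, so that $\tr_R(\im\partial_j) = \im(\text{ev}) = \I_1(\partial_j)$ via \cite[Proposition 2.4]{tk}.

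For part (1), that is the whole argument: combine the two dualizations, invoke $\Ext^j_R(M,R)=0$ to get exactness of the dual complex at spot $j$, and apply the trace-ideal-equals-$\I_1$ criterion. For part (2), simply specialize to $R$ local and $(F_i,\partial_i)$ the minimal free resolution; then by definition $\syz^j_R M = \im(\partial_j)$ for the minimal resolution, so the statement of (2) is literally (1) with the notation $\syz^j_R M$ substituted for $\im(\partial_j)$, plus the observation that minimality is not even needed for the equality but is what lets us name $\im(\partial_j)$ unambiguously as $\syz^j_R M$. The main obstacle I anticipate is bookkeeping with the double dual and making the identification $\im(\partial_j)$ torsionless precise — i.e., verifying that the relevant map $\im(\partial_j) \to (\im\partial_j)^{**}$ is an isomorphism, which requires $j \geq 1$ so that $\im(\partial_j)$ is a genuine syzygy module sitting inside a free module (it embeds in $F_{j-1}$), hence torsionless; and then threading the exactness $\ker\partial_{j+1}^* = \im\partial_j^*$ through to the conclusion $\I_1(\partial_j) = \tr_R(\im\partial_j)$ cleanly rather than getting only one containment.
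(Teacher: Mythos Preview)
Your final approach is exactly the paper's: take the presentation $F_{j+1}\to F_j\to \im(\partial_j)\to 0$, dualize, use $\Ext^j_R(M,R)=0$ to get $\im(\partial_j^*)=\ker(\partial_{j+1}^*)=(\im\partial_j)^*$ inside $F_j^*$, and read off $\tr_R(\im\partial_j)=\I_1(\partial_j^*)=\I_1(\partial_j)$; the paper phrases this as factoring $\partial_j=i_j\circ p_j$, showing $i_j^*\colon F_{j-1}^*\to(\im\partial_j)^*$ is surjective from the Ext vanishing, splicing to get $F_{j-1}^*\xrightarrow{\partial_j^*}F_j^*\xrightarrow{\partial_{j+1}^*}F_{j+1}^*\to\Tr(\im\partial_j)\to 0$ exact, and then invoking \cite[Proposition~3.1]{he}. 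Two small fixes: the reference for the last step should be \cite[Proposition~3.1]{he}, not \cite[Proposition~2.4]{tk} (the latter is only about fractional ideals); and the claim $(\im\partial_j)^{**}\cong\im\partial_j$ is both unneeded and not a consequence of torsionlessness---drop it, since the Ext vanishing alone does all the work.
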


\begin{proof} (1)  For each $n\ge 1$, we can factor the map $F_n\xrightarrow{\partial_n}F_{n-1}$ as follows:

$$\begin{tikzcd}
F_n \arrow[rd, "p_n"'] \arrow[rr, "\partial_{n}"] &                                    & F_{n-1} \\                                                    & \im(\partial_n) \arrow[ru, "i_n"'] &        
\end{tikzcd}$$

where $i_n$ is just inclusion. Since $(F_i,\partial_i)_{i=0}^\infty$ is acyclic, so $\coker(\partial_n)\cong \Im(\partial_{n-1})$ for all $n\ge 2$, and also $\coker(\partial_0)\cong M$. Hence we notice that,  $\Ext^i_R(\coker(\partial_n),-)\cong \Ext^{i+n-1}_R(M,-)$ for all $i\ge 1, n\ge 1$. 

Now we have exact sequences $\alpha:\text{ } F_{j+1}\xrightarrow{\partial_{j+1}} F_j \xrightarrow{p_j} \im(\partial_j)\to 0$ and $\beta:\text{ }0\to \im(\partial_j)\xrightarrow{i_j}F_{j-1}\to \coker(\partial_{j})\to 0$. Dualizing the sequence $\alpha$ by $R$ we get the exact sequence $$0\to \im(\partial_j)^*\xrightarrow{p_j^*}F_j^*\xrightarrow{\partial_{j+1}^*}F_{j+1}^*\to \Tr(\im(\partial_j))\to 0 $$ Since $\Ext^1_R(\coker(\partial_{j}),R)\cong\Ext^j_R(M,R)=0$, so dualizing the sequence $\beta$ by $R$ we get the exact sequence  $$0\to \coker(\partial_{j})^*\to F_{j-1}^*\xrightarrow{i_j^*}\im(\partial_j)^*\to 0$$  Hence we get an exact sequence $$F_{j-1}^*\xrightarrow{p_j^*\circ i_j^*}F_j^*\xrightarrow{\partial_{j+1}^*}F_{j+1}^*\to \Tr(\im(\partial_j))\to 0$$    Now  $p_j^*\circ i_j^*=(i_j\circ p_j)^*=\partial_j^*$.   In view of \cite[Proposition 3.1]{he} we then get $\tr_R(\im(\partial_j))=\I_1(p_j^*\circ i_j^*)=\I_1(\partial_j^*)=\I_1(\partial_j)$.  

(2) This follows from (1) by taking a minimal free resolution $(F_i,\partial_i)_{i=0}^\infty$ and recalling that $\syz^j_R M=\im(\partial_j)$ by definition. 
\end{proof}

 Our next result shows that if the ideal of entries of certain map in a free resolution of a module is contained in an ideal of finite projective dimension, then under some additional Ext vanishing hypotheses, the module must have finite projective dimension.

\begin{cor}\label{44}   Let $I$ be an ideal of $R$ such that $ I\subseteq J(R)$ and $\pd_R (I)\leq h<\infty$. Let $M$ be an $R$-module and $j\geq 1$ be an integer such that $\Ext_R^{j\leq i \leq j+h}(M, R)=0$. If $(F_i,\partial_i)_{i=0}^\infty$ is a free resolution of $M$, and $\I_1(\partial_j)\subseteq I$, then $\pd_R M<j$. 
\end{cor}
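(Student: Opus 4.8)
The plan is to reduce the statement to Corollary \ref{hah}(1) applied to the syzygy module $\im(\partial_j)$. First I would observe that the vanishing $\Ext_R^{j\leq i\leq j+h}(M,R)=0$ translates, via the shift isomorphisms $\Ext^i_R(\im(\partial_j),-)\cong\Ext^{i+j}_R(M,-)$ recorded in \ref{34}, into $\Ext_R^{0\leq i\leq h}(\im(\partial_j),R)=0$; in particular $\Ext^j_R(M,R)=0$ gives the "degree $0$" vanishing $\Hom_R(\im(\partial_j),R)=(\im(\partial_j))^*=0$, which by \ref{tau} is equivalent to $\tr_R(\im(\partial_j))=0$, and the remaining vanishing $\Ext_R^{1\leq i\leq h}(\im(\partial_j),R)=0$ is exactly the hypothesis needed for Corollary \ref{hah}(1). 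Wait — I should be careful: Corollary \ref{hah}(1) already \emph{concludes} $\tr_R(M)=0$, so the logic is the other direction. The right move is: apply Lemma \ref{43}(1), whose hypothesis is precisely $\Ext^j_R(M,R)=0$, to get $\I_1(\partial_j)=\tr_R(\im(\partial_j))$. Then the assumption $\I_1(\partial_j)\subseteq I$ becomes $\tr_R(\im(\partial_j))\subseteq I$.

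Next I would set $N:=\im(\partial_j)$ and verify the hypotheses of Corollary \ref{hah}(1) for $N$ in place of $M$: we already have $I\subseteq J(R)$ and $\pd_R(I)\leq h<\infty$ by assumption, and $\tr_R(N)\subseteq I$ by the previous paragraph. The Ext-vanishing hypothesis of Corollary \ref{hah}(1) for $N$ is $\Ext_R^{1\leq i\leq h}(N,R)=0$; by \ref{34} this is $\Ext_R^{j+1\leq i\leq j+h}(M,R)=0$, which is part of the hypothesis $\Ext_R^{j\leq i\leq j+h}(M,R)=0$. (If $h=0$ this Ext-vanishing condition is vacuous, consistent with the $h=0$ case recovering \cite[Corollary 2.2(b)]{hr}.) Therefore Corollary \ref{hah}(1) yields $\tr_R(N)=0$, i.e. $N^*=\Hom_R(\im(\partial_j),R)=0$.

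Finally I would convert $\Hom_R(\im(\partial_j),R)=0$ into the conclusion $\pd_R M<j$. Here I recall the standard fact used already in the proof of Proposition \ref{np}: if $L^*=0$ then $\pd_R(\Tr L)\leq 1$; applied to $L=\im(\partial_j)$, and using that $\im(\partial_j)$ is itself the $\Tr$ of a module up to projective summands (or, more directly, that a module with vanishing dual has projective dimension forced by the exact sequence $(\ref{eqn:es-M*-Tr-M})$), one gets that $\im(\partial_j)$ has finite — indeed small — projective dimension. Actually the cleanest route: $\Ext^j_R(M,R)=0$ together with $\tr_R(\im(\partial_j))=0$ and $\im(\partial_j)$ being a syzygy (hence torsion-free, hence $\im(\partial_j)=0$ when it is also torsion, which $\tr=0$ gives by \cite[Proposition 1.2.3]{bh}); then $\im(\partial_j)=0$ forces the resolution to terminate at step $j-1$, so $\pd_R M\leq j-1<j$.

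The main obstacle I anticipate is the very last step — making sure the passage from "$\tr_R(\im(\partial_j))=0$" to "$\im(\partial_j)=0$" is legitimate. This needs that $\im(\partial_j)$ is torsion-free, which holds because it embeds in the free module $F_{j-1}$; combined with "torsion" (from $\tr=0$, via \cite[Proposition 1.2.3]{bh} exactly as in Proposition \ref{np}) this forces $\im(\partial_j)=0$, and hence $F_j=F_{j+1}=\cdots$ can be taken to be zero, giving $\pd_R M<j$. One should double-check the edge case $j$ with $\im(\partial_j)$ already zero (then $\pd_R M<j$ trivially) and note that no local hypothesis on $R$ is needed since Corollary \ref{hah}(1) and Lemma \ref{43}(1) are both stated without it.
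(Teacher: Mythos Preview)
Your proposal is correct and, after your self-corrections, follows exactly the paper's argument: use Lemma~\ref{43}(1) to identify $\I_1(\partial_j)$ with $\tr_R(\im(\partial_j))$, apply Corollary~\ref{hah}(1) to $\im(\partial_j)$ (whose Ext-vanishing hypothesis comes from \ref{34}), and conclude $\im(\partial_j)=0$ since it is both torsion and torsion-free. The only caveat is your initial claim that \ref{34} gives $\Ext^0_R(\im(\partial_j),R)=0$ from $\Ext^j_R(M,R)=0$---that isomorphism is stated only for $i\ge 1$---but you immediately abandon this and proceed correctly via Lemma~\ref{43}(1), just as the paper does.
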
    

\begin{proof} Since $\Ext_R^{j\leq i \leq j+h}(M, R)=0$, so $\I_1(\partial_j)=\tr_R(\Im(\partial_j))$ by Lemma \ref{43}(1). If  $\I_1(\partial_j)\subseteq I$, then $\tr_R(\Im(\partial_j))\subseteq I$. Since $\Ext_R^{1\leq i\leq h}(\Im(\partial_j), R)=0$ (see \ref{34}), so by Theorem \ref{hah}(1) we get, $\Im(\partial_j)$ is torsion. However, $j\geq 1$ implies $\Im(\partial_j)$ is torsion-free. Hence $\Im(\partial_j)=0$, and so $\pd_R M<j$.   
\end{proof}

\if0
Since $\Ext_R^{j\leq i \leq j+h}(M, R)=0$, so $\I_1(\partial_j)=\tr_R(\syz^j_R M)$ by Lemma \ref{43}. If  $\I_1(\partial_j)\subseteq I$, then $\tr_R(\syz^j_R M)\subseteq I$. Since $\Ext_R^{1\leq i\leq h}(\syz^j_R M, R)=0$, so by Theorem \ref{hah} $\syz^j_R M$ is torsion. However, $j\geq 1$ implies $\syz^j_R M$ is torsion-free. Hence $\syz^j_R M=0$, and so $\pd_R M<j$. 
\fi

\if0
\begin{lem}\label{extpd} Let $(R,\m)$ be local, $M,N$ be $R$-modules such that $\pd_R(N)<\infty$ and $\Ext^{1 \le i \le \pd_R(N)+1}_R(M,R)=0$. Then $\Ext^1_R(M,N)=0$.
\end{lem}

\begin{proof} We will prove this by inducting on $n=\pd_R(N)$. If $n=0$, then $N$ is free. So, the statement is obvious in this case. Next, let the statement be true for $n=k$. This means if $\Ext^{1 \le i \le k+1}_R(M,R)=0$, then $\Ext^1_R(M,N)=0$ for any $N$ with $\pd_R(N)=k$. Now let, $n=k+1$ and assume $\Ext^{1 \le i \le k+2}_R(M,R)=0$. Now consider the short exact sequence $0\to \Omega_RN\to R^{\oplus \mu(N)}\to N\to 0$. Now apply $\Hom_R(M,-)$ on this short exact sequence and consider the following part of the long exact sequence: 
\begin{align}\label{ext}
\cdots\to\Ext^i_R(M,\Omega_RN)\to\Ext^i_R(M,R)^{\oplus \mu(N)}\to\Ext^i_R(M,N)\to\cdots
\end{align}
Now since $\pd_R(N)=k+1$, so $\pd_R(\Omega_RN)=k$. Also, $\Ext^{1 \le i \le k+2}_R(M,R)=0$ implies that $\Ext^{1 \le i \le k+1}_R(M,R)=0$. Hence by applying the induction hypothesis we get that, $\Ext^{1 \le i \le k+1}_R(M,\Omega_RN)=0$. Hence from (\ref{ext}) we get that, $\Ext^{1 \le i \le k+2}_R(M,N)=0$. Thus the statement is true for $n=k+1$ provided it is true for $n=k$. Hence by induction we get that the statement is true for all $n$.
\end{proof}

\begin{cor}\label{pdex} Let $(R,\m)$ be a local ring of depth $t$. 
If $M$ is an $R$-module such that $\Ext^{1\le i<t+2}_R(M,R)=0$, then $\Ext^1_R(M,N)=0$ for every $R$-module $N$ of finite projective dimension. 
\end{cor}     

\begin{proof} 

By Auslander-Buchsbaum, if $\pd_R(N)<\infty$, then $\pd_R(N)\le t$. Let $\pd_R(N)=n<\infty$, so that $n+1<t+2$. Then by hypothesis, $\Ext^{1\le i\le n+1}_R(M,N)=0$. Hence, we are done by Lemma \ref{extpd}.    
\end{proof}

\begin{cor}\label{5} Let $(R,\mathfrak m)$ be a local ring of depth $t$ and $I\subseteq \m$ be an ideal of $R$ of finite projective dimension. 
Let $M$ be an $R$-module such that $\Ext^{1\le i<t+2}_R(M,R)=0$, and $\tr_R(M)\subseteq I$. Then $M=0$.
\end{cor}    

\begin{proof} If we take $N=R$ in Theorem \ref{4}, then the Tor vanishing condition is automatic. Also, since $\Ext^{1\le i<t+2}_R(M,R)=0$ and both $I$ and $\syz_R I$ have finite projective dimension, so $\Ext^1_R(M,\syz_R I)=0=\Ext^1_R(M,I)$ by Corollary \ref{pdex}. Then by Theorem \ref{4} we get, $M^*=0$. Since also $\Ext^{1\le i<t+2}_R(M,R)=0$, so we conclude $\grade_R M\ge t+2$. However, $\grade_R(M):=\inf\{\depth R_{\p}:\p\in \supp(M)\}$, and if $M\ne 0$, then  $\m \in  \supp(M)$, and then $\grade_R(M):=\inf\{\depth R_{\p}:\p\in \supp(M)\}\le \depth R_{\m}= t$, contradiction! Thus $M=0$.   
\end{proof} 
\fi





\section{order of trace ideals }\label{ortr}   


In this section, all modules are assumed to be finitely generated.

In this section, we apply the results from Section \ref{secmain} to establish some upper bounds on $\m$-adic order of certain trace ideals of local Cohen--Macaulay ring $(R,\m)$. In the following, $e(R)$ will denote the Hilbert-Samuel multiplicity of $R$. 

By Krull-intersection theorem, for any non-zero ideal $I$ of a local ring $(R,\m)$, the following quantity is finite $$\ord(I):=\sup \{n\in\mathbb{Z}_{\ge 0}\text{ }|\text{ } I\subseteq \m^n\}$$  

Note that, $\ord(I)=0$ if and only if $I=R$. We also denote by  $\ell\ell(R)$ the generalized Loewy length of $R$ defined as $$\ell\ell(R):=\inf \{i: \m^i\subseteq (x_1,\cdots,x_d) \text{ for some system of parameters } x_1,\cdots,x_d \text{ of } R \}$$ (see \cite[Remark 1.5]{kl}). It is clear that if $(R,\m)$ is Artinian local, then $\m^{\ell\ell(R)}=0$. 

\begin{prop}\label{her} Let $(R,\m)$ be a local Cohen--Macaulay ring of dimension $1$ with infinite residue field. Let $I\subseteq \m$ be a non-zero ideal of $R$ such that $I=\tr_R(I)$. Then $1\le \ord(I)\le \ell\ell(R)-1\le e(R)-\mu(\m)+1$. In particular, equality holds throughout when $R$ has minimal multiplicity. 
\end{prop}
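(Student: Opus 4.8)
The plan is to prove the displayed chain one inequality at a time and then obtain the final clause by a squeeze. The leftmost inequality $1\le\ord(I)$ is immediate: $I$ is a non-zero ideal with $I\subseteq\m=\m^{1}$, so $1$ lies in the set $\{n:I\subseteq\m^{n}\}$, which is finite by the Krull intersection theorem, and hence $\ord(I)\ge1$.

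For $\ord(I)\le\ell\ell(R)-1$ I would argue by contradiction. Suppose $\ord(I)\ge\ell\ell(R)$, so that $I\subseteq\m^{\ell\ell(R)}$. By the definition of the generalized Loewy length and because $\dim R=1$, there is a parameter $x\in\m$ with $\m^{\ell\ell(R)}\subseteq xR$; since $R$ is Cohen--Macaulay of dimension $1$, such an $x$ is a non-zero-divisor, so $xR\cong R$ and $\pd_{R}(xR)=0$. I then apply Proposition~\ref{np} with the ambient ideal taken to be $xR\subseteq J(R)=\m$, the module taken to be $M=I$, and $h=0$ (the requirement that $\Tr(I)$ satisfy $(\widetilde S_{0})$ is automatic). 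As $\tr_{R}(I)=I\subseteq xR$, the conclusion is $\tr_{R}(I)=0$, hence $I=\tr_{R}(I)=0$, contradicting $I\ne0$. Therefore $\ord(I)\le\ell\ell(R)-1$.

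For $\ell\ell(R)-1\le e(R)-\mu(\m)+1$ I would use the infinite residue field to choose a minimal reduction of $\m$ generated by a single element $x$ (the analytic spread of $\m$ equals $\dim R=1$). Then $x$ is a parameter, hence a non-zero-divisor, and $e(R)=e(xR;R)=\ell(R/xR)$ by the usual multiplicity formula for parameter ideals in Cohen--Macaulay rings together with the fact that reductions preserve multiplicity. Write $A=R/xR$ with maximal ideal $\n$, and let $\nu\ge1$ be its nilpotency index, so $\n^{\nu}=0\ne\n^{\nu-1}$. From $\m^{\nu}\subseteq xR$ we get $\ell\ell(R)\le\nu$; and, using that $A\supsetneq\n\supsetneq\cdots\supsetneq\n^{\nu-1}\supsetneq0$ is strictly decreasing by Nakayama,
\[
e(R)=\ell(A)=\sum_{j=0}^{\nu-1}\ell\!\left(\n^{j}/\n^{j+1}\right)\ \ge\ 1+\mu(\n)+(\nu-2)\ \ge\ \mu(\m)+\nu-2,
\]
where $\ell(A/\n)=1$, $\ell(\n/\n^{2})=\mu(\n)$, each $\ell(\n^{j}/\n^{j+1})\ge1$ for $2\le j\le\nu-1$, and $\mu(\n)\ge\mu(\m)-1$ for any parameter $x$ (in the degenerate case $\nu=1$, i.e.\ $R$ a DVR, one has $e(R)=1$ and the estimate is clear). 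Thus $\nu\le e(R)-\mu(\m)+2$, so $\ell\ell(R)-1\le\nu-1\le e(R)-\mu(\m)+1$. For the final clause, if $R$ has minimal multiplicity then $e(R)=\mu(\m)$ since $\dim R=1$, so $e(R)-\mu(\m)+1=1$ and the chain $1\le\ord(I)\le\ell\ell(R)-1\le1$ forces equality throughout.

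The step I expect to be the main obstacle is the third inequality: it requires invoking the existence of a one-element minimal reduction of $\m$ — which is precisely where the infinite residue field hypothesis is spent — along with the identity $e(R)=\ell(R/xR)$ for such a reduction in a Cohen--Macaulay ring, followed by the elementary but careful length bookkeeping in the Artinian quotient $A=R/xR$. The point that keeps that count simple is that only the weak bound $\mu(\n)\ge\mu(\m)-1$ is needed, so one never has to arrange $x\notin\m^{2}$. By contrast, the middle inequality is a clean application of the already-established Proposition~\ref{np}, and the leftmost one is formal.
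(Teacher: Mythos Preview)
Your proof is correct. For the leftmost inequality, the contradiction argument $\ord(I)\le\ell\ell(R)-1$ via Proposition~\ref{np} with $h=0$, and the minimal-multiplicity squeeze, you proceed exactly as the paper does. The only real divergence is the third inequality $\ell\ell(R)-1\le e(R)-\mu(\m)+1$: the paper simply cites \cite[Remark~1.5, Theorem~3.5(ii)]{kl}, whereas you give a self-contained argument by passing to a principal minimal reduction $xR$ of $\m$ (this is where the infinite residue field is used), invoking $e(R)=\ell(R/xR)$ in the Cohen--Macaulay case, and doing elementary length bookkeeping in the Artinian quotient. Your route has the advantage of being self-contained and of making transparent exactly where each hypothesis enters; the paper's citation is terser and applies verbatim in higher dimension, which is why the same reference is reused in Theorem~\ref{42}.
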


\begin{proof}  
First note that, $\ell\ell(R)-1\le e(R)-\mu(\m)+1$ follows from \cite[Remark 1.5, Theorem 3.5(ii)]{kl}. Now we will prove $\ord(I)\le \ell\ell(R)-1$ by contradiction. So, if possible let, $\ord(I)\ge \ell\ell(R)$. This implies $I\subseteq \m^{\ell\ell(R)}$. Now by the definition of $\ell\ell(R)$ we have, $\m^{\ell\ell(R)}\subseteq (x)$, where $x\in\m$ is a non-zero divisor. Thus $\tr_R(I)=I\subseteq xR$, so $I^*=0$ by Proposition \ref{np}. Hence $I = 0$, which is a contradiction! Thus $\ord(I)\le \ell\ell(R)-1$.
\\
Now if $R$ has minimal multiplicity, then $e(R)=\mu(\m)-\dim R+1$. Since $\dim R=1$, so $1\leq \ord(I)\le \ell\ell(R)-1\le e(R)-\mu(\m)+1=1$. Thus we have equality throughout. 
\end{proof}


Let $R$ be local and $M$ be an $R$-module. Note that, $\ord(\tr_R(M))=0$ if and only if $\tr_R(M)=R$ if and only if $R$ is a direct summand of $M$ (see \cite[Proposition 2.8.(iii)]{lindo}). 

The following result is a higher dimension version of Proposition \ref{her}. 

\begin{thm}\label{42} Let $(R,\m)$ be a local Cohen--Macaulay ring of depth $t$, with infinite residue field, and $M$ be an $R$-module such that $M^*\neq 0$, $R$ is not a direct summand of $M$, and $\Ext^{1\le i \le t-1}_R(M,R)=0$. Then $1\le \ord\left(\tr_R(M)\right)\le \ell\ell(R)-1\le e(R)-\mu(\m)+t$. In particular, equality holds throughout when $R$ has minimal multiplicity. 
\end{thm}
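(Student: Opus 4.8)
The plan is to establish the chain of inequalities one link at a time and then read off the minimal-multiplicity case. First, since $M^{*}\neq 0$ we have $\tr_R(M)\neq 0$ by \ref{tau}, so $\ord(\tr_R(M))$ is a well-defined finite integer by Krull's intersection theorem; and since $R$ is not a direct summand of $M$, \cite[Proposition 2.8.(iii)]{lindo} gives $\tr_R(M)\neq R$, hence $\tr_R(M)\subseteq\m$ and $\ord(\tr_R(M))\geq 1$. For the rightmost inequality $\ell\ell(R)-1\leq e(R)-\mu(\m)+t$, I would invoke \cite[Remark 1.5, Theorem 3.5(ii)]{kl}, exactly as in the proof of Proposition \ref{her}, using that $\dim R=\depth R=t$ since $R$ is Cohen--Macaulay; the infinite residue field hypothesis enters here (to pass to a minimal reduction that is a system of parameters).

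The core of the proof is the middle inequality $\ord(\tr_R(M))\leq\ell\ell(R)-1$, which I would prove by contradiction, mimicking the one-dimensional argument in Proposition \ref{her}. Suppose instead $\ord(\tr_R(M))\geq\ell\ell(R)$, i.e.\ $\tr_R(M)\subseteq\m^{\ell\ell(R)}$. By definition of $\ell\ell(R)$ there is a system of parameters $x_1,\dots,x_t\in\m$ with $\m^{\ell\ell(R)}\subseteq J:=(x_1,\dots,x_t)$. If $t=0$ then $J=0$, forcing $\tr_R(M)=0$, contrary to $M^{*}\neq 0$; so assume $t\geq 1$. Because $R$ is Cohen--Macaulay, $x_1,\dots,x_t$ is a regular sequence, so the Koszul complex on $x_1,\dots,x_t$ is a free resolution of $R/J$; hence $\pd_R(R/J)=t$ and $\pd_R(J)=t-1$. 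Now $J\subseteq\m=J(R)$, $\pd_R(J)=t-1=:h<\infty$, $\tr_R(M)\subseteq J$, and $\Ext_R^{1\leq i\leq h}(M,R)=\Ext_R^{1\leq i\leq t-1}(M,R)=0$ by hypothesis. Corollary \ref{hah}(1) then yields $M^{*}=0$, contradicting $M^{*}\neq 0$. This proves $\ord(\tr_R(M))\leq\ell\ell(R)-1$.

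Finally, if $R$ has minimal multiplicity then $e(R)=\mu(\m)-\dim R+1=\mu(\m)-t+1$, so $e(R)-\mu(\m)+t=1$; combined with the already-established $1\leq\ord(\tr_R(M))\leq\ell\ell(R)-1\leq e(R)-\mu(\m)+t=1$, all four quantities equal $1$. I do not anticipate a genuine obstacle: the proof is an assembly of Corollary \ref{hah}(1) with the Loewy-length versus multiplicity bounds of \cite{kl}. The one point that requires care is recognizing that $\m^{\ell\ell(R)}$ is contained in an ideal of finite projective dimension --- namely one generated by a regular system of parameters --- and that the Ext-vanishing hypothesis $\Ext_R^{1\leq i\leq t-1}(M,R)=0$ is calibrated precisely to the number $h=\pd_R(J)=t-1$ needed to invoke Corollary \ref{hah}(1).
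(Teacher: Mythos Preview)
Your proposal is correct and follows essentially the same approach as the paper's own proof: both argue by contradiction that $\tr_R(M)\subseteq\m^{\ell\ell(R)}\subseteq(\mathbf{x})$ for a parameter ideal $(\mathbf{x})$ of projective dimension $t-1$, then invoke Corollary~\ref{hah}(1) to force $M^*=0$, and both handle the Artinian case via $\m^{\ell\ell(R)}=0$ and the minimal-multiplicity case by the numerical identity $e(R)-\mu(\m)+t=1$. Your write-up is slightly more explicit in justifying the lower bound $\ord(\tr_R(M))\ge 1$ and the value $\pd_R(J)=t-1$ via the Koszul resolution, but the logical structure is identical.
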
       

\begin{proof} We first consider the Artinian case $t=0$. If $\tr_R( M)\subseteq \m^{\ell\ell(R)}=0$, then  $ M^*=0$, contradiction! Thus $1\leq \ord\left(\tr_R( M)\right)\le \ell\ell(R)-1\le e(R)-\mu(\m)$ (see \cite[Remark 1.5, Theorem 3.5(ii)]{kl}).

Now we consider $t\geq 1$.  First note that, $\ell\ell(R)-1\le e(R)-\mu(\m)+\dim R$ follows from \cite[Remark 1.5, Theorem 3.5(ii)]{kl}. Now we will prove $\ord\left(\tr_R(M)\right)\le \ell\ell(R)-1$ by contradiction. So, if possible let, $\ord\left(\tr_R(M)\right)\ge \ell\ell(R)$. This implies $\tr_R(M)\subseteq \m^{\ell\ell(R)}$. Now by the definition of $\ell\ell(R)$ we have, $\m^{\ell\ell(R)}\subseteq (\textbf{x})$ for some system of parameter $\textbf{x}=x_1,\cdots,x_t$. Thus $\tr_R(M)\subseteq (\textbf{x})$. Now let $I:=(\textbf{x})$. Since $\pd_R(I)=t-1$ and $\Ext^{1\le i \le t-1}_R(M,R)=0$, so Theorem \ref{hah}(1) implies that $M^*=0$, which is a contradiction! Hence $\ord\left(\tr_R(M)\right)\le \ell\ell(R)-1$.   
\\
Now if $R$ has minimal multiplicity, then $e(R)=\mu(\m)-\dim R+1$. So, $1\leq \ord\left(\tr_R(M)\right)\le \ell\ell(R)-1\le e(R)-\mu(\m)+\dim R=1$. Thus we have equality throughout.
\end{proof}

\begin{cor} Let $(R,\m)$ be a local Cohen--Macaulay ring of depth $t$, with infinite residue field, and $M$ be an $R$-module such that $M\neq 0$, $R$ is not a direct summand of $M$, and $\Ext^{1\le i \le t}_R(M,R)=0$. Then $1\le \ord\left(\tr_R(M)\right)\le \ell\ell(R)-1\le e(R)-\mu(\m)+\dim R$. In particular, equality holds throughout when $R$ has minimal multiplicity.
\end{cor}
\begin{proof} Since $\Ext^{1\le i \le t}_R(M,R)=0$ and $M\neq 0$, so $M^*\neq 0$ by \cite[Proposition 1.2.10]{bh}. Now we are done by Theorem \ref{42}. 
\end{proof}

Let $(R,\m,k)$ be a local Cohen--Macaulay ring of dimension $1$ with $k$ infinite. If $I$ is an $\m$-primary ideal of $R$, then $\c\subseteq \tr_R(I)$ (\cite[Lemma 3.2]{med}), hence $\ord(\tr_R(I))\le \ord(\c)$. We also notice that $\c=\tr_R(\overline R)$ is itself a trace ideal. Motivated by Proposition \ref{her}, we then raise the following question:  

\begin{ques}\label{hyp} Let $(R,\m)$ be a complete local hypersurface domain of dimension $1$. Then is $\ord(\c)=e(R)-1$? 
\end{ques}

For numerical semigroup rings, Question \ref{hyp} has an affirmative answer as follows:   

\begin{prop}\label{nuco} Let $a<b$ be relatively prime positive integers. Consider $R=k[[t^a,t^b]]$, where $k$ is a field. Then $\ord(\c)=a-1$. 
\end{prop}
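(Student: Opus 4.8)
The plan is to compute the conductor $\c = c_R(\overline R)$ explicitly for the numerical semigroup ring $R = k[[t^a, t^b]]$ and then read off its $\m$-adic order. First I would recall that $\overline R = k[[t]]$, since $\gcd(a,b) = 1$ forces the numerical semigroup $S = \langle a, b\rangle$ to generate all sufficiently large integers, so $\overline R$ is the normalization with value semigroup $\mathbb{Z}_{\ge 0}$. The conductor ideal $\c = (R : \overline R)$ consists of those power series in $R$ that multiply all of $t^n$ ($n \ge 0$) back into $R$; in terms of value semigroups this is exactly $\c = (t^f, t^{f+1}, t^{f+2}, \dots) = t^f k[[t]]$, where $f = f(S)$ is the Frobenius number of $S$ plus one (the conductor of the numerical semigroup), i.e. the smallest integer $c$ such that every integer $\ge c$ lies in $S$. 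By Sylvester's classical formula for a two-generated numerical semigroup, the Frobenius number of $\langle a,b\rangle$ is $ab - a - b$, so $f = ab - a - b + 1 = (a-1)(b-1)$.

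Next I would translate $\ord(\c)$ into a statement purely about the semigroup. Since $\m = (t^a, t^b)$, the powers $\m^n$ have value semigroup equal to $\{\,\text{sums of } n \text{ elements of } \{a,b\}\,\} + S$, and more usefully $\ord$ of a monomial ideal generated by $t^{s}$ for $s$ in some set is governed by the smallest value appearing. Concretely, $\c = t^f k[[t]] \cap R$ has $R$-module generators $\{t^s : s \in S,\ s \ge f\}$, and its order is $\sup\{n : t^s \in \m^n \text{ for all such } s\}$. The element of smallest value in $\c$ is $t^f$ itself (note $f = (a-1)(b-1) \in S$ since $(a-1)(b-1) = ab - a - b + 1$; one checks $(b-1)a - (a-1)\cdot\text{(something)}$... in fact $ab-a-b+1$ is the conductor and is always in $S$ for the two-generated case), and I would argue that $\ord(\c) = \ord(t^f R) = \lfloor$ the largest $n$ with $f \ge na$ when we use the generator $t^a$ of smallest degree $\rfloor$. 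More carefully: for a monomial $t^f$, membership $t^f \in \m^n$ means $f$ can be written as $n$-fold sum of elements of $S$ each of which is a nonnegative combination $ia + jb$ with $i + j \ge 1$... the cleanest route is $\ord(\m^n) $-type reasoning, giving $\ord(t^f R) = \lfloor f/a \rfloor$ provided $a \le b$, since $\m^n \supseteq (t^{na})$ and $t^f / t^{na} \in R$ iff $f - na \in S$, and one shows the controlling bound is division by the smallest generator $a$. Then $\lfloor f/a \rfloor = \lfloor (a-1)(b-1)/a \rfloor = \lfloor (ab - a - b + 1)/a \rfloor = (b-1) + \lfloor (1 - b)/a \rfloor = b - 1 - \lceil (b-1)/a \rceil$, which does not obviously equal $a - 1$; so the honest computation of $\ord(\c)$ needs the finer structure of which monomials $t^s$ with $s \ge f$, $s \in S$, actually sit in $\c$ and what the minimum of $\ord(t^s)$ over them is — it is this minimum, not $\ord(t^f)$, that equals $\ord(\c)$.

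The main obstacle, then, will be pinning down precisely $\min\{\ord_R(t^s) : s \in S,\ s \ge f\}$ and showing it equals $a - 1$. I expect the right statement is: $\c$ is minimally generated as an $R$-module (or as an ideal) in such a way that one of its generators has order exactly $a - 1$ and none has smaller order. A clean way to see $\ord(\c) \le a-1$: exhibit a specific element of $\c$ lying in $\m^{a-1}$ but not $\m^a$ — for instance $t^{(a-1)b}$ works since $(a-1)b = (a-1)b \ge (a-1)(b-1) = f$, it lies in $S$, and $t^{(a-1)b} = (t^b)^{a-1} \in \m^{a-1}$, while a degree/valuation count shows it is not in $\m^a$ because any element of $\m^a$ has $t$-order $\ge$ the minimum $a$-fold sum... here $(a-1)b$ versus $a \cdot a = a^2$: if $b \ge a$ then $(a-1)b$ could exceed $a^2$, so this particular witness may not rule out $\m^a$-membership and I would instead use $t^f$ or a nearby generator together with the fact that $f = (a-1)(b-1) < a \cdot \min(\text{value of }\m^a\text{ generators})$ fails similarly — so the genuinely delicate point is a valuation-combinatorial lemma computing $\ord_R$ of a monomial in a numerical semigroup ring, namely $\ord_R(t^s) = \max\{\, n : s = \sum_{k=1}^n a_k,\ a_k \in S \setminus\{0\} \,\}$, and then optimizing this over $s \in \c$. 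For the conclusion $\ord(\c) = a-1$ to hold, I would verify (i) every monomial generator $t^s$ of $\c$ has $\ord_R(t^s) \ge a - 1$, using $s \ge f = (a-1)(b-1)$ and writing $s$ as a sum of $\ge a-1$ elements of $S$ greedily by subtracting $a$'s and $b$'s, and (ii) equality is attained, by exhibiting $s \in \c$ that cannot be written as a sum of $a$ nonzero semigroup elements — the natural candidate being $s = f = (a-1)(b-1)$ itself, since writing it as a sum of $a$ elements each $\ge a$ would force $s \ge a^2 = a \cdot a$, and indeed $(a-1)(b-1) \ge a^2 \iff (a-1)(b-1) \ge a^2$, which for $a \le b$ can go either way, so the final verification will require handling the regime $b$ close to $a$ separately from $b \gg a$ and is where I expect the bookkeeping to be heaviest. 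Once $\ord_R(t^f) = a - 1$ and the lower bound $\ord(\c) \ge a-1$ are both in hand, the proposition follows; and it is worth noting this also answers Question \ref{hyp} affirmatively for $a = e(R)$ when $R = k[[t^a, t^b]]$ is a hypersurface, since then $e(R) = a = \min(a,b)$.
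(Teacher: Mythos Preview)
Your lower bound argument (i) is on the right track and in fact coincides with the paper's: if $i\ge (a-1)(b-1)$ and $i=ra+sb$ with $r,s\ge 0$, then $r+s\ge a-1$, for otherwise $i\le (r+s)b\le (a-2)b<(a-1)(b-1)$ (the last inequality since $b>a$), a contradiction; hence $t^i\in\m^{a-1}$ and $\c\subseteq\m^{a-1}$.

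The genuine gap is in your upper bound (ii). You are using the wrong criterion for membership in $\m^a$: a monomial $t^s$ lies in $\m^a$ if and only if $s$ admits \emph{some} representation $s=pa+qb$ with $p,q\ge 0$ and $p+q\ge a$, not merely if $s\ge a^2$. With the correct criterion, your own candidate $t^{(a-1)b}$ does work: $(a-1)b\ge(a-1)(b-1)$, so $t^{(a-1)b}\in\c$, and if $(a-1)b=pa+qb$ with $p,q\ge 0$ then $pa=(a-1-q)b$, whence $a\mid(a-1-q)$ by $\gcd(a,b)=1$, forcing $q=a-1$ and $p=0$; thus the only representation has $p+q=a-1<a$, so $t^{(a-1)b}\notin\m^a$. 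No case split on the size of $b$ is needed.

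More to the point, the paper does not do any of this. It obtains the upper bound $\ord(\c)\le a-1$ immediately from Proposition~\ref{her}: since $\c=\tr_R(\overline R)$ is a nonzero trace ideal contained in $\m$, and $e(R)=a$, $\mu(\m)=2$, that proposition gives $\ord(\c)\le e(R)-\mu(\m)+1=a-1$. You have overlooked that the ambient Section~\ref{ortr} already provides this bound (indeed, exhibiting such applications is the purpose of that section), and instead embarked on a direct combinatorial computation that, as written, stalls on a mistaken membership test. The combinatorial route can be completed as above, but the intended proof is the one-line appeal to Proposition~\ref{her}.
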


\begin{proof} If $a=1$, then there is nothing to prove. Now assume $a>1$. By \cite[Example 12.2.1]{sh} we have, $\c=\left(t^i: i\geq (a-1)(b-1)\right)$. Let $i\geq (a-1)(b-1)$. As $t^i\in R$ for $i\geq (a-1)(b-1)$ so $i=ra+sb$ for $r,s\geq 0$. Now if $r+s\leq a-2$, then $(a-1)(b-1)\leq ra+sb\leq(r+s)b\leq (a-2)b$. So, $ab-a-b+1\leq ab-2b$, so $b+1\leq a$, contradiction!  Thus, if $i\geq (a-1)(b-1)$, then $i=ra+sb$, where $r,s\geq 0$ and $r+s\geq a-1$. Hence $t^i=(t^a)^r(t^b)^s\in \m^{r+s}\subseteq \m^{a-1}$. Hence $\c \subseteq \m^{a-1}$, i.e., $\ord(\c)\geq a-1$. Now $e(R)=a$ by \cite[Exercise 4.6.18]{bh}. So, the claim follows from Proposition \ref{her}.

\end{proof}

Since for one-dimensional local Cohen--Macaulay rings of minimal multiplicity we have $e(R)=\mu(\m)$, so the next boundary case of Proposition \ref{her} gives rise to the following question. 

\begin{ques}\label{qu2} Let $R=k[[H]]$ be a numerical semigroup ring with $e(R)-\mu(\m)=1$. Then is $\ord(\c)=2$?  
\end{ques}

In relation to Question \ref{qu2}, we mention, in passing, a straightforward lower bound for the order of the conductor of a numerical semigroup ring in terms of the largest generator of the minimal generating set and the Frobenius number $F(S)$ (see \cite[Chapter 1, Section 3]{gs}) of the numerical semigroup.  

\begin{prop} Let $H=\langle a_1,\cdots,a_n\rangle$ be a numerical semigroup and $k[[t^{a_1},\cdots, t^{a_n}]]$ be a corresponding numerical semigroup ring, where $1<a_1<\cdots<a_n$. Then $\ord(\c)\geq \left\lfloor\frac{F(S)}{a_n}\right\rfloor+1$. In particular, if $F(S)>a_n$, then $\ord(\c)\geq 2$.
\end{prop}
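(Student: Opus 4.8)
The plan is to reduce the whole statement to a one-line counting argument, once the conductor is made explicit. First I would recall the standard description of the conductor of a numerical semigroup ring: if $R=k[[t^{a_1},\dots,t^{a_n}]]$ with integral closure $\overline R=k[[t]]$, then $\c=c_R(\overline R)=(t^j : j\ge F(S)+1)$, i.e.\ $\c$ is generated by those monomials $t^j$ whose exponent is at least the Frobenius number plus one. This is classical (see e.g.\ \cite{gs} or \cite[Chapter 12]{sh}, and it is exactly the $n=2$ formula used in Proposition \ref{nuco}): every integer $j\ge F(S)+1$ lies in $H=\langle a_1,\dots,a_n\rangle$, while $F(S)\notin H$, and a short check on supports of power series shows $f\in R$ satisfies $f\overline R\subseteq R$ precisely when the support of $f$ avoids the finite gap set of $H$. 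Since $\ord(\c)\ge m$ means by definition $\c\subseteq\m^m$, and containment of ideals can be checked on a generating set, it suffices to prove $t^j\in\m^m$ for every $j\ge F(S)+1$, where $m:=\lfloor F(S)/a_n\rfloor+1$.

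For the main step I would use that $\m=(t^{a_1},\dots,t^{a_n})$, so $\m^m$ is the ideal generated by the monomials $t^{a_{i_1}+\dots+a_{i_m}}$; consequently $t^j\in\m^m$ as soon as $j$ admits an expression $j=\sum_k c_k a_k$ with $c_k\in\mathbb Z_{\ge 0}$ and $\sum_k c_k\ge m$. Now fix $j\ge F(S)+1$. Since $j\in H$, it has at least one such expression $j=\sum_k c_k a_k$, and because $a_k\le a_n$ for all $k$ we get
\[
j=\sum_k c_k a_k\le a_n\sum_k c_k,\qquad\text{hence}\qquad \sum_k c_k\ge \frac{j}{a_n}\ge\frac{F(S)+1}{a_n}>\frac{F(S)}{a_n}\ge\left\lfloor\frac{F(S)}{a_n}\right\rfloor .
\]
As $\sum_k c_k$ is an integer strictly greater than $\lfloor F(S)/a_n\rfloor$, it is at least $\lfloor F(S)/a_n\rfloor+1=m$. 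Thus every representation of $j$ already uses at least $m$ generators, so $t^j\in\m^m$; since $j\ge F(S)+1$ was arbitrary, $\c\subseteq\m^m$ and therefore $\ord(\c)\ge\lfloor F(S)/a_n\rfloor+1$. The final assertion is then immediate: if $F(S)>a_n$ then $\lfloor F(S)/a_n\rfloor\ge 1$, so $\ord(\c)\ge 2$.

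I do not expect a genuine obstacle: the only point needing a little care is the explicit description of $\c$ via the Frobenius number (the argument with supports/completions of power series), and after that the proof is elementary integer arithmetic together with the monomial description of powers of $\m$. As a consistency check I would note that for $n=2$, $R=k[[t^a,t^b]]$ with $a<b$, one has $F(S)=ab-a-b$ and $a_n=b$, so the bound reads $\lfloor(ab-a-b)/b\rfloor+1=(a-2)+1=a-1$, which matches the exact value $\ord(\c)=a-1$ obtained in Proposition \ref{nuco}; so the estimate is sharp in the two-generator case.
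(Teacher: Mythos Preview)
Your proof is correct and follows essentially the same route as the paper: both use the explicit description $\c=(t^j:j\ge F(S)+1)$, write each exponent $j$ as $\sum_k c_k a_k$, and bound $\sum_k c_k$ from below via $j\le a_n\sum_k c_k$ to place $t^j$ in the appropriate power of $\m$. Your write-up is a bit more expansive (you justify the conductor description and add the sharpness check against Proposition~\ref{nuco}), but the argument is the same.
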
 

\begin{proof} Set $b:=\left\lfloor\frac{F(S)}{a_n}\right\rfloor$. Then $F(S)+1>ba_n$. We know that, $\c=(t^i: i\geq F(S)+1)$. For every $i\geq F(S)+1$, $i=\sum_{j=1}^n c_ja_j\leq (\sum_{j=1}^n c_j)a_n$ for some non-negative integers $c_1,\cdots,c_n$. Thus $(\sum_{j=1}^n c_j)a_n\geq F(S)+1>ba_n$, and so $\sum_{j=1}^n c_j>b$, i.e., $\sum_{j=1}^n c_j\geq b+1$. Hence $t^i=\prod_{j=1}^n (t^{a_j})^{c_j}\in \m^{\sum_{j=1}^nc_j}\subseteq \m^{b+1}$. So, $\ord(\c)\geq b+1$, i.e., $\ord(\c)\geq \left\lfloor\frac{F(S)}{a_n}\right\rfloor+1$.    
\end{proof}

\section{order of ideals generated by entries of maps in minimal free resolutions}\label{minfreeord}

In this section, all modules are assumed to be finitely generated.

We conclude with providing some upper bounds for the $\m$-adic order of the ideal of entries of certain maps in a minimal free resolution of certain modules over a local Cohen--Macaulay ring $(R,\m)$, which are direct consequences of results from Sections \ref{trid} and \ref{ortr}. All modules are assumed to be finitely generated in this section. Here, $\syz_R(-)$ will stand for syzygy in a minimal free resolution.  

\begin{chunk}\label{ords} If $(F_i,\partial_i)_{i=0}^\infty$ is a minimal free resolution of a module $M$ over a local ring, then $\I_1(\partial_i)\subseteq \mathfrak m$, and consequently $\ord(\I_1(\partial_i))\geq 1$ for every $i\ge 1$.  
\end{chunk}

\begin{cor}\label{62} Let $(R,\m)$ be a local Cohen--Macaulay ring of depth $t\geq 1$, with infinite residue field, and $M$ be a non-free $R$-module such that $R$ is not a direct summand of $\syz_R M$, and $\Ext^{1\le i \le t}_R(M,R)=0$. If $(F_i,\partial_i)_{i=0}^\infty$ is a minimal free resolution of $M$, then $1\le \ord\left(\I_1(\partial_1)\right)\le \ell\ell(R)-1\le e(R)-\mu(\m)+t$. In particular, equality holds throughout when $R$ has minimal multiplicity.
\end{cor}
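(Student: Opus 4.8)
The plan is to deduce Corollary \ref{62} by combining Lemma \ref{43}(2), which identifies $\I_1(\partial_1)$ with a trace ideal, with Theorem \ref{42}, which bounds the order of such a trace ideal. First I would observe that since $\Ext^{1\le i\le t}_R(M,R)=0$ with $t\ge 1$, in particular $\Ext^1_R(M,R)=0$, so Lemma \ref{43}(2) applies with $j=1$ and gives $\I_1(\partial_1)=\tr_R(\syz_R M)$, where $\syz_R M = \im(\partial_1)$ is the first syzygy in the given minimal free resolution.

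Next I would verify that $N:=\syz_R M$ satisfies the hypotheses of Theorem \ref{42}. We are directly given that $R$ is not a direct summand of $\syz_R M$. For the condition $N^*\neq 0$: since $M$ is non-free and $(F_i,\partial_i)$ is a minimal free resolution, $\syz_R M\neq 0$; moreover $\syz_R M$ is a first syzygy, hence a submodule of a free module, hence torsion-free, so $(\syz_R M)^*\neq 0$ (a nonzero torsion-free module over a ring with total quotient ring injecting into it admits a nonzero map to $R$ — or more simply, $\syz_R M$ embeds in a free module, giving a nonzero homomorphism to $R$). Finally, for the Ext vanishing hypothesis of Theorem \ref{42} applied to $N=\syz_R M$, I need $\Ext^{1\le i\le t-1}_R(\syz_R M,R)=0$; by the shift isomorphism \ref{34}, $\Ext^i_R(\syz_R M,R)\cong \Ext^{i+1}_R(M,R)$ for $i\ge 1$, and the right-hand side vanishes for $1\le i\le t-1$ precisely because $\Ext^{2\le i+1\le t}_R(M,R)=0$, which is part of our hypothesis $\Ext^{1\le i\le t}_R(M,R)=0$.

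With these checks in place, Theorem \ref{42} applied to $\syz_R M$ yields $1\le \ord(\tr_R(\syz_R M))\le \ell\ell(R)-1\le e(R)-\mu(\m)+t$ (using $\dim R=t$ since $R$ is Cohen--Macaulay), and substituting $\I_1(\partial_1)=\tr_R(\syz_R M)$ gives the displayed chain of inequalities. The ``in particular'' clause about minimal multiplicity transfers verbatim from the corresponding clause of Theorem \ref{42}. I do not expect any serious obstacle here; the only point requiring a little care is confirming $(\syz_R M)^*\neq 0$ and that $R$ not being a summand of $\syz_R M$ is genuinely needed (it is, to invoke Theorem \ref{42}'s hypothesis), so I would make the torsion-freeness argument explicit rather than leaving it implicit.
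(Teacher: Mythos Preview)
Your proposal is correct and follows essentially the same route as the paper's proof: identify $\I_1(\partial_1)=\tr_R(\syz_R M)$ via Lemma \ref{43}(2), verify the hypotheses of Theorem \ref{42} for $\syz_R M$ using the Ext shift \ref{34}, and read off the inequalities (with $\dim R=t$ for a Cohen--Macaulay local ring). Your justification of $(\syz_R M)^*\neq 0$ via the embedding of $\syz_R M$ into a free module is in fact more explicit than the paper's, which records this implication without comment.
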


\begin{proof} As $M$ is not free, so $\syz_R M\ne 0$. Hence $(\syz_R M)^*\neq 0$. The hypothesis yields $\Ext^{1\le i \le t-1}_R(\syz_R M, R)=0$. Hence applying Theorem \ref{42} on $\syz_R M$, we get $1\le \ord\left(\tr_R(\syz_R M)\right)\le \ell\ell(R)-1\le e(R)-\mu(\m)+\dim R$. Since $t\geq 1$, so $\Ext^1_R(M,R)=0$. So, we get $\I_1(\partial_1)=\tr(\syz_R M)$ by Lemma \ref{43}(2).  
\end{proof}

Corollary \ref{62} does not cover the Artinian case, for which we have the following.

\begin{prop} Let $(R,\m)$ be an Artinian local ring, with infinite residue field, and $M$ be a non-free $R$-module such that $R$ is not a direct summand of $\syz_R M$, and $\Ext^{1}_R(M,R)=0$. If $(F_i,\partial_i)_{i=0}^\infty$ is a minimal free resolution of $M$, then $1\le \ord\left(\I_1(\partial_1)\right)\le \ell\ell(R)-1\le e(R)-\mu(\m)$. In particular, equality holds throughout when $R$ has minimal multiplicity. 
\end{prop}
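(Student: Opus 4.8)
The plan is to mirror the proof of Corollary~\ref{62}, replacing the application of Theorem~\ref{42} with its Artinian counterpart, namely the $t=0$ case already isolated inside the proof of Theorem~\ref{42}. First I would observe that since $M$ is not free, its first syzygy $\syz_R M$ (taken in a minimal free resolution) is nonzero, and moreover $(\syz_R M)^* \neq 0$: this is because $\syz_R M$ is a (nonzero) submodule of a free module over an Artinian ring, and any nonzero submodule of a free module over a ring of depth $0$ has nonzero dual, since $R$ itself has depth $0$ so $\Hom_R(-,R)$ does not kill nonzero submodules of free modules. (Alternatively, one can invoke $\Ext^1_R(M,R)=0$ together with \cite[Proposition 1.2.10]{bh} applied to $M$, then pass to the syzygy.)

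Next I would apply Theorem~\ref{42} to the module $N := \syz_R M$ over the Artinian ring $R$ (depth $t=0$). The required hypotheses are: $N^* \neq 0$, which we just checked; $R$ is not a direct summand of $N = \syz_R M$, which is assumed; and $\Ext^{1\le i\le t-1}_R(N,R)=0$, which is vacuous when $t=0$. Theorem~\ref{42} then yields $1 \le \ord(\tr_R(\syz_R M)) \le \ell\ell(R)-1 \le e(R)-\mu(\m)$, using that $\dim R = 0$ in the last inequality (and recalling, as in the proof of Theorem~\ref{42}, that this chain comes from \cite[Remark 1.5, Theorem 3.5(ii)]{kl}).

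Finally I would identify $\tr_R(\syz_R M)$ with $\I_1(\partial_1)$. Since $\Ext^1_R(M,R)=0$ by hypothesis, Lemma~\ref{43}(2) gives $\I_1(\partial_1) = \tr_R(\syz^1_R M) = \tr_R(\syz_R M)$ for the minimal free resolution $(F_i,\partial_i)$. Substituting this identification into the inequality from the previous step produces $1 \le \ord(\I_1(\partial_1)) \le \ell\ell(R)-1 \le e(R)-\mu(\m)$, as desired. For the ``in particular'' clause: if $R$ has minimal multiplicity then $e(R) = \mu(\m) - \dim R + 1 = \mu(\m)+1$ since $\dim R = 0$, so the outer terms of the chain both equal $1$, forcing equality throughout.

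I do not anticipate a serious obstacle here, since the argument is a direct specialization of the proofs of Theorem~\ref{42} and Corollary~\ref{62}; the only point requiring a moment's care is verifying $(\syz_R M)^* \neq 0$ in the Artinian setting, but this is immediate from $\depth R = 0$ (nonzero submodules of free modules remain ``visible'' under $\Hom_R(-,R)$) or, more slickly, from the combination of $\Ext^1_R(M,R)=0$, \ref{34}, and \cite[Proposition 1.2.10]{bh}. One should also double-check that $\ell\ell(R)$ is finite and $\ge 1$ here, which holds because $R$ is Artinian local and not a field need not be assumed — if $R$ is a field then $M$ is free, contradicting the hypothesis, so $\m \neq 0$ and $\ell\ell(R) \ge 1$.
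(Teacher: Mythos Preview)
Your proposal is correct and follows essentially the same approach as the paper's proof. The only cosmetic difference is that you invoke Theorem~\ref{42} applied to $\syz_R M$, whereas the paper inlines the Artinian ($t=0$) argument directly for $\syz_R M$ --- namely, observing that $\m^{\ell\ell(R)}=0$, so $\tr_R(\syz_R M)\subseteq \m^{\ell\ell(R)}$ would force $(\syz_R M)^*=0$ and hence $\syz_R M=0$ --- before appealing to Lemma~\ref{43}(2) exactly as you do.
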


\begin{proof} As $M$ is not free, so $\syz_R M\ne 0$.    If $\tr_R(\syz_R M)\subseteq \m^{\ell\ell(R)}=0$,  so $(\syz_R M)^*=0$, hence $\syz_R M=0$, contradiction! Thus $1\leq \ord\left(\tr_R(\syz_R M)\right)\le \ell\ell(R)-1\le e(R)-\mu(\m)$ (see \cite[Remark 1.5, Theorem 3.5(ii)]{kl}). Since $\Ext^1_R(M,R)=0$, so we get $\I_1(\partial_1)=\tr(\syz_R M)$ by Lemma \ref{43}(2). 
\end{proof}



The technical assumption of $R$ not being a summand of $\syz_R M$ in Corollary \ref{62} is satisfied when, for instance, $M$ is maximal Cohen--Macaulay (\cite[Lemma 2.2]{del}).

\begin{chunk}\label{nchu} Let $I$ be a non-principal ideal of a local ring $R$  containing a non-zerodivisor. Let $(F_i,\partial_i)_{i=0}^\infty$ be a  free resolution of $I$. Then it follows from  Proposition \ref{trentry} that $\ord(\I_1(\partial_1))\le \ord(\tr_R(I))$.  
\end{chunk}

\begin{cor} Let $R$ be a local Cohen--Macaulay ring of dimension $1$ with infinite residue field. Let $I\subseteq \m$ be a non-principal ideal of $R$ such that $I=\tr_R(I)$ contains a non-zerodivisor. Let $(F_i,\partial_i)_{i=0}^\infty$ be a minimal free resolution of $I$. Then $1\le \ord\left(\I_1(\partial_1)\right)\le \ell\ell(R)-1\le e(R)-\mu(\m)+1$. In particular, equality holds throughout when $R$ has minimal multiplicity. 
\end{cor}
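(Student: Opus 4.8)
The plan is to deduce this final corollary directly from the three ingredients already assembled in the excerpt: Proposition~\ref{her}, the trace-versus-entries inequality of Proposition~\ref{trentry} (as packaged in Remark~\ref{nchu}), and Lemma~\ref{43}(2). First I would record the easy lower bound: since $(F_i,\partial_i)$ is a minimal free resolution, Remark~\ref{ords} gives $\I_1(\partial_1)\subseteq\m$, hence $\ord(\I_1(\partial_1))\ge 1$.

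Next I would turn to the upper bound. Because $I=\tr_R(I)$ and $I$ contains a non-zerodivisor, Proposition~\ref{her} applies to $I$ and yields $1\le\ord(I)\le\ell\ell(R)-1\le e(R)-\mu(\m)+1$. So it suffices to compare $\ord(\I_1(\partial_1))$ with $\ord(\tr_R(I))=\ord(I)$. Here is where I would invoke Remark~\ref{nchu}: $I$ is non-principal and contains a non-zerodivisor, so Proposition~\ref{trentry} gives $\tr_R(I)\subseteq\I_1(\partial_1)$, and taking $\m$-adic orders reverses the inclusion to $\ord(\I_1(\partial_1))\le\ord(\tr_R(I))$. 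Combining, $1\le\ord(\I_1(\partial_1))\le\ord(I)\le\ell\ell(R)-1\le e(R)-\mu(\m)+1$.

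Actually, a cleaner route that also pins down the value exactly: since $I$ is a module of positive depth over the one-dimensional Cohen--Macaulay ring $R$, we have $\Ext^1_R(I,R)=0$ (as $I$ is maximal Cohen--Macaulay, being torsion-free over a one-dimensional CM ring with $I$ containing a nonzerodivisor, so $\depth I=1=\depth R$, whence $\Ext^{\ge 1}_R(I,R)$ need not vanish in general — so I would instead just use that $\I_1(\partial_1)=\tr_R(\syz_R I)$ only requires $\Ext^1_R(I,R)=0$, which may fail). To avoid this subtlety I prefer the argument via Proposition~\ref{trentry}: it directly gives $\tr_R(I)\subseteq\I_1(\partial_1)$ with no Ext hypothesis, which is exactly what Remark~\ref{nchu} packages. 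Then the chain of inequalities above finishes the general bound, and for the minimal multiplicity case, Proposition~\ref{her} already forces $\ell\ell(R)-1=e(R)-\mu(\m)+1=1$, so all terms equal $1$ and equality holds throughout.

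The only mild obstacle is making sure the hypotheses of Proposition~\ref{her} and Proposition~\ref{trentry} are genuinely in force — namely that $I$ is non-principal (given), contains a non-zerodivisor (given), and satisfies $I=\tr_R(I)$ (given) — and that ``$\ord$'' reverses inclusions, which is immediate from the definition $\ord(J)=\sup\{n: J\subseteq\m^n\}$. No new computation is needed; the corollary is a formal consequence of the cited results.

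\begin{proof} Since $(F_i,\partial_i)_{i=0}^\infty$ is a minimal free resolution, $\I_1(\partial_1)\subseteq\m$ by \ref{ords}, so $\ord(\I_1(\partial_1))\ge 1$. As $I$ is non-principal and contains a non-zero-divisor, Proposition \ref{trentry} gives $\tr_R(I)\subseteq \I_1(\partial_1)$, hence $\ord(\I_1(\partial_1))\le \ord(\tr_R(I))=\ord(I)$, using $I=\tr_R(I)$. Finally, Proposition \ref{her} applied to $I$ yields $1\le\ord(I)\le\ell\ell(R)-1\le e(R)-\mu(\m)+1$. Combining these, $1\le\ord(\I_1(\partial_1))\le\ell\ell(R)-1\le e(R)-\mu(\m)+1$. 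When $R$ has minimal multiplicity, Proposition \ref{her} forces $\ell\ell(R)-1=e(R)-\mu(\m)+1=1$, so all the displayed quantities equal $1$ and equality holds throughout.
\end{proof}
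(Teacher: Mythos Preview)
Your proof is correct and follows exactly the route the paper takes: the paper's own proof is the one-line ``Immediate from \ref{ords}, \ref{nchu} and Proposition \ref{her},'' and your argument simply unpacks these three references in the natural order. Your side discussion correctly identifies that the Lemma~\ref{43}(2) route would require $\Ext^1_R(I,R)=0$, which is not guaranteed here, and rightly avoids it.
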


\begin{proof} Immediate from \ref{ords}, \ref{nchu} and Proposition \ref{her}. 
\end{proof} 


\section{Some additional results on obstruction of containment of trace ideals}\label{sec7} 
In this section, all modules are assumed to be finitely generated. We will prove various results on obstruction of containment of ideals defining the singular locus of the ring in certain ideals of finite projective, or injective or $G$-dimension. Unlike the results in Section \ref{secmain},  we believe that none of the results in this section can be deduced from Theorem \ref{4}.

Before proceeding, we remark that $\sing(R)$ (respectively $\ng(R)$) will denote the collection of all $\p\in \spec(R)$ such that $R_{\p}$ is not regular (respectively, not Gorenstein). $\reg(R)$ will stand for $\spec(R)\setminus \sing(R)$.

\begin{lem}\label{new} Let $(R,\m)$ be local, and $M$ be a finitely generated $R$-module such that $\m M\neq 0$ and $\id_R \m M <\infty$. Then $R$ is regular.
\end{lem}

\begin{proof}  Follows from \cite[Example 3.4, Corollary 3.19]{burch}.   
\end{proof}

\begin{prop}\label{sin} Let $R$ be a local ring. Let $0\neq J$ be an ideal of $R$ such that $\V(J)\subseteq \sing(R)$. Let $I, L$ be ideals of $R$, where $I$ is a proper radical ideal. Let $n\ge 1$ be an integer such that $J\subseteq I^{(n)}L$.  Then  $\pd_R I^{(n)}L=\infty=\id_R I^{(n)}L$. 
\end{prop} 

\begin{proof} As $I$ is a proper radical ideal, hence $\Min(R/I)\neq \emptyset$ and $I=\cap_{\p \in \Min(R/I)}\p$. Thus $I^{(n)}=\cap_{\p \in \Min(R/I)}\p^{(n)}$, and $I^{(n)}R_{\p}=\p^nR_{\p}=\p^{(n)}R_{\p}$ for all $\p\in \Min(R/I)$.   

 By hypothesis, we have $J\subseteq I^{(n)}L\subseteq I^{(n)}\subseteq \p$ for all $\p\in \Min(R/I)$. Hence $\p\in \V(J)\subseteq \sing(R)$ for all $\p\in \Min(R/I)$. As  $J\neq 0$, hence $0\neq I^{(n)}L$. Now if possible let, $\pd_R I^{(n)}L<\infty$, i.e., $\pd_R R/(I^{(n)}L)<\infty$. By \cite[Proposition 6.2]{Ab} we get that, $I^{(n)}L$ contains a non-zero-divisor. Since localization commutes with product, hence $\p^nLR_{\p}=I^{(n)}LR_{\p}\neq 0$  for all $\p\in \Min(R/I)$. Now $\pd_R I^{(n)}L<\infty$ also implies $\pd_{R_{\p}} I^{(n)}LR_{\p}=\pd_{R_{\p}} \p^nLR_{\p}<\infty$ for all $\p\in \Min(R/I)$, hence $R_{\p}$ is regular by \cite[Theorem 1.1]{lv}. This finally contradicts $\p\in \sing(R)$, hence forcing $\pd_R I^{(n)}L=\infty$.

 The proof for $\id_R I^{(n)}L=\infty$ is similar by using Lemma \ref{38} in place of \cite[Proposition 6.2]{Ab}, and Lemma \ref{new} in place of \cite[Theorem 1.1]{lv}.
\end{proof}

For an $R$-module $M$, we put $\ipd_R(M):=\{\p \in \spec(R)| \pd_{R_{\p}} M_{\p}=\infty\}$ and  $\iid_R(M):=\{\p \in \spec(R)| \id_{R_{\p}} M_{\p}=\infty\}$. 

\begin{prop}\label{intg} Let $R$ be a ring. Let $ J$ be an ideal of $R$ such that $\V(J)\subseteq \sing(R)$. Let  $I$ be an integrally closed proper ideal of $R$ such that $J\subseteq I$.  Then $\Min(R/I)\subseteq \ipd_R(I) \cap \iid_R(I)$. In particular,  $\pd_R I=\infty=\id_R(I)$.  
\end{prop}  

\begin{proof} As $I$ is a proper ideal, hence $\Min(R/I)\neq \emptyset$. By hypothesis, we have $J\subseteq I\subseteq \p$ for all $\p\in \Min(R/I)$. Hence, $\p\in \V(J)\subseteq \sing(R)$ for all $\p\in \Min(R/I)$.  Note that, $I_{\p}$ is $\p R_{\p}$-primary and integrally closed (\cite[Proposition 1.1.4]{sh}) for every $\p\in \Min(R/I)$. Now $\pd_{R_{\p}} I_{\p}=\infty=\id_{R_{\p}}I_{\p}$ follows from \cite[Corollary 6.12]{rigid}. 
\end{proof}

\begin{lem}\label{rfd} Let $M$ be an $R$-module such that $\reg(R)\subseteq \supp(M)$ and $\depth_{R_{\p}} M_{\p} \ge \depth R_{\p} $ for all $\p \in \reg(R)$. Then $\V(\tr_R M)\subseteq \sing(R)$.
\end{lem}

\begin{proof} If $\p \notin \sing(R)$, then $R_{\p}$ is regular and $M_{\p}\neq 0$. Then $\depth_{R_{\p}} M_{\p} \ge \depth R_{\p} $  implies $0\neq M_{\p}$ is $R_{\p}$-free. Hence $R_{\p}=\tr_{R_{\p}} (M_{\p})=(\tr_R M)_{\p}$ (\cite[Proposition 2.8(ii),(viii)]{lindo}). Thus $\tr_R M \nsubseteq \p$ , i.e., $\p \notin \V(\tr_R M)$.   
\end{proof}

Using Proposition \ref{sin}, Proposition \ref{intg} and Lemma \ref{rfd}, we obtain another result on obstruction of containment of trace ideals in certain ideals of finite homological dimension.

\begin{thm}\label{75} Let $M$ be an $R$-module of full support and $\depth_{R_{\p}} M_{\p} \ge \depth R_{\p} $ for all $\p \in \reg(R)$. Let $I, L$ be ideals of $R$, where $I$ is a proper  ideal. Then the following hold true:

\begin{enumerate}[\rm(1)]

    \item If $R$ is local, $I$ is a radical ideal, $n\ge 1$ is an integer such that $\tr_R (M)\subseteq I^{(n)}L$, then $\pd_R I^{(n)}L=\infty=\id_R I^{(n)}L$.

    \item If $I$ is integrally closed and $\tr_R (M)\subseteq I$, then $\Min(R/I)\subseteq \ipd_R(I)\cap \iid_R(I)$. In particular, $\pd_R I=\infty=\id_R(I)$.    
\end{enumerate}
    
\end{thm}

\begin{proof} 

(1) As $M$ has full support, hence $\Ass(M^*)=\supp(M)\cap \Ass(R)\neq \emptyset$. So, $M^*\neq 0$, hence $\tr_R(M)\neq 0$. Now we are done by Lemma \ref{rfd} and Proposition \ref{sin}.   

(2)  Immediate from Proposition \ref{intg} and Lemma \ref{rfd}. 
\end{proof}

Next, we aim to prove certain G-dimension versions of Proposition \ref{sin} and \ref{intg}. We refer the reader to \cite{ch} for basics of G-dimension. 

For an $R$-module $M$, we set $\igd_R(M):=\{\p \in \spec(R)| \gdim_{R_{\p}} M_{\p}=\infty\}$.

\begin{prop}\label{intgor} Let $R$ be a ring. Let $ J$ be an ideal of $R$ such that $\V(J)\subseteq \ng(R)$. Let  $I$ be an integrally closed proper ideal of $R$ such that $J\subseteq I$.  Then $\Min(R/I)\subseteq \igd_R(I)$. In particular, $\gdim_R I=\infty$.   
\end{prop}  

\begin{proof} As $I$ is a proper ideal, hence $\Min(R/I)\neq \emptyset$. By hypothesis, we have $J\subseteq I\subseteq \p$ for all $\p\in \Min(R/I)$. Hence $\p\in \V(J)\subseteq \ng(R)$ for all $\p\in \Min(R/I)$.  Note that, $I_{\p}$ is $\p R_{\p}$-primary and integrally closed (\cite[Proposition 1.1.4]{sh}) for every $\p\in \Min(R/I)$. Now $\gdim_{R_{\p}} I_{\p}=\infty$ follows from \cite[Theorem 1.1]{cw}. 
\end{proof}

\begin{lem}\label{gd} Let $(R,\m)$ be local Cohen--Macaulay, and $M$ be a finitely generated $R$-module such that $\m M\neq 0$ and $\gdim_R \m M <\infty$. Then $R$ is Gorenstein. 
\end{lem}

\begin{proof} We may pass to completion and assume $R$ is complete, with canonical module $\omega_R$. Then $\Tor^R_{\gg 0}(\m M, \omega_R)=0$ by \cite[Theorem 5.2.14]{ch}. Now we are done by \cite[Example 3.4, Proposition 3.16]{burch}. 
\end{proof}

\begin{prop}\label{ngr} Let $R$ be a local Cohen--Macaulay ring. Let $ J$ be an ideal of $R$ containing a non-zero-divisor such that $\V(J)\subseteq \ng(R)$. Let $I, L$ be ideals of $R$, where $I$ is a proper radical ideal. Let $n\ge 1$ be an integer such that $J\subseteq I^{(n)}L$.  Then  $\gdim_R I^{(n)}L=\infty$. 
\end{prop} 

\begin{proof} As $I$ is a proper radical ideal, hence $\Min(R/I)\neq \emptyset$ and $I=\cap_{\p \in \Min(R/I)}\p$. Thus $I^{(n)}=\cap_{\p \in \Min(R/I)}\p^{(n)}$, and $I^{(n)}R_{\p}=\p^nR_{\p}=\p^{(n)}R_{\p}$ for all $\p\in \Min(R/I)$.   

 By hypothesis, we have $J\subseteq I^{(n)}L\subseteq I^{(n)}\subseteq \p$ for all $\p\in \Min(R/I)$. Hence $\p\in \V(J)\subseteq \ng(R)$ for all $\p\in \Min(R/I)$. As  $J\neq 0$, hence $0\neq I^{(n)}L$. Now if possible let, $\gdim_R I^{(n)}L<\infty$. As $J$ contains a non-zero-divisor, so does $I^{(n)}L$. Since localization commutes with product, hence,  $\p^nLR_{\p}=I^{(n)}LR_{\p}\neq 0$  for all $\p\in \Min(R/I)$. Now $\gdim_R I^{(n)}L<\infty$ also implies $\gdim_{R_{\p}} I^{(n)}LR_{\p}=\gdim_{R_{\p}} \p^nLR_{\p}<\infty$ for $\p\in \Min(R/I)$. Hence $R_{\p}$ is Gorenstein by Lemma \ref{gd}. This finally contradicts $\p\in \ng(R)$, hence forcing $\gdim_R I^{(n)}L=\infty$.


\end{proof}

\begin{thm} Let $R$ be a local Cohen--Macaulay ring admitting a canonical ideal $\omega_R$. Let $I, L$ be ideals of $R$, where $I$ is a proper  ideal. Then the following hold true:

\begin{enumerate}[\rm(1)]

    \item If  $I$ is a radical ideal, $n\ge 1$ is an integer such that $\tr_R (\omega_R)\subseteq I^{(n)}L$, then $\gdim_R I^{(n)}L=\infty$.

    \item If $I$ is integrally closed and $\tr_R (\omega_R)\subseteq I$, then $\Min(R/I)\subseteq \igd_R(I)$. In particular, $\gdim_R I=\infty$.     
\end{enumerate}
    
\end{thm}

\begin{proof} We first recall that $\V(\tr(\omega_R))=\ng(R)$ (\cite[Lemma 2.1]{he}). (1) now follows from this and Proposition \ref{ngr}. (2) follows from this and Proposition \ref{intgor}.
    
\end{proof}


\begin{thebibliography}{999}
 \bibitem{Ab} M. Auslander, D. A. Buchsbaum, Codimension and Multiplicity, Annals of Mathematics, Second Series, 68 (1958), 625--657.

\bibitem{bh} W. Bruns, J. Herzog, Cohen–-Macaulay Rings, Revised Edition, Cambridge Studies in Advanced Mathematics, Cambridge University Press, Cambridge 39 (1998).

\bibitem{dck} O. Celikbas, S. Dey, T. Kobayashi, On the projective dimension of tensor products of modules, arXiv:2304.04490. 

\bibitem{rigid} O. Celikbas, M. Gheibi, A. Sadeghi and M.R. Zargar, Homological dimensions of rigid modules, Kyoto J. Math., 58 (2018), 639--669.

\bibitem{cw}  O. Celikbas, S. Sather-Wagstaff, Testing for the Gorenstein property, Collect. Math., 67, 555--568. 

\bibitem{ch} L. W. Christensen, Gorenstein dimensions, Lecture Notes in Math. 1747, Springer-Verlag, New York, 2000.

\bibitem{chiy} L. W. Christensen, S. B. Iyengar, T. Marley, Rigidity of Ext and Tor with coefficients in residue fields of a commutative Noetherian ring, Proc. Edinb. Math. Soc., 62 (2019), 305--321.

\bibitem{blms} H. Dao, O. Iyama, S.B. Iyengar, R. Takahashi, M. Wemyss, Y. Yoshino, Noncommutative resolutions using syzygies, Bull. Lond. Math. Soc., 51 (2019), 43--48. 

\bibitem{burch}  S. Dey, T. Kobayashi, Vanishing of (co)homology of Burch and related submodules. Illinois J. Math.,
67(2023), 101--151.

\bibitem{diet}  G. D. Dietz, A characterization of closure operations that induce big CohenMacaulay modules, Proc. Amer. Math. Soc. 138 (2010), no. 11, 3849--3862.

\bibitem{gsi}
S. Goto, R. Isobe, S. Kumashiro, Correspondence between trace ideals and birational extensions with application to the analysis of the Gorenstein property of rings, Journal of Pure and Applied Algebra, 224 (2020), 747--767.

\bibitem{he}
J. Herzog, T. Hibi, D. I. Stamate, The trace of the canonical module, Israel Journal of Mathematics, 233 (2019), 133--165.

\bibitem{hr} 
J. Herzog, M. Rahimbeigi, On the set of trace ideals of a Noetherian ring, Beitr Algebra Geom, 64 (2023), 41--54.

\bibitem{hoc} M. Hochster, Solid closure, Commutative Algebra: Syzygies, Multiplicities and Birational Algebra, Contemp. Math. 159, Amer. Math. Soc., Providence, R. I., 1994, 103-172. 

 \bibitem{hoclec} M. Hochster, Foundations of tight closure theory, \url{https://dept.math.lsa.umich.edu/~hochster/711F07/fndtc.pdf}, 2007. 

\bibitem{holm} H. Holm, The structure of balanced big Cohen-Macaulay modules over Cohen-Macaulay rings, Glasg. Math. J. 59, No. 3, 549--561.

\bibitem{sh}
C. Huneke, I. Swanson, Integral Closure of Ideals, Rings, and Modules.  London Mathematical Society Lecture Note Series, Cambridge University Press, Cambridge, MA 336 (2006).

\bibitem{bcm} S. B. Iyengar, L. Ma, K. Schwede, M. E. Walker,  Maximal Cohen--Macaulay complexes and their uses: a partial survey, Commutative algebra, Springer, Cham, (2021), 475--500.

\bibitem{tk}
T. Kobayashi, R. Takahashi, Rings whose ideals are isomorphic to trace ideals, Mathematische Nachrichten, 292 (2019), 2252--2261.

\bibitem{del}
{\sc T. Kobayashi}, On delta invariants and indices of ideals, J. Math. Soc. Japan 71 (2019), 589--597.

\bibitem{kl} 
J. Koh, K. Lee, New Invariants of Noetherian Local Rings, Journal of Algebra, 235 (2001), 431--452.

\bibitem{med} S. Kumashiro, When are Trace Ideals Finite?. Mediterr. J. Math. 20, 278 (2023). https://doi.org/10.1007/s00009-023-02481-4.  

\bibitem{lv} G. Levin and W. V. Vasconcelos. Homological dimensions and Macaulay rings. Pacific J. Math., 25 (1968), 315--323.

\bibitem{lw} G. J. Leuschke, R. Wiegand, Cohen–-Macaulay Representations, Mathematical Surveys and Monographs, Amer. Math. Soc., Providence, RI 181 (2012).

\bibitem{lindo}
H. Lindo, Trace ideals and centers of endomorphism rings of modules over commutative rings, Journal of Algebra 482 (2017), 102--130. 

\bibitem{mas}  V. Masiek, Gorenstein dimension and torsion of modules over commutative Noetherian rings, Communications in Algebra, 28 (2000), 5783--5811.  



\bibitem{pr} F. P\'erez, Rebecca R.G., Characteristic-free test ideals, Trans. Amer. Math. Soc., Series B, 8 (2021), 754--787. 

\bibitem{rrg}  Rebecca R. G., Closure operations that induce big Cohen-Macaulay modules and classification of singularities, J. Algebra 467 (2016), 237--267.   


\bibitem{gs} J. C. Rosales, P.A. Garcia-Sanchez, Numerical Semigroups, New York: Springer (2009), ISBN 978-1-4419-0160-6.

\bibitem{rotman} J. J. Rotman, An introduction to homological algebra, 2nd ed., Universitext, Springer, New York, 2009.

\bibitem{sch} P. Schenzel, A criterion for completeness, Proc. Amer. Math. Soc.143(2015), no.6, 2387--2394.

\bibitem{ss} P. Schenzel, A.-M. Simon, Completion, Čech and Local Homology and Cohomology: Interactions Between Them, Springer Monogr. Math. (2018).

\bibitem{smith}
K. E. Smith, Test ideals in local rings, Trans. Amer. Math. Soc., 347 (1995), 3453--3472.

    

\end{thebibliography}
\end{document}